\documentclass[11pt, a4paper]{article}
\usepackage{amsmath,amssymb,amsthm}
\usepackage{mathtools}
\usepackage{amsfonts}
\usepackage{physics}
\usepackage{faktor}

\usepackage[T1]{fontenc}
\usepackage[utf8]{inputenc}
\usepackage[english]{babel}
\usepackage{kpfonts}
\usepackage{microtype}
\usepackage[autostyle]{csquotes}

\usepackage[style=alphabetic,url=false, doi=false, date=year, backend=bibtex,maxbibnames=10, firstinits=true]{biblatex}
\AtEveryBibitem{\clearfield{issn}}
\AtEveryCitekey{\clearfield{issn}}
\AtEveryBibitem{\clearfield{note}}
\AtEveryCitekey{\clearfield{note}}
\usepackage{enumitem}
\usepackage{hyperref}

\usepackage{tabularx}

\usepackage[algoruled]{algorithm2e}
\usepackage{cleveref}
\usepackage{xcolor}
\usepackage{graphicx}

\usepackage[margin=2.25cm]{geometry}
\usepackage{comment}

\addbibresource{POEMA.bib}
\addbibresource{Books.bib}
\addbibresource{algebraic.bib}

\theoremstyle{plain}
\newtheorem{theoremintro}{Theorem}

\newtheorem{theorem}{Theorem}[section]
\newtheorem{lemma}[theorem]{Lemma}
\newtheorem{prop}[theorem]{Proposition}
\newtheorem{corollary}[theorem]{Corollary}

\theoremstyle{definition}
\newtheorem{definition}[theorem]{Definition}
\newtheorem{example}[theorem]{Example}

\theoremstyle{remark}
\newtheorem*{remark}{Remark}

\DeclarePairedDelimiter{\ceil}{\lceil}{\rceil}
\DeclarePairedDelimiter{\floor}{\lfloor}{\rfloor}

\newcommand{\N}{\mathbb N}

\newcommand{\C}{\mathbb C}

\newcommand{\R}{\mathbb R}

\renewcommand{\epsilon}{\varepsilon}

\newcommand{\cC}{\mathcal{C}}
\newcommand{\cI}{\mathcal{I}}

\newcommand{\cS}{\mathcal{S}}
\newcommand{\cO}{\mathcal{T}}
\newcommand{\cQ}{\mathcal{Q}}

\newcommand{\cL}{\mathcal{L}}
\newcommand{\cLone}{\mathcal{L}^{(1)}}
\newcommand{\cV}{\mathcal{V}}
\renewcommand{\cS}{\mathcal{S}}
\newcommand{\cM}{\mathcal{M}}
\newcommand{\cMone}{\mathcal{M}^{(1)}}

\newcommand*{\tigen}[2]{\langle {{#1}} \rangle_{{#2}}}
\newcommand*{\tqgen}[2]{\cQ_{{#2}}({\vb{#1}})}
\newcommand*{\ord}[1]{\Pi{#1}}

\newcommand{\intdeg}{\theta}

\DeclareMathOperator{\hess}{Hess}
\DeclareMathOperator{\cP}{Pos}
\DeclareMathOperator{\pos}{Pos}

\DeclareMathOperator{\cone}{cone}
\DeclareMathOperator{\conv}{conv}
\DeclareMathOperator{\ann}{Ann}

\DeclareMathOperator{\supp}{supp}
\def \sos {\mathrm{SoS}}
\def \mom {\mathrm{MoM}}

\def \eval {\mathbf{e}}

\setlist[enumerate,1]{label={(\roman*)},ref={\thetheorem (\roman*)}}
\renewcommand{\braket}[2]{\langle #1 , #2 \rangle}
\renewbibmacro{in:}{}
\begin{document}
\title{Exact Moment Representation in Polynomial Optimization}
\author{Lorenzo Baldi, Bernard Mourrain\\
  \ \\
{Inria d'Universit\'e C\^ote d'Azur, Sophia Antipolis, France}
}
\date{}

\maketitle
\begin{abstract}
  We investigate the problem of representing moment sequences by measures in the context of Polynomial Optimization Problems, that consist in finding  the infimum of a real polynomial on a real semialgebraic set defined by polynomial inequalities. 
  We analyze the exactness of Moment Matrix (MoM) hierarchies,
  dual to the Sum of Squares (SoS) hierarchies, which are sequences of convex cones introduced by Lasserre to approximate measures and positive polynomials.
  We investigate in particular flat truncation properties, which 
  allow testing effectively when MoM exactness holds and recovering the minimizers.
  
  We show that the dual of the MoM hierarchy coincides with 
  the SoS hierarchy extended with the real radical of the support of the defining quadratic module $Q$. 
  We deduce 
  that flat truncation happens if and only if the support of the quadratic module associated with the minimizers is of dimension zero. We also bound the order of the hierarchy at which flat truncation holds.
  
  As corollaries, we show that flat truncation and MoM exactness hold
when regularity conditions, known as Boundary Hessian Conditions, hold (and thus that MoM exactness holds generically);
and when the support of the quadratic module $Q$ is zero-dimensional.
Effective numerical computations illustrate these flat truncation properties.
\end{abstract}

\section{Introduction}

Let $f, g_1,\dots,g_s \in \R[X_1,\dots,X_n]$ be polynomials in the indeterminates $X_{1}, \ldots, X_{n}$ with real coefficients. The goal of Polynomial Optimization is to find:
\begin{equation}\label{eq:pop}
      f^* \coloneqq \inf \ \big\{ \, f(x)\in \R \mid x \in \R^n, \ g_i(x) \ge 0 \ \textup{ for } i=1, \ldots,s \,\big\},
    \end{equation}
    that is the infimum $f^{*}$
of the \emph{objective function} $f$ on the \emph{basic, {closed} semialgebraic set} $S \coloneqq \{ \, x \in \R^n \mid \ g_i(x) \ge 0 \ \textup{ for } i=1, \ldots,s \, \} $.
This is a general problem, which appears in many contexts and with many applications. To cite a few of them: in graph theory \cite{https://doi.org/10.48550/arxiv.2103.01574}, network optimization design \cite{6980142}, control \cite{6606873}: see \cite{lasserre_moments_2010} for a more comprehensive list. {Equality constraints are also allowed in this setting, since $g(x) = 0$ if and only if $g(x)\ge 0$ and $-g(x) \ge 0$. We can then consider optimization problems on real algebraic varieties, i.e. common zero loci of finitely many real polynomials.}
{Moreover, many famous NP-hard and NP-complete problems can be rephrased as polynomial optimization problems, see e.g. \cite[Sec.~1.1]{laurent_sums_2009}}.

To (approximately) solve such problems, Lasserre \cite{lasserre_global_2001} proposed to use two hierarchies of finite dimensional convex cones depending on an order $d\in \N$, and he proved, for {\emph{Archimedean quadratic modules}}, the convergence of the optima associated to these hierarchies to the minimum $f^{*}$ of $f$ on $S$, when $d\rightarrow \infty$.
The first hierarchy replaces non-negative polynomials by Sums of Squares (SoS) and non-negative polynomials on $S$ by polynomials of degree $\le d$ in the truncated quadratic module $\cQ_{2d}(\vb g)$ generated by the tuple of polynomials $\vb g=\{ \, g_{1},\ldots, g_{s} \, \}$.

The second and dual hierarchy replaces positive measures by linear functionals $\cL_{2d}(\vb g)$ which are non-negative on the polynomials of the truncated quadratic module $\cQ_{2d}(\vb g)$.
We will describe {these constructions} more precisely in section \ref{subsec::poly_opt}.

This approach has many interesting properties (see e.g. \cite{lasserre_introduction_2015, laurent_sums_2009, marshall_positive_2008}).
It was proposed with the aim to recover the infimum $f^{*}$, and if this infimum is reached, the set of minimizers $S^{\min}:=\{\xi \in S \mid f(\xi)=f^{*}\}$. The extraction of minimizers is strongly connected to the so called \emph{flat truncation} property, that will be the focus of the paper.

To tackle these challenges, one can first address the finite convergence problem, that is when the value $f^{*}$ can be obtained at a given order of the relaxation(s). The second problem is the exactness of the hierarchies: we call the Sum of Squares (SoS) hierarchy exact when the non-negative polynomial $f-f^{*}$ belongs to the truncated quadratic module $\cQ_{2d}(\vb g)$ for some $d\in \N$; and we call the Moment Matrix (MoM) hierarchy exact when, for {all $d\in \N$ big enough} , any optimal linear functional $\lambda^{*}\in \cL_{2d}(\vb g)$ is {represented by} a positive measure supported on $S$. We are going to investigate in detail this {exactness property of the MoM hierarchy}.

Several works have been developed over the last decades to address SoS representation problems.
\cite{parrilo_explicit_2002} {observed} that if the complex variety $\cV_{\C}(I)$ defined by an ideal $I = (\vb h)$ generated by real polynomials is finite and $I$ is radical, then $f-f^{*}$ has a representation as a sum of squares modulo $I$.
\cite{laurent_semidefinite_2007} showed the finite convergence property if the complex variety $\cV_{\C}(I)$ is finite, and {that} the truncated moment sequence {at some level of the hierarchy} has a representing measure, if moreover the ideal $I$ is radical.
\cite{nie_polynomial_2013} showed that if the semialgebraic set $S$ is finite, then the finite convergence property holds for a preordering defining $S$.

\cite{scheiderer_distinguished_2005}
proved that $f-f^{*}$ is in the quadratic module $\cQ$ defining $S$ modulo {the ideal} $(f-f^{*})^{2}$ if and only if $f-f^{*} \in \cQ$, and then the SoS hierarchy is exact.
\cite{MarshallRepresentationsnonnegativepolynomials2006}, \cite{MarshallRepresentationsNonNegativePolynomials2009}
proved that, under regularity conditions at the minimizers, known as {\emph{Boundary Hessian Conditions}} (BHC), $f-f^{*}$ is in the quadratic module, {i.e.,} the SoS exactness property holds.
\cite{nie_minimizing_2006}, \cite{demmel_representations_2007} showed that, by adding gradient constraints when $S=\R^{n}$ or KKT constraints when $S$ is a general basic semialgebraic set,
the SoS exactness property holds when the corresponding Jacobian ideal is radical.
\cite{nie_exact_2013} showed that, by adding the Jacobian constraints,
the finite convergence property holds under some regularity assumption.
\cite{nie_certifying_2013} showed that finite convergence and the flat truncation property are equivalent under generic assumptions, if the SoS hierarchy is exact and strong duality holds.
In \cite{nie_optimality_2014}, it is shown that BHC imply finite convergence and that BHC are generic.
\cite{kriel_exactness_2019} showed the SoS exactness property if the quadratic module defining $S$ is Archimedian and some strict concavity properties of $f$ at the finite minimizers are satisfied.

Though many works focused on the SoS hierarchy and the representation of positive polynomials
with Sums of Squares, the MoM hierarchy has been much less studied. We mention \cite{lasserre_semidefinite_2008} and \cite{lasserre_moment_2013}, which prove that if $S$ is finite, the value $f^{*}$, the minimizers and the vanishing ideal of $S$ can be recovered from moment matrices associated with the truncated preordering defining $S$.

{From a methodological and practical point of view, flat truncation tests on moment matrices, see e.g. \cite{CurtoFlatExtensionsPositive1998} and \cite{Laurentgeneralizedflatextension2009}, are a way to decide finite convergence, i.e. whether the minimum $f^*$ is reached at some order of the hierarchy (another approach is the comparison of the lower bound obtained with an objective value at a local minimizer).}
But flat truncation also implies MoM exactness. 
Moreover, it allows extracting the 
finite minimizers from moment matrices
(see \cite{Henrion05detectingglobal}, \cite{mourrain_polynomialexponential_2018}), whereas SoS exactness does not yield the minimizers.
Therefore a natural question, of theoretical and practical importance, is:
\begin{quote}\it
When does flat truncation hold in a Polynomial Optimization Problem? 
\end{quote}

It is known that truncated minimizing {pseudo-moment} sequences are not always {represented by} measures, see \Cref{appendix}, and thus flat truncation does not hold in general.
{But surprisingly, no {algebro}-geometric characterization of when flat truncation holds has been described in the last decades.}

\textbf{Contributions}. { Our main contribution is a complete characterization of flat truncation in terms of the zero-dimensionality of a natural ideal defining the minimizers of the Polynomial Optimization Problem. Furthermore, we show that flat truncation for \emph{generic} minimizing linear functionals implies the flat truncation property for all minimizing linear functionals, and the exactness property. 
\begin{theoremintro}[{see \Cref{thm::flat_implies_exact,thm::flat_iff}}]\label{thm:A}
    Assume that we have MoM finite convergence. Then $\dim \frac{\R[\vb X]}{\supp (\cQ(\vb g) + (f-f^*))} = 0$ if and only if there exists $d$ such that a generic $\lambda^* \in \cL_{2d}^{\min}(f; \vb g)$ has flat truncation. 
    
    In this case, all $\lambda \in \cL_{2d}^{\min}(f; \vb g)$ have flat truncation, and the MoM hierarchy is exact.
\end{theoremintro}
Above, $\supp (\cQ(\vb g) + (f-f^*)) := (\cQ(\vb g) + (f-f^*)) \cap - (\cQ(\vb g) + (f-f^*))$ is an ideal, associated with the quadratic module $\cQ(\vb g) + (f-f^*)$ which defines the minimizers $S^{\min}$; $\cL_{2d}^{\min}(f; \vb g)$ is the face of the minimizers of the MoM relaxation of order $d$; and linear functionals in the relative interior of $\cL_{2d}^{\min}(f; \vb g)$ are generic (see \Cref{def::generic_t} for a precise definition). 

\Cref{thm:A} easily allows to conclude that flat truncation occurs when regularity conditions, known as \emph{Boundary Hessian Conditions} (BHC) hold true, see \Cref{def:BHC}. This conditions hold for generic $\vb g$ and $f$. 

\begin{theoremintro}[{see \Cref{thm::BHC} and \Cref{cor::generic_exactness}}]\label{thm:B}
    Let $f \in \R[\vb{X}]$, $Q=\cQ(\vb g)$ be an Archimedean finitely
    generated quadratic module and assume that the BHC hold at every
    minimizer of $f$ on $S=\cS(\vb g)$.
    Then the SoS hierarchy is exact, the MoM hierarchy is exact, and the flat truncation holds for {all} $\lambda \in \cL_{2d}^{\min}(f;\vb g)$ when $d$ is big enough.
    Moreover, this condition occurs for generic $f$ and $\vb g$ satisfying the Archimedean condition.
\end{theoremintro}

\Cref{thm:B} extends the results on finite convergence, SoS exactness and flat truncation proved in \cite{MarshallRepresentationsnonnegativepolynomials2006}, \cite{MarshallRepresentationsNonNegativePolynomials2009},
\cite{nie_certifying_2013} and \cite{nie_optimality_2014}. A detailed comparison with these references is technical and therefore developed to \Cref{sec::flat_tru}.

Another consequence of \Cref{thm::flat_iff}, shown in  \Cref{th::finvar_exact_qua}, is that when the set $S$ is finite, flat truncation holds if the quotient by the support of the quadratic module $\cQ$ is of dimension zero.
This generalizes results of \cite{lasserre_semidefinite_2008} and \cite{lasserre_moment_2013} on semidefinite moment representations on finite sets.

To prove these results, we investigate in detail the properties of truncated moment relaxations and their duals, i.e. properties of truncated quadratic modules $\tqgen{g}{d}$ and positive linear functionals $\cL_{d}(\vb g)$. These properties are summarized in \Cref{thm:C}.
\begin{theoremintro}[{see \Cref{thm::generic_t,prop:flat truncation,thm::fin_set_qua}}]\label{thm:C}
    If $d$ is big enough, then the kernel of the (truncated) moment matrix of a generic $\lambda^* \in \cL_{d}(\vb g)$ generates the real radical of $\supp \cQ(\vb g)$.
    Furthermore, if $\lambda^*$ has flat truncation, then $\supp \cQ(\vb g)$ is zero-dimensional, $\cS(\vb g)$ is finite and the flat truncation degree is at least the interpolation degree of $\cS(\vb g)$.
    
    Conversely, if $\supp \cQ(\vb g)$ is zero-dimensional and $d$ is big enough, then all $\lambda \in \cL_d(\vb g)$ have the flat truncation property.
\end{theoremintro}

When the quotient by the support of $Q$ is of dimension zero, \Cref{thm:C} shows that (truncated) linear functionals in $\cL_{d}(\vb g)$ {are all represented by} measures supported on $S$, or in other words they are {represented by convex combinations of} evaluations at the points of $S$. This is therefore a particular solution of the moment problem, in the spirit of Curto-Fialkow's flat extension \cite{CurtoFlatExtensionsPositive1998}. A detailed comparison with related results in the zero-dimensional case, such as \cite{lasserre_semidefinite_2008} and \cite{lasserre_moment_2013}, is performed in \Cref{sec:Geometry}. The characterization of the ideal generated by the kernel of the moment matrix for generic linear functionals is new in the positive dimensional case.
}

\textbf{Outline}. The paper is structured as follows. In the next section of the introduction, we define the algebraic objects that we will use and recall their main properties. In \Cref{sec:examples}, we
describe in detail the notions of finite convergence and exactness for the Sum of Squares (SoS) and Moment Matrix (MoM) hierarchies. We present several examples showing how these notions are related (these examples are detailed in \Cref{appendix}).

In \Cref{sec:Geometry}, we investigate the properties of truncated moment sequences (\Cref{sec:truncatedmomrep}), 
of their annihilators (\Cref{ref:annihilator})
and we analyze when flat truncation holds and relate it with the interpolation degree (\Cref{subsec::interpolation_bases}).

In \Cref{sec::flat_tru}, we apply these results to Polynomial Optimization Problems (POPs).
In \Cref{sec:flat truncation degree}, we prove necessary and sufficients conditions for flat truncation and analyze at which degree flat truncation holds and yields the minimizers. 
We prove that exactness and flat truncation hold for POPs satisfying the Boundary Hessian Conditions (\Cref{sec::BHC}), {and under a zero-dimensionality assumption, which implies that the underlying semialgebraic set is finite (\Cref{sec:finite set})}.

For the numerical computations performed on the examples, which illustrate these developments, we use the Julia package
\texttt{{MomentPolynomialOpt.jl}}\footnote{https://github.com/AlgebraicGeometricModeling/MomentPolynomialOpt.jl}
with the SDP solvers \texttt{Mosek} and \texttt{SDPA}, based on
interior point methods.

\subsection{Notation}
We {recall} here basic notation and definitions we need hereafter, {which can be found e.g. in the textbooks \cite{Cox2015,lasserre_moments_2010,marshall_positive_2008}}.

 If $A$ is a subset of a $\R$-vector space $V$, we denote $\cone(A)$ the convex cone generated by $A$, by $\conv(A)$ its convex hull and by $\langle A \rangle$ its linear span. 

\vspace{-0.3cm}\paragraph{Polynomials.} Let $\R[\vb{X}] \coloneqq \R[X_1,\dots,X_n]$ be the $\R$-algebra of polynomials in $n$ indeterminates $X_{1}, \ldots, X_{n}$. 
We denote $(h_1,\dots,h_r)\subset \R[\vb{X}]$ the \emph{ideal} generated by $h_1,\dots,h_r\in \R[\vb{X}]$. 

If $A \subset \R[\vb{X}]$ and $t\in \N$, $A_t\coloneqq \{ \,f \in A \mid \deg f \le t \, \}$. In particular $\R[\vb{X}]_t$ is the vector space of polynomials of degree $\le t$.

Given a finite {set} of polynomials $\vb g = \{ \, g_1,\dots,g_r \, \}$, we define $\ord{\vb g} \coloneqq \{ \prod_{j \in J} g_j \colon {\emptyset \neq} J \subset \{ 1,\dots , r \} \} = \{ g_1, \dots, g_r,\, g_1 g_2, g_1 g_3,  \dots, g_1 \cdots g_r\} $, the set of all the products of the $g_i$'s, and $\pm \vb g \coloneqq \{ g_1, -g_1, \dots, g_r, -g_r \}$. 

If $A \subset \R[\vb{X}]$ we define $\cS(A)\coloneqq \big\{ \, x \in \R^n \mid f(x) \ge 0 \ \forall f \in A \, \big\}$. In particular we denote $\cS(\vb g) = \big\{ \, x \in \R^n \mid g(x) \ge 0 \ \forall g \in \vb g \, \big\}$ the \emph{basic, {closed} semialgebraic set defined by $\vb g$}. If $S \subset \R^n$,
we denote $\pos(S)=\{f \in \R[\vb X] \colon {f(x)\ge 0 \ \forall x\in S} \}$ the {convex} cone of nonnegative polynomials on $S$.

Let $\Sigma^2=\Sigma^2 [\vb X] \coloneqq \big\{ \, f \in \R[\vb{X}] \mid \exists r \in \N, \ h_i \in \R[\vb{X}] \colon f = h_1^2 + \dots + h_r^2   \,\big\}$ be the convex cone of \emph{Sum of Squares polynomials} (SoS).
$Q \subset \R[\vb{X}]$ is called \emph{quadratic module} if $1\in Q$, $\Sigma^2\cdot Q\subset Q$ and $Q+Q\subset Q$. If in addition $Q \cdot Q \subset Q$, $Q$ is a \emph{preordering}. For a {quadratic module} $Q \subset \R[\vb X]$, we define $\supp Q \coloneqq Q \cap -Q$. $\supp Q$ is an ideal of $\R[\vb X]$, see e.g. {\cite[Prop.~2.1.2]{marshall_positive_2008}}.
Given a finite set $\vb g = \{ \, g_1, \dots g_r \, \}$, we denote $\cQ(\vb g)$
the smallest quadratic module containing $\vb g$, i.e. $\cQ(\vb g) = \Sigma^2 + \Sigma^2 \cdot g_1 + \dots +\Sigma^2 \cdot g_r$. {Quadratic modules of the form $\cQ(\vb g)$ for finite $\vb g \subset \R[\vb X]$ are called \emph{finitely generated}}. In a similar way, we denote $\cO(\vb g) = \cQ(\Pi \vb g)$ the smallest preordering containing $\vb g$, also called finitely generated if $\vb g$ is finite.

We say that a quadratic module $Q$ is \emph{Archimedean} if $\exists \ 0 \le r \in \R \colon  r-\norm{\vb X }^2 \in Q$.
Notice that if $Q$ is a {finitely generated} Archimedean quadrartic module then $\cS(Q)$ is compact, and these conditions are equivalent for {finitely generated} preorderings {\cite{wörmann1998strikt,Berr2001}}. {We also recall that a finitely generated quadratic module $Q$ is Archimedean if and only if there exists $g \in Q$ such that $\cS(g)$ is compact, see e.g. \cite[Th.~7.1.1]{marshall_positive_2008}.}%

{For an ideal $I \subset \R[\vb X]$,} the \emph{real radical} of $I$, denoted $\sqrt[\R]{I}$, is the ideal:
    \[
        \sqrt[\R]{I} \coloneqq \{ \, f \in \R[\vb X] \mid \exists m \in \N, \ s \in \Sigma^2 \text{ with } f^{2m} + s \in I \, \}.
    \]
    We say that $I$ is \emph{real} or \emph{real radical} if $I = \sqrt[\R]{I}$.

Recall that $\sqrt[\R]{I} = \sqrt{\supp (I+\Sigma^2)}$, see e.g. {\cite[p.~23]{marshall_positive_2008}},
and thus $\sqrt[\R]{I}$ is a radical ideal. We are in particular interested in the case $I = \supp Q = Q \cap -Q$ for an arbitrary quadratic module $Q \subset \R[\vb X]$.
In this case, the real radical and radical coincide: $\sqrt[\R]{\supp Q}= \sqrt{\supp Q}$, see e.g. \cite[Note 2.1.4]{marshall_positive_2008}.

\vspace{-3mm}\paragraph{Linear functionals, truncations and moment matrices.}
For a $\R$-vector space $V$, $V^*$ is the dual space of linear functionals on $V$.
For $\lambda \in V^*$,  we denote $\braket{\lambda}{v} = \lambda (v)$ the application of $\lambda$ to $v \in V$.
For $A \subset V$, 
we define $A^{\perp} \coloneqq \big\{\, \lambda \in  V^* \mid \braket{\lambda}{a}=0 \ \forall a \in A \, \big\}$
and $A^\vee \coloneqq \big\{\, \lambda \in  V^* \mid \braket{\lambda}{a}\ge 0 \ \forall a \in A \, \big\}$.

Let $(\R[\vb{X}])^*$ be the vector space of linear functionals on $\R[\vb X]$. Recall that $(\R[\vb{X}])^*\cong \R[[\vb{Y}]] \coloneqq \R[[Y_1,\dots,Y_n]]$, with the isomorphism given by:
$
    (\R[\vb{X}])^* \ni \lambda \mapsto \sum_{\alpha \in \N^n} \braket{\lambda}{\vb X^{\alpha}} {\vb Y^{\alpha}} \in \R[[\vb{Y}]],
$
where $\{{\vb Y^{\alpha}}\}$ is dual to $\{\vb X^{\alpha} \}$, i.e. $\braket{\vb Y^{\alpha}}{\vb X^{\beta}}=\delta_{\alpha,\beta}$. 
When necessary, we will identify $\lambda \in (\R[\vb{X}])^*$ with its sequence of coefficients  $(\lambda_{\alpha})_{\alpha}$ called \emph{pseudo-moments}, in analogy to the case of a measure, where $\lambda_{\alpha}\coloneqq \braket{\lambda}{\vb X^{\alpha}}$.
{See \cite{mourrain_polynomialexponential_2018} for more details on this approach, or \cite[\S 3.2.1]{lasserre_moments_2010} for a classical presentation based on pseudo-moment sequences. 
Using linear functionals instead of pseudo-moment sequences allows us to treat dual elements, independently of any choice of basis in the primal space.}

If $t\le s\in \N $ and $\lambda \in (\R[\vb{X}]_s)^*$ (or $\lambda \in \R[\vb{X}]^*$), then $\lambda^{[t]} \in (\R[\vb{X}]_t)^*$ denotes its restriction to $\R[\vb X]_t$. Similarly if $B\subset (\R[\vb{X}]_s)^*$ then $B^{[t]} \coloneqq \{ \, \lambda^{[t]} \in (\R[\vb{X}]_t)^* \mid \lambda \in B \, \}$.

For $t\le r \in \N$, $\lambda \in (\R[\vb{X}]_r)^*$ and $g \in \R[\vb{X}]_t$, we define the \emph{convolution of $g$ and $\lambda$} as the element of $(\R[\vb{X}]_{r-t})^*$ defined by $g \star \lambda : f \in \R[\vb X]_{r-t} \mapsto \braket{g \star \lambda}{f} = \braket{\lambda}{gf}$.
We denote $\ann_t(\lambda)$ the \emph{annihilator} of $\lambda$ w.r.t. $\star$ in degree $\le t$, that is $\ann_t(\lambda) = \{ p\in \R[\vb X]_t\mid p \star \lambda=0\}$. 
Given $\lambda \in (\R[\vb{X}]_r)^*$, $r \ge 2t$, we define the Hankel operator:
\begin{align*}
    H_{\lambda}^t \colon \R[\vb X]_t & \to (\R[\vb X]_t)^* \\
    p & \mapsto (p \star \lambda)^{[t]}.
\end{align*}

The \emph{moment matrix} of $\lambda$ in degree $t$ is the matrix $H^t_{\lambda}=(\lambda_{\alpha+\beta})_{|\alpha|\le t,|\beta|\le t}$ of the Hankel operator $H^t_{\lambda}$ with respect to the bases $\{\vb X^{\beta} \}$ and  $\{{\vb Y^{\alpha}}\}$. 
Notice that the moment matrix $H^k_{\lambda}$ can be also identified with the symmetric operator associated to the quadratic form $p\in \R[\vb X]_k \mapsto \braket{\lambda}{p^2}$. 
By definition, the kernel of the moment matrix $H_{\lambda}^t$ is the {annihilator} of $\lambda$ in degree $\le t$: $\ann_t(\lambda)= \ker H^t_{\lambda}$.

If $s \le t$, we can identify the matrix of $H^s_{\lambda}$ with the submatrix of $H^t_{\lambda}$ indexed by monomials of degree $\le s$.
The \emph{localizing matrix} of $\lambda$ at $g\in \R[\vb X]$ is the matrix $H^t_{g \star \lambda}=((g \star \lambda)_{\alpha+\beta})_{\alpha,\beta} = (\sum_{\gamma}g_{\gamma}\lambda_{\alpha+\beta+\gamma})_{\alpha,\beta}$ of the Hankel operator $H^t_{g \star \lambda}$. This coincides with the definition of localizing matrix used in the literature, see e.g. \cite[Eq. (3.14)]{lasserre_moments_2010}.

\section{Finite Convergence and Exactness}\label{sec:examples}
We describe now Lasserre SoS and MoM relaxations \cite{lasserre_global_2001}, and we define the \emph{exactness} property. Hereafter we assume that the minimum $f^*$ of the objective function $f$ is always attained on $S$, that is: $S^{\min}\coloneqq\{\, x \in S \mid f(x)=f^*\,\}\neq \emptyset$.
\subsection{Polynomial optimization relaxations}\label{subsec::poly_opt}

The construction of polynomial optimization relaxations relies on the approximation of the cone of positive polynomials by tractable convex cones, that we recall hereafter. 

\noindent\textbf{Lasserre's SoS hierarchy.} 
For $d\in \N$ let $\Sigma^2_d= \Sigma^2\cap \R[\vb X]_d$ be the finite dimensional convex cone of SoS of degree $\le d$.
For $d\in \N$ and $\vb g = \{ \, g_1, \dots g_r \, \}\subset \R[\vb X]$, let 
$$\cQ_d(\vb g) \coloneqq \big\{ \, s_0 + \sum_{j=1}^r s_j g_j \in \R[\vb{X}]_d \mid r \in \N, \ s_0 \in \Sigma^2_d, \ s_j \in \Sigma^2_{d-\deg g_j} \, \big\}
$$ be the \emph{truncated quadratic module} generated by $\vb g$. {For the applications in polynomial optimization, we are interested in quadratic modules truncated at even degrees.}

We define the \emph{SoS relaxation of order $d$} of problem \eqref{eq:pop} as $\tqgen{g}{2d}$ and the supremum:
\begin{equation}
    \label{def::sosrel}
  f^*_{\sos,d}  \coloneqq \sup \big\{ \, a \in \R \mid f-a  \in \tqgen{g}{2d} \,\big\}.
\end{equation}

\noindent\textbf{Lasserre's MoM hierarchy.} To define the dual approximation of the polynomial optimization problem, we define
for $d\in \N$ and $\vb g = \{ \, g_1, \dots g_r \, \} \subset \R[\vb X]_d$:
\[
    \cL_d(\vb g) \coloneqq \cQ_d(\vb g)^\vee = \{ \, \lambda \in (\R[\vb X]_d)^* \mid  \braket{\lambda}{q}\ge 0 \ \forall q \in \cQ_d(\vb g) \, \}
\]
the cone of \emph{positive linear functionals} on $\cQ_d(\vb g)$,
which is the \emph{dual convex cone} to $\tqgen{g}{d}$, see \cite[Sec.~3.6]{marshall_positive_2008}. 
By conic duality, $\overline{\tqgen{g}{d}}=\cL_d(\vb g)^{\vee}$.

We have $\lambda \in \cL_d(\vb g)$ if and only if $\braket{\lambda}{s}\ge 0 \ \forall s \in \Sigma_d^2$ and $\braket{\lambda}{sf}\ge 0 \ \forall g \in \vb g, \forall s \in \Sigma^2$ such that $\deg gs \le d$. Another equivalent way to describe 
$\cL_d(\vb g)$ is using positive semidefinite matrices or \emph{Linear Matrix Inequalities}, since  the $\cL_d(\vb g)$ are spectrahedra: $\lambda \in \cL_d(\vb g)$ if and only if the symmetric matrices  $H_{\lambda}^{\floor{\frac{d}{2}}}$, $H_{g_1 \star\lambda}^{\floor{\frac{d - \deg g_1}{2}}}$, \dots, $H_{g_r \star \lambda}^{\floor{\frac{d- \deg g_r}{2}}}$ are positive semidefinite. {As before, for the applications in polynomial optimization we are interested in positive linear functionals acting on even degree polynomials.}

We consider an affine hyperplane section of the cone $\cL_{2d}(\vb g)$:
\[
    \cLone_{2d}(\vb g) \coloneqq \big\{ \, \lambda \in \cL_{2d}(\vb g) \mid \braket{\lambda}{1}=1 \, \big\}.
\]
This will be the set of feasible pseudo-moment sequences of the MoM relaxation of order $d$. 
With this notation we define the \emph{MoM relaxation of order $d$} of problem \eqref{eq:pop} as: 
\begin{equation}
\label{def::momrel}
  f^*_{\mom,d}  \coloneqq \inf \big\{ \, \braket{\lambda}{f} \in \R \mid \lambda \in \cLone_{2d}(\vb g) \,\big\}.
\end{equation}
We easily verify that
$f^*_{\sos,d} \le f^*_{\mom,d} \le f^{*}$.

\begin{definition}
    Consider the problem of minimizing $f \in \R[\vb{X}]$ on $\cS(\vb g)$. We define the set of \emph{minimizing linear functionals} at relaxation order $d$ as the linear functionals $\lambda$ minimizing \eqref{def::momrel}, i.e.

    \[
        \cL^{\min}_{2d}(f;\vb g) \coloneqq \big\{ \,\lambda \in \cLone_{2d}(\vb g) \mid \braket{\lambda}{f}=f_{\mom, d}^* \, \big\}.
    \]
\end{definition}
When $\cQ(\vb g)$ is Archimedean and $d$ is big enough, the infimum $f^*_{\mom,d}$ is attained and $\cL^{\min}_{2d}(f;\vb g)$ is nonempty, see \cite{josz_strong_2016} and {\cite[p.~101]{baldi_thesis_2022}}. 
Geometrically, $\cL^{\min}_{2d}(f;\vb g)$ is the exposed face of {the minimizers} of the MoM relaxation of order $d$.

We now introduce two convergence properties that will be central in the article.

\begin{definition}[Finite Convergence]
    We say that the SoS hierarchy $(\cQ_{2d}(\vb g))_{d\in \N}$
    (resp. the MoM hierarchy {$(\cL_{2d}(\vb g))_{d\in \N}$)} has the
    \emph{Finite Convergence} property for $f$ if {$\exists d\in \N$ such that $f^*_{\sos,d}=f^*$ (resp. $f^*_{\mom,d}=f^*$)}.
\end{definition}

Notice that if the SoS hierarchy has finite convergence then the MoM hierarchy has finite convergence too, since $f^*_{\sos,d}\le f^*_{\mom,d}\le f^*$. Moreover, if $f^*_{\mom, d} = f^*$ then $\cL^{\min}_{2d}(f;\vb g) = \{ \, \lambda \in \cLone_{2d}(\vb g) \mid \braket{\lambda}{f}=f^* \, \}$.

\begin{definition}[SoS Exactness]
    We say that the SoS hierarchy $(\cQ_{2d}(\vb g))_{d\in \N}$ is \emph{exact} for $f$ if it has the finite convergence property and {$\exists d \in \N$ such that $f-f^* \in \tqgen{g}{2d}$ (in other words $\sup=\max$ in the definition of $f^*_{\sos,d}$)}.
\end{definition}

{Notice that, by definition, if the conditions for SoS and MoM finite convergence, and SoS exactness, hold true at order $d$, then they hold true at order $k$ for all $k\ge d$.}

For the moment hierarchy we can ask the property that every truncated functional minimizer is {represented by} a measure. {This is the most natural condition which implies the finite convergence of the MoM hierarchy. We will show in \Cref{sec::flat_tru} that flat truncation (the condition used in practice to verify the finite convergence and to extract the minimizers) implies that every truncated functional minimizer is represented by a measure.}

{In the following we denote $\cMone(S)$ the finite positive Borel probability measure supported on $S$, which are identified with the induced moment linear functionals $f \mapsto \int f \dd \mu$ acting on polynomials. $\cMone(S)^{[k]}$ denotes the restrictions of such linear functionals to polynomials of degree $\le k$.}
\begin{definition}[MoM Exactness]
    We say that the MoM hierarchy $(\cL_{2d}(\vb g))_{d \in \N}$ is
    \emph{exact} for $f$ if:
    \begin{itemize}
        \item it has the finite convergence property, and
        \item for every $k \in \N$, there exists $d=d(k) \in \N$ such that every truncated functional minimizer
          is {represented by} a probability measure supported on $S$, i.e. $\cL_{2d}^{\min}(f;\vb g)^{[k]} \subset
          \cMone(S)^{[k]}$.
    \end{itemize}
\end{definition}

Notice that, {in contrast with the previous definitions,} in the definition of MoM exactness we require the property $\cL_{2d(k)}^{\min}(f;\vb g)^{[k]} \subset \cMone(S)^{[k]}$ to hold for every $k$, and in general the fact that the property is verified for particular $k$ does not imply that it holds for every $k$.

We show now an example where we investigate the properties of finite convergence and exactness.
\begin{example}
\label{ex::BHC}
Consider the problem of minimizing $f = X^2$ on the semialgebraic set $S = \cS(\vb g) = \cS(1-X^2-Y^2, X+Y-1) \subset \R^2$ defined by $g_1 = 1-X^2-Y^2$ and $g_2 = X+Y-1$. Clearly, the minimum is $f^* = 0$ and the only minimizer is $(0,1)$. Notice that $f-f^* = X^2 \in \cQ_2(1-X^2-Y^2, X+Y-1)$ 
and therefore $f^*_{\sos, 1} = f^*_{\mom, 1} = f^* = 0$, we have finite convergence and the SoS hierarchy is exact.

We now investigate MoM exactness. If a truncated moment sequence $\lambda$ is {represented by} a probability measure $\mu \in \cMone(S)$ such that $\int f \dd{\mu} = f^*$, then the support of $\mu$ should be contained in the set of minimizers $S^{\min} = \{ \, (0,1) \, \}$ of $f$. Thus $\mu = \eval_{(0,1)}$ is the evaluation at $(0, 1)$ (or in other words, the Dirac measure concentrated at $(0,1)$). Its moments are easily computed: $\mu_{00} =1$, $\mu_{10} =0$, $\mu_{01} =1$, $\mu_{20} =0,{\mu_{11}=0, \mu_{02}=1}$. 

Analyzing the constraints on the degree one and two moments of an optimal  moment sequence $\lambda \in \cL_2^{\min}(f;\vb g)$, where
\begin{align*}
    \cL_2^{\min}(f;\vb g) & = \{ \, \lambda \in \R[\vb X]_2^* \mid H^1_{\lambda} \succcurlyeq 0, \ H^0_{g_1 \star \lambda}\succcurlyeq 0, \ H^0_{g_2 \star \lambda}\succcurlyeq 0, \braket{\lambda}{1}=1, \braket{\lambda}{f} = f^*=0\} \\
    & = \{ \, \lambda \in \R[\vb X]_2^* \mid \begin{pmatrix}\lambda_{00} & \lambda_{10} & \lambda_{01} \\ \lambda_{10} & \lambda_{20} & \lambda_{11}\\
    \lambda_{01} & \lambda_{11} & \lambda_{02}\end{pmatrix} \succcurlyeq 0, \ \lambda_{00} - \lambda_{20} - \lambda_{02} \ge 0, \  \lambda_{10}+ \lambda_{01} - \lambda_{00} \ge 0, \lambda_{00} = 1,  \lambda_{20} = 0\},
\end{align*}
we deduce that  $\lambda_{00} = 1$,  $\lambda_{10} = 0$, $\lambda_{01} = 1$, $\lambda_{20} = 0$,  $\lambda_{11} = 0$ and $\lambda_{02} = 1$: this shows that the only element of $\cL_2^{\min}(f;\vb g)$ is $\lambda = \eval_{(0,1)}^{[2]}$.
In particular notice that $\braket{\lambda}{X^2} = \braket{\lambda}{(Y-1)^2}=0$. 

For any order $d\ge 1$ and any element $\lambda \in \cL_{2d}^{\min}(f;\vb g)$, its truncation $\lambda^{[2]}$ is in 
$\cL_{2}^{\min}(f;\vb g)$ since $\braket{\lambda^{[2]}}{X^2} = \braket{\lambda}{X^2}=0$. This implies that $\braket{\lambda}{X^2} = \braket{\lambda}{(Y-1)^2}=0$ and thus $\forall p \in \R[\vb X]_{d}, \braket{\lambda}{X\, p}= \braket{\lambda}{(Y-1) p}=0$, see e.g. \cite[Lem.~3.12]{lasserre_moment_2013}.
We deduce from \Cref{prop:dual eval} that the moments of $\lambda^{[d]} = \eval_{(0,1)}^{[d]}$ are {represented by} the Dirac measure $\eval_{(0,1)}$. Therefore the MoM hierarchy is exact.

Another equivalent way to certify MoM exactness is to check flat truncation (see \Cref{def:flat truncation}). For $\lambda \in \cL_{2d}^{\min}(f;\vb g)$ with $d\ge 2$, we have computed the moments of degree $\le 2$.
Since the moment matrices in degree $\le 2$:
\[
    H_{\lambda}^0 = \begin{pmatrix} 1 \end{pmatrix}, \quad H_{\lambda}^1 = \begin{pmatrix}1 & 0 & 1 \\ 0 & 0 & 0\\ 1 & 0 & 1 \end{pmatrix}
\]
have the same rank, the flat extension property is satisfied. This certifies that $\lambda^{[2]} =  \eval_{(0,1)}^{[2]}$ is {represented by} a measure supported at the minimizer of $f$ on $S$ and the MoM hierarchy is exact, see \Cref{thm::flat_implies_exact}.
\end{example}

In practice, to check the finite convergence, one tests the flat
extension or the flat truncation property (see
\cite{CurtoFlatExtensionsPositive1998},
\cite{Laurentgeneralizedflatextension2009},
\cite{nie_certifying_2013}).
But flat truncation certifies MoM exactness, and not only
finite convergence. We will investigate flat truncation for POPs in \Cref{sec::flat_tru}, where it is also described in more detail its practical importance.

Notice that in the previous example the rank condition is satisfied by the full sequence of moments of $\lambda \in \cL_2^{\min}(f;\vb g)$ (i.e. general this is not true, as the high degree moments may be increasing the rank of the moment matrix, see for in instance \cite[Ex.~1.1]{nie_certifying_2013} {and also \cite{Quijorna2021}}. Therefore it is necessary to discard the high degree moments, i.e. to consider $\cL_{2d}^{\min}(f;\vb g)^{[t]}$, for some $t \le 2d$, instead of simply $\cL_{2d}^{\min}(f;\vb g)$. This implies that we look for rank conditions on the moment matrix of the truncated moment sequence (i.e. we use the \emph{flat truncation}, see \Cref{def:flat truncation}).
{This point of view, of considering restrictions of linear functionals to a smaller degree, is a key concept in the manuscript, and it is ubiquitous in all our main results.}

We finally recall that we are assuming $S^{\min}\neq \emptyset$ (and in particular $f^*$ is finite). Notice also that if strong duality holds, then there is no duality gap and SoS finite convergence is equivalent to MoM finite convergence.
\subsection{Examples and counterexamples}

{While for generic, regular polynomial optimization problems we expect both SoS exactness \cite{nie_optimality_2014} and MoM exactness with flat truncation (see \Cref{sec::BHC} and \cite{nie_certifying_2013}), these two notions are in general independent.}

{We collected and described in details known and new examples of these possible pathological behaviours in \Cref{appendix}. In particular, we  produce new examples of optimization problems over finite semialgebraic sets, defined by Archimedean {quadratic} modules, where we do not have finite convergence and flat truncation, see \Cref{ex:no_convergence_zero} and \Cref{ex::finite_not_exact}}. We summarize these examples in \Cref{tab::examples} in terms of
the properties of finite convergence (SoS f.c. and MoM f.c.)
exactness (SoS ex. and MoM ex.), flat truncation (Flat tr.) and the dimension of the semialgebraic set $S$.
\begin{table}
\caption{Summary of convergence results.}
\label{tab::examples}
\begin{center}
\begin{tabular}{|c | c | c | c | c | c | c |}\hline
  Expl. & SoS f. c. & SoS ex. & MoM f. c. & Flat tr. & MoM ex. 
  & $\dim S$\\
  \hline
  {\ref{ex:no_convergence_zero}} & {NO} & {NO} & {NO} & {NO} & {NO} 
  & {0}\\
  \hline
  \ref{ex:curvegenus1} & NO & NO & NO & NO & NO 
  & 1\\
  \hline
  \ref{ex:dim2} & NO & NO & NO & NO & NO 
  & 2\\
  \hline
  \ref{ex:dim3} & NO & NO & NO & NO & NO 
  & $\ge 3$\\
  \hline
  \ref{ex:cylinder} & YES & YES & YES & NO & NO 
  & $\ge 3$\\
  \hline
  \ref{ex::finite_not_exact} & YES & YES & YES & NO & NO 
  & 0\\
  \hline
  \ref{ex::nonrad_grad} & YES & NO & YES & YES & YES 
  & 0\\
  \hline
  \ref{ex::fin_var} & YES & NO & YES & YES  & YES 
  & 0\\ \hline
\end{tabular}
\end{center}
\end{table}
\section{Geometry of Moment Representations}\label{sec:Geometry}
Determining whether a linear functional on the polynomial ring {is represented by} a measure is the so-called \emph{Moment Problem}.
By Haviland's theorem (see e.g. \cite[Th. 3.1.2]{marshall_positive_2008} and \cite[Th. 1.12]{schmudgen_moment_2017}) an infinite pseudo-moment sequence or a linear functional $\lambda \in \R[\vb X]^*$ {is represented by} a measure, if and only if $\lambda$ is takes {nonnegative values when applied to nonnegative polynomials}.  Since checking this is a computationally hard task, a motivation supporting Sum of Squares relaxations is to find (proper) subsets of positive polynomials that have the same property, but chosen in such a way that checking this condition is easy. An important result in this direction is the theorem of Putinar, refining the result of Schmüdgen.
\begin{theorem}[{\cite{putinar_positive_1993}}]
    \label{thm::mom_com}
    Let $\cQ(\vb g)$ be an Archimedean quadratic module and $S=\cS(\vb g)$. Let $\lambda \in \R[\vb X]^*$. If $\braket{\lambda}{g} \ge 0$ for all $q \in \cQ(\vb g)$, then $\lambda \in \cM(S)$ is {represented by} a measure.
\end{theorem}

Hereafter we analyze in detail the properties of finite dimensional cones of {\emph{truncated}} pseudo-moment sequences, and we investigate the truncated moment problem. 
\Cref{sec:truncatedmomrep} and \Cref{ref:annihilator} contains the main technical tools of the paper. 
We provide a new and explicit description of the dual of the hierarchy of truncated moment sequences, in terms of a quadratic module (\Cref{prop::qtilde}),
and consequently prove properties of the cones $\cL_d(\vb g)$ (\Cref{lem::dual_support}) and of their generic elements (\Cref{thm::generic_t}). Finally we apply these results to the zero-dimensional case (\Cref{thm::fin_set_qua}) and we investigate the connections with the flat truncation property (\Cref{subsec::interpolation_bases}).

\subsection{Truncated moment representations}\label{sec:truncatedmomrep}
 
For a finitely generated quadratic module $Q=\cQ(\vb g)\subset \R[\vb X]$, we have $\cL_k(\vb g) = \tqgen{g}{k}^{\vee} = \overline{\tqgen{g}{k}}^{\vee}$ and $\cL_k(\vb g)^\vee=\overline{\tqgen{g}{k}}$, where $\relax^{\vee}$ denotes the dual cone and the closure is taken w.r.t. the euclidean topology on $\R[\vb X]_k$. Thus the following definition is natural for the study of the MoM relaxations.
\begin{definition}
    Let $Q = \cQ(\vb g)$ be a finitely generated quadratic module. We
    define $\widetilde{Q} = \bigcup_d \overline{\tqgen{\vb g}{d}}$.
\end{definition}

Notice that $\widetilde{Q}$ depends a priori on
the generators $\vb g$ of $Q$. We will prove that $\widetilde{Q}$ is a finitely generated quadratic
module and that it does not depend on the particular choice of
generators. Moreover notice that $ Q \subset
\widetilde{Q} = \bigcup_d \overline{\tqgen{g}{d}}\subset
\overline{\bigcup_d \tqgen{g}{{d}}}=\overline{Q} \subset \R[\vb X]$, {where the closure in $\R[\vb X]$ is taken with respect to the finest locally convex topology, see e.g. \cite[Sec.~3.6]{marshall_positive_2008}}. We also remark that these inclusions can
be strict, as we will discuss in this section ({see in particular \Cref{ex::scheiderer}}).

Recall that $\supp Q = Q \cap -Q$ is an ideal if $Q$ is a quadratic module. {Recall also that, if $J \subset \R[\vb X]$, then $J_{k} = J \cap \R[\vb X]_k$ denotes the intersection with the space of polynomials of degree $\le k$}.
In this section, we also denote $\supp \tqgen{g}{k} = \tqgen{g}{k} \cap - \tqgen{g}{k} \subset \R[\vb X]_k$ and $\supp \overline{\tqgen{g}{k}} = \overline{\tqgen{g}{k}} \cap -\overline{\tqgen{g}{k}}\subset \R[\vb X]_k$.
\begin{lemma}
    \label{lem::support}
    Let $Q=\cQ(\vb g)$ and $J = \sqrt[\R]{\supp Q}$. Then for every $d \in \N$ there
    exists $k \ge d$ such that $J_d  \subset \overline{
    \tqgen{g}{k}}$.
\end{lemma}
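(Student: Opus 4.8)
The statement to prove is \Cref{lem::support}: given $Q = \cQ(\vb g)$ with $J = \sqrt[\R]{\supp Q}$, for every $d$ there is $k \ge d$ with $J_d \subset \overline{\cQ_{[k]}}$. The key idea is that $J_d$ is a finite-dimensional vector space, so it suffices to show each of its elements lies in $\overline{\cQ_{[k]}}$ for $k$ large enough (possibly depending on the element, then take the max). Moreover, since $J = \sqrt[\R]{\supp Q}$ is a \emph{vector subspace} (it is an ideal, hence closed under $\pm$), I would reduce the problem: a vector space is generated by finitely many elements $h$ with $\pm h$ both to be captured, so it is enough to show that for every $h \in J_d$ we have $\pm h \in \overline{\cQ_{[k]}}$, i.e. $h \in \supp \overline{\cQ_{[k]}}$ for some $k$.

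First I would unwind the definition of the real radical via \eqref{def::realr2}: $h \in J = \sqrt[\R]{\supp Q}$ means there exist $m \in \N$ and $s \in \Sigma^2$ with $h^{2^m} + s \in \supp Q \subset -Q$, so $-(h^{2^m}+s) \in Q$. The plan is then to show, by an induction-style argument on the exponent $2^m$, that $h$ (more precisely $\pm h$) can be forced into the closure of a truncated quadratic module. The base mechanism is the standard trick: if $-(h^2 + s) \in Q$ with $s \in \Sigma^2$, then for every $\epsilon > 0$, $\epsilon - h = \epsilon + \tfrac{1}{4\epsilon}(-(h^2+s)) \cdot (\text{something}) + \cdots$ — more cleanly, one uses that $-h^2 \in \overline{Q}$ forces $\pm h \in \overline{Q}$ because $\epsilon \pm h - \tfrac{1}{4\epsilon}(h \mp \epsilon)^2 \cdot(\ldots)$; the cleanest version is: from $-h^{2} \in \overline{Q}$ (after absorbing $s$, since $-h^2 - s \in Q$ and $s \ge 0$ gives $-h^2 \in \overline{Q}$ by adding $\Sigma^2$... actually one needs $\sqrt{\supp Q} \subset \overline{Q}$, which is \cite[th. 4.1.2]{marshall_positive_2008}, already invoked in \Cref{lem::momgeo1}). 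So the real content is the \emph{degree bookkeeping}: tracking at which truncation level $k$ the certificate $\epsilon \pm h \in \cQ_{[k]}$ appears.

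Concretely, I would argue: by \Cref{thm::real_null}-type reasoning or directly, $h \in \sqrt[\R]{\supp Q}$ implies $h \in \sqrt{\supp Q} \subset \overline{Q} = \bigcup_k \overline{\cQ_{[k]}}$ is \emph{not} quite enough because we need $h$ itself, not just that $\overline{Q}$ contains it — but in fact $h \in \overline{Q}$ literally means $h \in \overline{\cQ_{[k]}}$ for some $k$, by definition of $\overline{Q} = \bigcup_k \overline{\cQ_{[k]}}$ and the fact that this union is increasing and each $\overline{\cQ_{[k]}} \subset \R[\vb X]_k$ is closed. Wait — $\overline{Q}$ in the paper's topology is the closure of $\bigcup_k \cQ_{[k]}$, and one must check $\overline{Q} = \bigcup_k \overline{\cQ_{[k]}}$; this is essentially the remark after the definition of $\widetilde Q$, where it is stated $\widetilde Q = \bigcup_d \overline{Q_{[d]}} \subset \overline Q$, and in fact in the fine topology a set is closed iff its intersection with each finite-dimensional subspace is closed, which gives $\overline{\bigcup_k \cQ_{[k]}} = \bigcup_k \overline{\cQ_{[k]}}$ since any element lives in some $\R[\vb X]_k$. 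So for a single $h \in J_d$: since $h \in \sqrt{\supp Q} \subset \overline Q$, there is $k(h)$ with $h \in \overline{\cQ_{[k(h)]}}$. Then $k := \max(d, \max_{h \in B} k(h))$ over a finite spanning set $B$ of the vector space $J_d$ works, using that $\overline{\cQ_{[k]}}$ is a convex cone closed under $\pm$ on $J$ (since $J \subset \supp \overline Q$... no: we need $\overline{\cQ_{[k]}}$ to contain the \emph{span} of $B$, which requires $\overline{\cQ_{[k]}} \cap J_d$ to be a subspace).

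**Main obstacle.** The subtle point is the last one: $\overline{\cQ_{[k]}}$ is only a convex cone, so knowing $h_1, \dots, h_N \in \overline{\cQ_{[k]}}$ and that they span $J_d$ does not immediately give all of $J_d \subset \overline{\cQ_{[k]}}$ — one also needs $-h_i \in \overline{\cQ_{[k]}}$. The fix: since each $h_i \in J = \sqrt[\R]{\supp Q}$ and $J$ is an \emph{ideal}, $-h_i \in J$ too, so $-h_i \in \overline Q$ as well, hence $-h_i \in \overline{\cQ_{[k']}}$ for some $k'$; enlarging $k$ to also exceed all these $k'$, we get $\pm h_i \in \overline{\cQ_{[k]}}$ for all $i$, and now $\overline{\cQ_{[k]}} \cap J_d$ is a convex cone containing $\pm$ a spanning set, hence equals $J_d$. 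So the proof is essentially: (1) $J_d$ is finite-dimensional, pick a spanning set; (2) $J \subset \sqrt{\supp Q} \subset \overline Q$ and $\overline Q = \bigcup_k \overline{\cQ_{[k]}}$ with the union increasing; (3) get a uniform $k$ capturing $\pm$ all spanning elements; (4) conclude by convexity/conicity. The one place to be careful is justifying $\overline Q = \bigcup_k \overline{\cQ_{[k]}}$ in the fine topology, but this is already implicitly used earlier in the paper in the discussion of $\widetilde Q$.
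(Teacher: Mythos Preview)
Your argument has a genuine gap at the step ``$h \in \overline{Q}$ implies $h \in \overline{\cQ_{[k(h)]}}$ for some $k(h)$''. This implication is exactly the assertion $\overline{Q} \subset \widetilde{Q}$, which is \emph{false} in general: the paper itself points out (remark after \Cref{prop::qtilde}, using \Cref{ex::scheiderer}) that $\widetilde{Q} \subsetneq \overline{Q}$ can occur. Your justification that $\widetilde{Q}$ is closed in the fine topology is incorrect: for a fixed finite-dimensional $W \subset \R[\vb X]_N$, the intersection $\widetilde{Q} \cap W = \bigcup_k (\overline{\cQ_{[k]}} \cap W)$ is an increasing union of closed sets, and such a union need not be closed. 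Concretely, $q_n \to f$ with $q_n \in Q \cap \R[\vb X]_N$ only says that each $q_n$ admits \emph{some} SoS certificate, but the degree of that certificate can go to infinity even though $\deg q_n \le N$; this is precisely the non-stability phenomenon. So \cite[th.~4.1.2]{marshall_positive_2008} gives $J \subset \overline{Q}$, but that is strictly weaker than $J \subset \widetilde{Q}$, which is the content of the lemma.

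The paper's proof supplies exactly the missing degree control. Starting from $f^{2^m} \in \supp Q$ (with a uniform $m$ depending only on the generators of $J$), it uses the explicit identity
\[
m - a = \Bigl(1-\tfrac{a}{2}\Bigr)^2 + \Bigl(1-\tfrac{a^2}{8}\Bigr)^2 + \cdots + \Bigl(1-\tfrac{a^{2^{m-1}}}{2^{2^m-1}}\Bigr)^2 - \tfrac{a^{2^m}}{2^{2^{m+1}-2}}
\]
with the substitution $a = -\tfrac{mf}{\epsilon}$ (and a rescaling) to write $f+\epsilon \in \cQ_{[k]}$ with $k = \max\{k', 2^m d\}$ \emph{independent of $\epsilon$}. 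That independence is the whole point: it gives $f \in \overline{\cQ_{[k]}}$ directly, without passing through $\overline{Q}$. Your ``standard trick'' paragraph was heading in this direction before you abandoned it; the identity above is precisely the iterated version of that trick needed to handle $f^{2^m}$ rather than $f^2$, with explicit degree bookkeeping.
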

\begin{proof}
    We denote $Q_{[d]}\coloneqq\tqgen{g}{d}$. Let $m$ be big enough such that $\forall f \in J= \sqrt[\R]{\supp Q}= \sqrt{\supp Q}$ we have: $f^{2^m} \in \supp Q$
    (if $\sqrt{\supp Q}=(h_1,\dots,h_t)$ and $h_i^{a_i} \in \supp Q$, we can take
    $m$ such that $2^m \ge a_1+\dots+a_t$).
    Let $f\in J_{d}$ with $\deg f \leq d$. Then $f^{2^m} \in \supp \cQ_{[k']}\subset Q_{[k']}$ for $k'\in \N$ big enough. Using the identity \cite[Rem.~2.2]{scheiderer_non-existence_2005}:
    \[
        m - a = (1- \frac{a}{2})^2+(1-\frac{a^2}{8})^2+(1-\frac{a^4}{128})^2+ \dots + (1 - \frac{a^{2^{m-1}}}{2^{2^m-1}})^2 - \frac{a^{2^m}}{2^{2^{m+1}-2}},
    \]
    substituting $a$ by $-\frac{mf}{\epsilon}$ and multiplying by
    $\frac{\epsilon}{m}$, we have that $\forall \epsilon >0$, $f +
    \epsilon \in  Q_{[k]}$ for $k=\max \{k', 2^m d\}$ (the degree of the representation of $f+\epsilon$ does not depend on $\epsilon$). This implies that $f \in \overline{\cQ_{[k]}}$.
  \end{proof}

We can now describe $\widetilde{Q} = \bigcup_d \overline{\tqgen{\vb g}{d}}$.
\begin{theorem}
    \label{prop::qtilde}
    Let $Q=\cQ(\vb g)$ be a finitely generated quadratic module and
    let $J = \sqrt[\R]{\supp Q}$. Then $\widetilde{Q}=Q+J$ and $\supp \widetilde{Q} = J$. In particular, $\widetilde{Q}$ is a finitely generated quadratic module and does not depend on the particular choice of generators of $Q$.
\end{theorem}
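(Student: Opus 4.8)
The plan is to prove the chain $Q+J \subset \widetilde{Q} = \bigcup_d \overline{Q_{[d]}} \subset Q+J$, and then derive the statement about the support. First I would establish the inclusion $Q + J \subset \widetilde{Q}$. Clearly $Q = \bigcup_d Q_{[d]} \subset \bigcup_d \overline{Q_{[d]}} = \widetilde{Q}$, so it remains to show $J \subset \widetilde{Q}$. This is exactly the content of \Cref{lem::support}: for every $d$, the homogeneous-degree truncation $J_d$ lies in $\overline{Q_{[k]}}$ for some $k \ge d$, hence $J = \bigcup_d J_d \subset \bigcup_k \overline{Q_{[k]}} = \widetilde{Q}$. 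Since $\widetilde{Q}$ is closed under addition (each $\overline{Q_{[d]}}$ is, and the family is nested), we get $Q + J \subset \widetilde{Q}$.

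The main work is the reverse inclusion $\widetilde{Q} = \bigcup_d \overline{Q_{[d]}} \subset Q + J$. Here the key fact I would invoke is the truncated analogue of $\sqrt{\supp Q} \subset \overline{Q}$: more precisely, I would argue that $\overline{Q_{[d]}} \subset Q_{[d]} + J_d$ (or at least $\subset Q + J$) by analysing what it means for a polynomial $p$ of degree $\le d$ to lie in the closure of the finite-dimensional cone $Q_{[d]}$. The natural route is via conic duality in the finite-dimensional space $\R[\vb X]_d$: $\overline{Q_{[d]}} = \cL_d(\vb g)^\vee$, so $p \in \overline{Q_{[d]}}$ iff $\braket{\sigma}{p} \ge 0$ for all $\sigma \in \cL_d(\vb g)$. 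I would then show that any such $p$ decomposes as $q + j$ with $q \in Q$ and $j \in J$, by separating the "strictly positive part" from the "support part": if $p \in \overline{Q_{[d]}}$ but $p \notin Q + J$, one produces a functional in $\cL_d(\vb g)$ that is zero on $Q_{[d']}$ for the relevant $d'$ but detects $p$ — contradicting $p \in \overline{Q_{[d]}}$. Alternatively, and perhaps more cleanly, I would use the Positivstellensatz-type structure: an element of $\overline{Q_{[d]}}$ that is not already in $Q_{[d]}$ must, after the standard $\epsilon$-perturbation argument, vanish modulo $\supp Q$ in its "boundary behaviour", forcing it into $\sqrt{\supp Q} = J$ up to an element of $Q$.

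Once $\widetilde Q = Q + J$ is established, the rest follows quickly. Since $J$ is a (real radical) ideal and $Q$ is a quadratic module, $Q + J$ is a quadratic module; since $Q = \cQ(\vb g)$ is finitely generated and $J = \sqrt[\R]{\supp Q}$ is a finitely generated ideal (Noetherianity of $\R[\vb X]$), $Q + J$ is finitely generated, hence so is $\widetilde Q$. For the support: $\supp(Q+J) \supset J$ is immediate since $J \subset Q+J$ and $J = -J \subset Q + J$. For the reverse, $\supp(Q+J) \subset \supp \overline{Q} = \sqrt{\supp Q} = J$ using $\widetilde Q \subset \overline Q$ and the identification $\sqrt[\R]{\supp Q} = \sqrt{\supp Q}$ recalled in \Cref{sec::stellensatz}; so $\supp \widetilde Q = J$. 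Finally, independence from the choice of generators is automatic from the intrinsic description $\widetilde Q = Q + \sqrt[\R]{\supp Q}$, since both $Q$ and $\supp Q$ depend only on $Q$, not on a presentation.

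The hard part will be the reverse inclusion $\overline{Q_{[d]}} \subset Q + J$ — making precise, in the truncated finite-dimensional setting, the passage from "$p$ is a limit of degree-$\le d$ representatives" to "$p$ is genuinely a sum of a quadratic-module element and a real-radical-of-support element"; the $\epsilon$-perturbation identity from \cite{scheiderer_non-existence_2005} used in \Cref{lem::support} will likely need to be run in reverse or combined with a dimension/closedness argument on the truncated cone. The other steps (additivity, finite generation, support computation, generator-independence) are routine given the earlier results in the excerpt.
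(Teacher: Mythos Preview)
Your treatment of $Q + J \subset \widetilde Q$ via \Cref{lem::support} is correct and matches the paper. However, you have the difficulty exactly backwards: the inclusion $\widetilde Q \subset Q + J$ that you flag as ``the hard part'' is in fact a one-line consequence of a closedness result you never mention. The paper simply invokes \cite[lemma~4.1.4]{marshall_positive_2008}, which gives that $Q_{[d]} + J_d$ is closed in the finite-dimensional space $\R[\vb X]_d$. Since $Q_{[d]} \subset Q_{[d]} + J_d$ and the right-hand side is closed, one immediately obtains $\overline{Q_{[d]}} \subset Q_{[d]} + J_d \subset Q + J$, and taking the union over $d$ finishes. Your proposed routes (conic duality, separating functionals, running the $\epsilon$-identity in reverse) are left vague, and any precise version of the separation argument would itself require knowing that some truncation of $Q+J$ is closed --- i.e., exactly the lemma you are missing.

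There is also a genuine error in your computation of $\supp \widetilde Q$. You assert $\supp \overline Q = \sqrt{\supp Q}$, but this is false in general: \Cref{ex::scheiderer} exhibits an Archimedean $Q$ with $\supp Q = (0)$, hence $\sqrt{\supp Q} = (0)$, while $\overline Q = \cP(\{\vb 0\})$ has $\supp \overline Q = (X,Y)$. So the chain $\supp(Q+J) \subset \supp \overline Q = \sqrt{\supp Q} = J$ does not go through. The paper instead appeals to \cite[lemma~3.16]{scheiderer_non-existence_2005}, which directly yields $\supp(Q+J) = J$ when $J = \sqrt[\R]{\supp Q}$.
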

\begin{proof}
     We denote $\tqgen{g}{d} = Q_{[d]}$. By \cite[Lem. 4.1.4]{marshall_positive_2008} $Q_{[d]}+J_d$ is closed in $\R[\vb X]_d$, thus
    $\overline{Q_{[d]}}\subset Q_{[d]}+J_d$. Taking
    unions we prove that $\widetilde{Q}\subset Q+J$.

    Conversely by \Cref{lem::support} for $d\in \N$ and $k\ge d \in
    \N$ big enough,  $J_d \subset \overline{Q_{[k]}}$. Then, we have $Q_{[d]}+J_d \subset Q_{[k]}+\overline{Q_{[k]}} \subset \overline{Q_{[k]}} + \overline{Q_{[k]}}\subset \overline{Q_{[k]}}$.
    Taking unions on both sides gives $Q + J \subset \widetilde{Q}$.

    Finally $\supp \widetilde{Q} = \supp (Q + J) = J$ by \cite[Lem. 3.16]{scheiderer_non-existence_2005}.
\end{proof}

\begin{remark}
    We proved that $\widetilde{Q}=Q+\sqrt[\R]{\supp Q}$.
    We also have $\supp \widetilde{Q}=\sqrt[\R]{\supp Q}$ so that if
    $\supp Q$ is not real radical then
    $Q \subsetneq \widetilde{Q}$. \Cref{ex::fin_var} is such a case
    where $\supp Q\neq \sqrt[\R]{\supp Q}$. We notice that, by \Cref{prop::qtilde} and \cite[Th.~3.17]{scheiderer_non-existence_2005}, if $Q$ is \emph{stable},\footnote{$\cQ(\vb g)$ is stable if $\forall d \in \N$ there exists $k \in \N$ such that $\cQ(\vb g) \cap \R[\vb X]_d \subset \tqgen{g}{k}\cap \R[\vb X]_d$.} then $\widetilde{Q}=\overline{Q}$.
    But the inclusion $\widetilde{Q} = Q + \sqrt{\supp Q} \subset \overline{Q}$ can be strict, as shown by the following example.
  \end{remark}
 
\begin{example}[{\cite[Ex. 3.2]{scheiderer_distinguished_2005}, \cite[Rem. 3.15]{scheiderer_non-existence_2005}, \Cref{ex:no_convergence_zero}, \Cref{ex::finite_not_exact}}]
    \label{ex::scheiderer}
    Let $Q=\cQ(1-X^2-Y^2, -XY, X-Y, Y-X^2)\subset \R[X,Y]$. Notice that $S=\cS(Q)=\{\vb 0\}$ and that $Q$ is Archimedean. Therefore, by \Cref{thm::mom_com}, $\overline{Q}=\cP(\{\vb 0\})$. One can verify that $\supp Q = (0)$ and that $\cI(S)=\supp \overline{Q}= (X,Y)$. Thus we have $Q + \sqrt{\supp Q} = \widetilde{Q}\subsetneq \overline{Q}$.
\end{example}
\Cref{prop::qtilde} suggests the idea that, when we consider the MoM hierarchy, we are extending the quadratic module $\cQ(\vb g)$ to $\cQ(\vb g, \pm \vb h)$, where $\vb h$ are generators of $\sqrt[\R]{\supp \cQ(\vb g)}$. We specify this idea in \Cref{lem::dual_support}, \Cref{prop:radical closed} and \Cref{thm::generic_t}, investigating the relations between the truncated parts of $\cL_d(\vb g)$. {In the following, we denote $\langle \vb h \rangle_{t} = \{\, \sum_i p_i h_i \colon p_i \in \R[\vb X], \  \deg(p_i h_i) \le t \, \}$  the vector space of polynomials in the ideal $(\vb h)$ generated in degree $\le t$}.
\begin{lemma}
\label{lem::dual_support}
    Let $J=\sqrt[\R]{\supp \cQ(\vb g)}$. If $(\vb h) \subset J$, $\deg \vb h \le t$, then $\exists d \ge t$ such that $\langle \vb h \rangle_{t} \subset \overline{\tqgen{g}{d}}$. In this case:
    \[
        \cL_d(\vb g)^{[t]} \subset \cL_t(\vb g, \pm \vb h)\subset \cL_t(\vb g),
    \]
    and in particular $\cL_d(\vb g)^{[t]} \subset \cL_t(\pm \vb h)$.
    Moreover, $\cL_{d+2k}(\vb g)^{[t+k]} \subset \cL_{t+k}(\pm \vb h)$ for all $k \in \N$.
\end{lemma}
\begin{proof} {Let $\langle \vb h \rangle_{t}$ be the vector space of polynomials in the ideal $(\vb h)$ generated in degree $\le t$}.
        By \Cref{lem::support}, $\tigen{\vb h}{t} \subset (\vb h)_{t} \subset \overline{\tqgen{g}{d}}$ for some $d \ge t$. Let $h \in \vb h$ and $f \in \R[\vb X]_{t-\deg h}$. Then $\pm fh \in \overline{\tqgen{g}{d}}$, and for $\lambda \in \cL_d(\vb g)$, we have $\braket{\lambda^{[t]}}{fh}=\braket{\lambda}{fh}=0$, i.e. $\cL_d(\vb g)^{[t]} \subset \cL_t(\vb g, \pm \vb h)$. The other inclusion $\cL_t(\vb g, \pm \vb h)\subset \cL_t(\vb g)$ follows by definition.
        
        For the second part, notice that $\tigen{\vb h}{t+k} \subset \overline{\tqgen{g}{d+2k}}$. Indeed, if $p \in \tigen{\vb h}{t+k}$ then $p = \sum_i \vb X^{\alpha(i)} p_i$, where $p_i \in \tigen{\vb h}{t} \subset \overline{\tqgen{g}{d}}$ and $\abs{\alpha(i)} \le k$. Writing $\vb X^{\alpha(i)} = (\frac{\vb X ^{\alpha(i)}+1}{2})^2-(\frac{\vb X ^{\alpha(i)}-1}{2})^2$, we deduce that $p = \sum_i (\frac{\vb X ^{\alpha(i)}+1}{2})^2 p_i + (\frac{\vb X ^{\alpha(i)}-1}{2})^2 (-p_i) \in \overline{\tqgen{g}{d+2k}}$, i.e. $\tigen{\vb h}{t+k} \subset \overline{\tqgen{g}{d+2k}}$. Then we can conclude the proof as in the first part.
\end{proof}

\begin{remark}
    \Cref{lem::dual_support} says that the MoM hierarchy $(\cL_{2d}(\vb g))_{d\in \N}$ is equivalent to the MoM hierarchy $(\cL_{2d}(\vb g, \pm \vb h))_{d\in \N}$, where $(\vb h) = \sqrt[\R]{\supp \cQ(\vb g)}$.
    \Cref{lem::dual_support} is an algebraic result, in the sense that
    $\supp \cQ(\vb g)$ may be unrelated to the geometry $\cS(\vb g)$ that $\vb g$
    defines. If some additional conditions hold (namely if we have only
    equalities, or a preordering, or a small dimension), it can however
    provide geometric characterizations. Recall that we denote $\Pi \vb g$ all the products of the $g_i$'s.
\end{remark}
\begin{corollary}
    \label{cor::tmomgeo_preordering}
    Suppose that $\cS(\vb g) \subset \cV_{\R}(\vb h)$. Then for every $t_0\ge \deg \vb h$ there exists $t_1 \ge t_0$ such that:
    \[
        \cL_{t_1}(\Pi \vb g)^{[t_0]} \subset \cL_{t_0}(\pm \vb h).
    \]
    In particular this holds when $(\vb h) = \cI(\cS(\vb g))$.
    
     Moreover, $\cL_{t_1+2k}(\vb g)^{[t_0+k]} \subset \cL_{t_0+k}(\pm \vb h)$ for all $k \in \N$.
\end{corollary}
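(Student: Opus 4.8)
The plan is to deduce the statement from \Cref{lem::dual_support}, applied not to the quadratic module $\cQ(\vb g)$ but to the preordering $\cO(\vb g) = \cQ(\Pi\vb g)$, whose support is tied to the geometry of $\cS(\vb g)$ through the Real Nullstellensatz. The point is that the hypothesis here is geometric ($\cS(\vb g)\subset \cV_{\R}(\vb h)$), whereas \Cref{lem::dual_support} needs an algebraic hypothesis of the form $(\vb h)\subset \sqrt[\R]{\supp(\,\cdot\,)}$; the bridge between the two is exactly \Cref{thm::real_null}, and this is what forces the choice $\Pi\vb g$ in the statement.

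First I would note that $\cO(\vb g)$ is a finitely generated quadratic module with generating set $\Pi\vb g$, so every earlier result — in particular \Cref{lem::support} and \Cref{lem::dual_support} — applies verbatim with $\Pi\vb g$ in place of $\vb g$. By \Cref{thm::real_null}, $\sqrt[\R]{\supp \cO(\vb g)} = \cI(\cS(\vb g))$. Next I would use the hypothesis $\cS(\vb g)\subset \cV_{\R}(\vb h)$: each $h\in\vb h$ vanishes on $\cV_{\R}(\vb h)$, hence on $\cS(\vb g)$, so $h\in\cI(\cS(\vb g))$ and therefore $(\vb h)\subset \cI(\cS(\vb g)) = \sqrt[\R]{\supp \cQ(\Pi\vb g)}$. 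Together with $\deg\vb h\le t_0$, this is precisely the hypothesis of \Cref{lem::dual_support} for the generating set $\Pi\vb g$ and degree $d=t_0$, which yields a $t_1\ge t_0$ with $\cL_{t_1}(\Pi\vb g)^{[t_0]}\subset \cL_{t_0}(\Pi\vb g,\pm\vb h)$.

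To finish, since $\cQ_{t_0}(\pm\vb h)\subset \cQ_{t_0}(\Pi\vb g,\pm\vb h)$, taking dual cones reverses the inclusion, giving $\cL_{t_0}(\Pi\vb g,\pm\vb h)\subset \cL_{t_0}(\pm\vb h)$; chaining the two inclusions proves $\cL_{t_1}(\Pi\vb g)^{[t_0]}\subset \cL_{t_0}(\pm\vb h)$, as desired. For the last assertion, if $(\vb h) = \cI(\cS(\vb g))$ then trivially $\cS(\vb g)\subset \cV_{\R}(\cI(\cS(\vb g))) = \cV_{\R}(\vb h)$, so the hypothesis is met. There is no serious obstacle in this argument; the only subtlety is to invoke \Cref{lem::dual_support} for the preordering's generators $\Pi\vb g$ rather than for $\vb g$, since it is $\supp\cO(\vb g)$ — and not $\supp\cQ(\vb g)$ — whose real radical equals $\cI(\cS(\vb g))$.
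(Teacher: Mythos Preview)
Your proof is correct and follows essentially the same approach as the paper: use the Real Nullstellensatz to convert the geometric hypothesis $\cS(\vb g)\subset\cV_{\R}(\vb h)$ into $(\vb h)\subset\sqrt[\R]{\supp\cQ(\Pi\vb g)}$, then apply \Cref{lem::dual_support} with generators $\Pi\vb g$. Your final chaining step through $\cL_{t_0}(\Pi\vb g,\pm\vb h)$ is already packaged in the ``In particular'' clause of \Cref{lem::dual_support}, so you could shorten the argument slightly, but nothing is wrong.
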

\begin{proof}
    Recall the Real Nullstellensatz, see e.g. \cite[Note~2.2.2~(vi)]{marshall_positive_2008}:
    $\cI(S(\vb g)) = \sqrt[\R]{\supp \cO(\vb g)}$. Then
    $\cS(\vb g)  \subset \cV_{\R}(\vb h)$ if and only if $\sqrt[\R]{(\vb h)} = \cI(\cV_{\R}(\vb h)) \subset \cI(\cS(\vb g))= \sqrt[\R]{\supp \cQ(\Pi \vb g)} = \sqrt[\R]{\supp \cO(\vb g)}$. We can then conclude applying \Cref{lem::dual_support}.
\end{proof}
\begin{corollary}
    \label{cor::tmomgeo_support}
    Let $Q=\cQ(\vb g)$. Suppose that $\cS(\vb g) \subset \cV_{\R}(\vb {h})$ and $\dim \frac{\R[\vb{X}]}{\supp{Q}} \le 1$.
    Then for every $t_0 \ge \deg \vb h$ there exists $t_1 \ge t_0$ such that $(\vb h)_{t_0} \subset \overline{\tqgen{g}{t_1}}$. In this case:
    \[
        \cL_{t_1}(\vb g)^{[t_0]} \subset \cL_{t_0}(\pm \vb{h}),
    \]
    and in particular this holds when $(\vb {h}) = \cI(\cS(\vb g))$.
    
     Moreover, $\cL_{t_1+2k}(\vb g)^{[t_0+k]} \subset \cL_{t_0+k}(\pm \vb h)$ for all $k \in \N$.
\end{corollary}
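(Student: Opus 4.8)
The plan is to reduce everything to \Cref{lem::dual_support}. Taking $J=\sqrt[\R]{\supp Q}$ and $d=t_0$ in that lemma, it is enough to check that $(\vb h)\subseteq\sqrt[\R]{\supp Q}$; the asserted inclusion $\cL_{t_1}(\vb g)^{[t_0]}\subseteq\cL_{t_0}(\pm\vb h)$, with $t_1$ the integer $k\ge t_0$ produced by the lemma, then follows verbatim. Now from $\cS(\vb g)\subseteq\cV_\R(\vb h)$ and the Real Nullstellensatz (\Cref{thm::real_null}) we get
\[
    (\vb h)\subseteq\cI(\cV_\R(\vb h))\subseteq\cI(\cS(\vb g))=\sqrt[\R]{\supp\cO(\vb g)},
\]
while the inclusion $\supp Q\subseteq\cI(\cS(\vb g))$ together with $\cI(\cS(\vb g))$ being real radical gives $\sqrt[\R]{\supp Q}\subseteq\cI(\cS(\vb g))$ for free. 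So the whole statement boils down to the reverse inclusion $\cI(\cS(\vb g))\subseteq\sqrt[\R]{\supp Q}$, i.e.\ to the equality $\sqrt[\R]{\supp Q}=\cI(\cS(\vb g))$. This is precisely where the hypothesis $\dim\R[\vb X]/\supp Q\le 1$ enters: for the preordering $\cO(\vb g)=\cQ(\Pi\vb g)$ this equality is the Real Nullstellensatz itself (which is why \Cref{cor::tmomgeo_preordering} needs no dimension hypothesis), and the task is to transfer it from $\cO(\vb g)$ to the possibly much thinner quadratic module $\cQ(\vb g)$.

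To prove $\sqrt[\R]{\supp Q}=\cI(\cS(\vb g))$, write $\sqrt[\R]{\supp Q}=\sqrt{\supp Q}=\p_1\cap\dots\cap\p_r$, the intersection of the minimal primes lying over $\supp Q$; by \Cref{sec::stellensatz} (see \cite[prop.~2.1.7]{marshall_positive_2008}) these are real and $Q$-convex, and $\dim\R[\vb X]/\p_i\le 1$ by hypothesis. Since $\cI(\cS(\vb g))=\cI\bigl(\overline{\cS(\vb g)}^{\,\mathrm{Zar}}\bigr)$ and $\cI(\cV_\R(\p_i))=\p_i$ (as $\p_i$ is real), it suffices to show $\cV_\R(\p_i)\subseteq\overline{\cS(\vb g)}^{\,\mathrm{Zar}}$ for each $i$. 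Fix $i$, set $R=\R[\vb X]/\p_i$ --- a finitely generated real domain with $\dim R\le 1$ in which $\cV_\R(\p_i)$ is Zariski dense --- and let $\bar Q\subseteq R$ be the image of $Q$, a finitely generated quadratic module. Using $Q$-convexity of $\p_i$ one sees $\supp\bar Q=(0)$: if $q_1,q_2\in Q$ with $\bar q_1=-\bar q_2$, then $q_1+q_2\in\p_i$, so $q_1,q_2\in\p_i$ and $\bar q_1=0$. As $\cS(\bar Q)=\cV_\R(\p_i)\cap\cS(\vb g)\subseteq\cS(\vb g)$, we are reduced to showing that $\cS(\bar Q)$ is Zariski dense in $\cV_\R(\p_i)$.

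When $\dim R=0$ this is trivial: then $R=\R$, the proper quadratic module $\bar Q$ must equal $\R_{\ge 0}$, and $\cS(\bar Q)$ is the single point $\cV_\R(\p_i)$. The remaining case $\dim R=1$ --- $R$ the coordinate ring of an irreducible real curve, $\bar Q$ a proper finitely generated quadratic module with trivial support --- is the crux, and I expect it to be the main obstacle: one must rule out that $\cS(\bar Q)$ is finite. This is a genuinely low-dimensional phenomenon, where one invokes the theory of positivity on real curves, the idea being that a finitely generated quadratic module cannot carve a curve down to a finite set without producing a nonzero element of its support, since near an isolated point of $\cS(\bar Q)$ some sum-of-squares combination of the generators must vanish to even order, i.e.\ behave like the negative of a square, which then lands in $\supp\bar Q$. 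Granting this, $\cV_\R(\supp Q)=\bigcup_i\cV_\R(\p_i)\subseteq\overline{\cS(\vb g)}^{\,\mathrm{Zar}}$, hence $\cI(\cS(\vb g))\subseteq\bigcap_i\p_i=\sqrt[\R]{\supp Q}$, and \Cref{lem::dual_support} concludes. The final assertion is the case $(\vb h)=\cI(\cS(\vb g))$: then $(\vb h)=\sqrt[\R]{\supp Q}$ and one takes $t_0$ at least the generating degree of $\cI(\cS(\vb g))$.
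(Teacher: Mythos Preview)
Your overall strategy is exactly the paper's: reduce to \Cref{lem::dual_support} via the equality $\sqrt[\R]{\supp Q}=\cI(\cS(\vb g))$, which is where the hypothesis $\dim\R[\vb X]/\supp Q\le 1$ is used. The paper obtains this equality in one line by citing \cite[cor.~7.4.2~(3)]{marshall_positive_2008}; you instead try to reprove that result from scratch.

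Your reduction is sound: passing to the minimal primes $\p_i$ over $\supp Q$, using their $Q$-convexity to get $\supp\bar Q=(0)$ in $R=\R[\vb X]/\p_i$, and reducing to the claim that a proper finitely generated quadratic module with trivial support on an irreducible real affine curve has Zariski-dense positivity set --- this is precisely the skeleton of Marshall's argument. The gap is that you do not prove the one-dimensional step; you write ``I expect it to be the main obstacle'' and ``granting this''. That step is the entire substance of the cited result and is not a formality: it requires a genuine local argument on the curve (using uniformizers at smooth points and a separate treatment of singular points; see \cite[\S~7.4]{marshall_positive_2008}). Your heuristic is also garbled as stated: a generator that changes sign at an isolated point of $\cS(\bar Q)$ vanishes there to \emph{odd} order, not even, and a single such generator produces nothing in $\supp\bar Q$; one must combine generators using the one-dimensional local structure. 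So the proposal is correct in outline but incomplete exactly where the paper defers to an external reference --- either cite \cite[cor.~7.4.2~(3)]{marshall_positive_2008} as the paper does, or supply the missing curve argument in full.
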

\begin{proof}
We prove it as \Cref{cor::tmomgeo_preordering}, using \cite[cor.~7.4.2~(3)]{marshall_positive_2008}:
\begin{equation}\label{thm::marshall}
    \dim \frac{\R[\vb X]}{\supp \cQ(\vb g)}\le 1 \Rightarrow \cI(\cS(\vb g)) = \sqrt[\R]{\supp \cQ(\vb g)}
\end{equation}
instead of the Real Nullstellensatz.
\end{proof}

We conclude this section discussing how the closure properties of truncated quadratic modules are related with strong duality for Lasserre's hierarchies. We briefly recall existing results in this direction.

\begin{theorem}[Strong duality]
\label{thm::strong_duality}
Let $Q=\cQ(\vb g)$ be a quadratic module and $f$ the objective function. Then:
\begin{enumerate}
    \item if $\supp Q = (0)$ then $\forall d$: $f^*_{\sos,d}$ is attained (i.e. $f-f^*_{\sos,d} \in \tqgen{g}{2d}$) and $f^*_{\sos,d}=f^*_{\mom,d}$ \cite[Prop.~10.5.1]{marshall_positive_2008};
    \item if for some $r\in \R$, $r^2 - \norm{\vb X}^2 \in \vb g$ then $f^*_{\sos,d}=f^*_{\mom,d}$ for all $d$ \cite{josz_strong_2016}.
\end{enumerate}
\end{theorem}

\begin{remark}
    \cite{josz_strong_2016} applies when the ball constraint $r^2 - \norm{\vb X}^2$ appears {explicitly} in the description of $S$. But if we consider a problem with MoM finite convergence and such that $\cQ(\vb g)$ is Archimedean, then we can use \cite{josz_strong_2016} to prove that we have also SoS finite convergence. Indeed, if $\cQ(\vb g)$ is Archimedean there exists $r, t$ such that $r^2 - \norm{\vb X}^2 \in \tqgen{g}{2t}$. This means that $\cQ_{2d}(\vb g, r^2 - \norm{\vb X}^2) \subset \tqgen{g}{2d+2t}$. If we define:
    \begin{itemize}
        \item $\displaystyle f^*_{\sos,d}  = \sup \big\{ \, \lambda \in \R \mid f-\lambda  \in \tqgen{g}{2d} \,\big\}$
        \item $\displaystyle f^{*'}_{\sos,d}  = \sup \big\{ \, \lambda \in \R \mid f-\lambda  \in \cQ_{2d}(\vb g , r^2 - \norm{\vb X}^2) \,\big\}$
    \end{itemize}
    and $f^*_{\mom,d}$, $f^{*'}_{\mom,d}$ the corresponding MoM relaxations, then:
    \[
        f^*_{\mom,d} \le f^{*'}_{\mom,d} = f^{*'}_{\sos,d} \le f^*_{\sos,d+t} \le f^*.
    \]
    Then finite convergence of the MoM hierarchy implies finite convergence of the SoS one. 
\end{remark}

We now prove a strong duality result, that will be useful to analyze finite convergence and exactness in \Cref{appendix}. This result, very similar to a result in \cite{marshall_optimization_2003}, generalizes the condition $\supp Q = (0)$ in \Cref{thm::strong_duality}. 

{For this generalization, we need the definition of \emph{graded basis}. We say that $\vb h$ is a graded basis of $I = (\vb h)$ if $\langle \vb h \rangle_t = I\cap \R[\vb X]_t=I_t$ for all $t\in \N$, where $\langle \vb h \rangle_t$ denotes the vector space of polynomials in $I = (\vb h)$ generated in degree $\le t$. For instance, any Groebner basis with respect to an ordering compatible with the total degree is a graded basis.}

\begin{prop} \label{prop:radical closed} 
 Let $Q=\cQ(\vb g)$ be a finitely generated quadratic module, and let $\vb h$ be a {graded basis} of $\sqrt[\R]{\supp Q}$. Then for any $d$ we have {that} $\cQ_d(\vb g , \pm \vb h) = \overline{\cQ_d(\vb g , \pm \vb h)}$ is closed.
  Moreover, {if $\cS(\vb g) \neq \emptyset$} and we consider the {relaxations $\cQ_{2d}(\vb g , \pm \vb h)$ and $\cL_{2d}(\vb g, \pm \vb h)$ (extensions of $\cQ_{2d}(\vb g)$ and $\cL_{2d}(\vb g$) using the generators of $\sqrt[\R]{\supp Q}$)},  then for any $f \in \R[\vb
  X]$, there is no duality gap: $f^{*}_{\sos,d}=f^{*}_{\mom,d}$. In this case, if $f^{*}_{\sos,d} > -\infty$, then $f^{*}_{\sos,d}$ is attained (i.e. $f-f^{*}_{\sos,d} \in \cQ_{2d}(\vb g , \pm \vb h)$).
\end{prop}
\begin{proof}
{Consider the quotient map $\phi \colon \R[\vb X]_{2d} \to \R[\vb X]_{2d} \big/ I_{2d}$. By \cite[Lem. 4.1.4]{marshall_positive_2008}, the image of $\cQ_{2d}(\vb g , \pm \vb h)$ under $\phi$ is
closed. Since $\vb h$ is a graded basis,
\[
    \phi^{-1}(\phi (\cQ_{2d}(\vb g , \pm \vb h))) = \cQ_{2d}(\vb g , \pm \vb h) + I_{2d} = \cQ_{2d}(\vb g) + \langle \vb h \rangle_{2d} +I_{2d} = \cQ_{2d}(\vb g) + \langle \vb h \rangle_{2d} = \cQ_{2d}(\vb g , \pm \vb h)
\]
and thus $\cQ_{2d}(\vb g , \pm \vb h)$ is closed as well.}
Therefore we have
$\cL_{2d}(\vb g, \pm \vb h)^{\vee}= (\cQ_{2d}(\vb g , \pm \vb h))^{\vee \vee}= \overline{\cQ_{2d}(\vb g , \pm \vb h)} = \cQ_{2d}(\vb g , \pm \vb h)$,
from which we deduce that there is {no} duality gap, by classical
convexity arguments, as follows.

{ If $f \in \R[\vb X]$ and $\cS(\vb g) \neq \emptyset$, let $H=\big\{ \, a \in \R \mid f-a  \in \cQ_{2d}(\vb g , \pm \vb h) \,\big\}$. By construction, $H$ is empty or an interval unbounded from below. 
We prove now that it is always bounded from above.
Since $\cS(\vb g) \neq \emptyset$ then $f^* < +\infty$, and for all $a > f^*$ there exists $x \in \cS(\vb g)$ such that $f(x) < a$. Then $f(x) - a < f(x) - f^*$, and thus $f - a \notin \cQ(\vb g, \pm \vb h)$ and $a\not\in H$. Therefore $H$ is bounded from above.}

{If $H = \emptyset$, then $f^{*}_{\sos,d} = \sup H = -\infty$. If $f^*_{\mom, d} > -\infty$, then $\exists M \in \R$ such that $\braket{\lambda}{f} \ge -M > -\infty$ for all $\lambda \in \cLone_{2d}(\vb g, \pm \vb h)$. Using \cite[Lem.~1.3.9]{baldi_thesis_2022}, we deduce that $\braket{\lambda}{f+M} \ge 0$ for all $\lambda \in \cL_{2d}(\vb g, \pm \vb h)$, and thus $f+M \in \cL_{2d}(\vb g , \pm \vb h)^\vee = \cQ_{2d}(\vb g , \pm \vb h)$, a contradiction to $H = \emptyset$. Therefore, $f^*_{\mom, d} = -\infty = f^*_{\sos, d}$.}

{If $H \neq \emptyset$, since $\cQ_{2d}(\vb g , \pm \vb h)$ is closed and $H$ bounded from above, $f^{*}_{\sos,d} = \sup H = \sup \big\{ \, a \in \R \mid f-a  \in \cQ_{2d}(\vb g , \pm \vb h) \,\big\}$ is attained.
If $f^{*}_{\sos,d} < f^{*}_{\mom,d}$, then $f-f^{*}_{\mom,d} \not\in
\cQ_{2d}(\vb g , \pm \vb h)$. Thus there exists a separating functional $\lambda \in
\cL_{2d}^{(1)}(\vb g, \pm \vb h)$ such
that $\braket{\lambda}{f-f^{*}_{\mom,d}}<0$, which implies that
$\braket{\lambda}{f}< f^{*}_{\mom,d}$ in contradiction with the
definition of $f^{*}_{\mom,d}$. Consequently, $f^{*}_{\sos,d}=f^{*}_{\mom,d}$.}

{Finally, if $f^*_{\sos, d} > - \infty$ then $H \neq \emptyset$, and we can conclude as before that $f^*_{\sos, d}$ is attained.}
\end{proof}

\subsection{Annihilators of truncated moment sequences}\label{ref:annihilator}

Recall that the annihilator $\ann_t(\lambda)$ is the kernel of the moment matrix of $\lambda$ in degree $\leq t$ (or of the Hankel operator).
With the characterization of $\widetilde{Q}$ we can now describe these kernels of moment matrices associated to truncated positive linear functionals.

We recall the definition of genericity in the truncated setting and equivalent characterizations.

\begin{definition}
\label{def::generic_t}
    Let $C \subset \R[\vb X]_{d}^*$ be a convex set.
    We say that $\lambda^* \in C$ is \emph{generic} in $C$ if
    $\rank H_{\lambda^*}^{\floor {\frac d 2}}=\max \{ \rank H_{\eta}^{\floor {\frac d 2}} \mid \eta \in C \}$.
\end{definition}
In particular, we will consider generic $\lambda^*$ in the following convex sets, {which are used in polynomial optimization}:
\begin{itemize}
    \item $C = \cL_{2d}(\vb g)$, the cone of positive linear functionals or feasible pseudo-moment sequences of Lasserre's moment relaxation of order $d$;
    \item $C = \cLone_{2d}(\vb g)$, the convex set defined as the section of $\cL_{2d}(\vb g)$ given by $\braket{\lambda}{1}=1$;
    \item $C = \cL_{2d}^{\min}(f;\vb g)$, the exposed face of $\cLone_{2d}(\vb g)$ defined by $\braket{\lambda}{f}=f^*_{\mom,d}$;
    \item $C = \cL_{2d}(\vb g)^{[2k]}, \cLone_{2d}(\vb g)^{[2k]}, \cL_{2d}^{\min}(f;\vb g)^{[2k]}$,
    the restrictions of the positive linear functionals of the above sets to $\R[\vb X]_{2k}$.
\end{itemize}
We will use generic linear functionals to recover the minimizers when we have the flat truncation property.
They can be characterized as follows, see \cite[Prop. 4.7]{lasserre_moment_2013} and \cite[Prop.~3.4.10]{baldi_thesis_2022}.
\begin{prop}
    \label{prop::genericity}
    Let $C$ be a convex subset of $\cL_{d}(\vb g)$ and let $\lambda \in C$. The following are equivalent:
    \begin{enumerate}
        \item $\lambda$ is generic in $C$;
        \item $\ann_{\floor {\frac d 2}} (\lambda) \subset \ann_{\floor {\frac d 2}} (\eta) \ \forall \eta \in C$;
        \item $\forall k\le d$, we have: $\rank H_{\lambda}^{\floor {\frac k 2}}=\max \{ \, \rank H_{\eta}^{\floor {\frac k 2}} \mid \eta \in C \, \}$, {i.e. $\lambda^{[k]}$ is generic in $C^{[k]}$.}
    \end{enumerate}
\end{prop}

\begin{remark}
    By \Cref{prop::genericity} notice that $\forall d' \le d$, if $\lambda^*\in \cL_{d}(\vb g)$ is generic then $(\lambda^*)^{[d']}$ is generic in $\cL_{d}(\vb g)^{[d']}$. In particular, $\ann_{\floor {\frac{d'}{2}}} (\lambda^*) \subset \ann_{\floor {\frac{d'}{2}}} (\eta) \ \forall \eta \in \cL_{d}(\vb g)$.
\end{remark}
We can show that any linear functional in the relative interior is generic.
\begin{lemma}\label{lem:generic_interior}
  Let $C$ be a convex subset of $\cL_{d}(\vb g)$. Then any linear functional $\lambda$ in the relative interior of $C \subset \cL_{d}(\vb g)$ is generic in $C$.
\end{lemma}
\begin{proof}
Let $\lambda$ be in the relative interior of $C$. For any $\lambda_1\in C$, there exists $\lambda_2\in C$  and $a_1, a_2 \in \R_{>0}$ with $a_1+ a_2 = 1$,
such that $\lambda = a_1 \lambda_1 + a_2 \lambda_2$.

Denote $k={\floor {\frac d 2}}$.
The inclusion \[ \ann_{k}(\lambda) = \ker H_{\lambda}^k = \ker H_{a_1 \lambda_1 + a_2 \lambda_2}^k \supset \ker H_{\lambda_1}^k \cap \ker H_{\lambda_2}^k = \ann_k(\lambda_1)\cap \ann_k(\lambda_2) \] is direct.
Conversely, if $f \in \ann_k(\lambda)$ then $f \star \lambda = a_1(f\star \lambda_1) + a_2 (f \star \lambda_2) =0$.
In particular, \[ 0 =\braket{f \star \lambda}{f} = a_1\braket{f \star \lambda}{f} + a_2\braket{f \star \lambda}{f} = a_1\braket{\lambda_1}{f^2} + a_2\braket{\lambda_2}{f^2} \]
Therefore $\braket{\lambda_1}{f^2} = \braket{\lambda_2}{f^2}=0$, and from \cite[Lem.~3.12]{lasserre_moment_2013}
we have $f \in  \ann_d(\lambda_1)\cap \ann_d(\lambda_2)$, proving the reverse inclusion.
We deduce that $\forall \lambda_1\in C$ we have $\ann_k(\lambda)\subset \ann_k(\lambda_1)$ , thus $\lambda$ is generic from \Cref{prop::genericity}. 
\end{proof}
As any point in the relative interior {of $C$} is generic {in C} by \Cref{lem:generic_interior}, in practice generic linear functionals can be recovered from SDP solvers based on interior point methods. Indeed, these solvers approximately follow the central path to get an approximately optimal solution (possibly using a self-dual-embedding or facial reduction) of the original SDP problem, see e.g. \cite{Halick2002} and references therein. Therefore, using an SDP solver based on interior point methods we can approximately get a point in the relative interior (see also \cite[Sec.~4.4.1]{lasserre_semidefinite_2008}).
Notice also that there are examples of cones $C \subset \cL_d(\vb g)$ where extreme points are generic in $C$, see \Cref{ex::flat_out}.

We are now ready to describe the annihilator of these generic elements.

\begin{theorem}
\label{thm::generic_t}
    If $d,t \in \N$ are big enough and $\lambda^* \in \cL_d(\vb g)$ is generic, we have $\sqrt[\R]{\supp \cQ(\vb g)}=(\ann_t (\lambda^*))$. Moreover if {$\cQ(\vb g)=\mathcal{T}(\vb g)$} is a preordering, then $(\ann_t(\lambda^*)) = {\cI}(\cS(\vb g))$.
\end{theorem}
\begin{proof}
We denote $J = \sqrt[\R]{\supp Q}$.
    Let $t\in \N$ {be} such that $J$ is generated in degree $\le t$, by the
    {graded basis}
    $\vb h= h_{1}, \ldots, h_{s}$(see above \Cref{prop:radical closed} for the definition). From \Cref{lem::support} we deduce that there exists $d \in \N$ such that $J_{2t} \subset \overline{\tqgen{\vb g}{d}}$. Let $\lambda^* \in \cL_d(\vb g)$ {be} generic.

    We first prove that $J \subset (\ann_t (\lambda^*))$. By \Cref{prop::genericity} we have $\ann_t (\lambda^*) =\bigcap_{\lambda \in \cL_d(\vb g)} \ann_t (\lambda)$. Then it is enough to prove that $J_{t} \subset \ann_t (\lambda)$ for all $\lambda \in \cL_d(\vb g)$.

    By \Cref{lem::dual_support} $\cL_d(\vb g)^{[2t]}\subset
    \cL_{2t}(\pm \vb h)\subset \tigen{\vb h}{2t}^{\perp}$. Then $\forall f \in J_{t}=\tigen{\vb h}{t},\
    \forall p \in \R[\vb X]_t, \ \forall \lambda \in \cL_d(\vb g)$,  we
    have $f\, p \in \tigen{\vb h}{2t}$ and  $\braket{\lambda^{[2t]}}{fp}=0$. This shows that $H_{\lambda}^t(f)(p)=\braket{(f \star \lambda)^{[t]}}{p}=\braket{\lambda}{fp}=0$, i.e. $f \in \ann_t (\lambda) = \ker H_{\lambda}^t$.

    Conversely, we show that $(\ann_t (\lambda^*))\subset J$ for
    $\lambda^*$ generic in $\cL_d(\vb g)$. Since $J=\supp \widetilde{Q}
    = \supp \bigcup_j \overline{\tqgen{\vb g}{j}}$ (by \Cref{prop::qtilde}) it is enough to prove that $\ann_t (\lambda^*) \subset \overline{\tqgen{g}{d}} \cap - \overline{\tqgen{g}{d}} = \supp \overline{\tqgen{\vb g}{d}}= \supp \cL_d(\vb g)^{\vee}$.

    Let $f \in \ann_{t} (\lambda^*) = \bigcap_{\lambda \in \cL_d(\vb g)} \ann_t (\lambda)$ (we use again \Cref{prop::genericity}) and let $\lambda \in  \cL_d(\vb g)$. Then $\braket{\lambda}{f}=\braket{(f\star \lambda)^{[t]}}{1}=H_{\lambda}^t(f)(1)=0$. In particular $f \in \cL_d(\vb g)^{\vee}$. We prove that $-f \in \cL_d(\vb g)^{\vee}$ in the same way. Then $f \in \cL_d(\vb g)^{\vee} \cap - \cL_d(\vb g)^{\vee} = \overline{\tqgen{g}{d}} \cap - \overline{\tqgen{g}{d}} = \supp \overline{\tqgen{\vb g}{d}}$, and finally we deduce from \Cref{def::generic_t} and \Cref{prop::qtilde} that $\ann_{t} (\lambda^*) \subset \supp \widetilde{Q}=J$.
    
    The second part follows from the first one and the Real Nullstelensatz.
\end{proof}

{\Cref{thm::generic_t} generalizes the results of \cite{lasserre_semidefinite_2008} for equations defining real algebraic varieties. In \cite{lasserre_semidefinite_2008}, in the zero-dimensional case the flat truncation criterion is used to detect equality between the ideal generated by the annihilator (or by the kernel of the moment matrix) and the real radical of the equations. As explained in \Cref{prop:flat truncation}, we cannot use flat truncation to detect equality in the positive dimensional case. Other techniques to verify that $t,d$ are big enough to generate the real radical of the support have been further investigated in \cite[Ch.~4]{baldi_thesis_2022} and \cite{baldiComputingRealRadicals2021a}.}

\Cref{thm::generic_t} shows the possibilities and the limits of MoM hierarchies.
For instance we cannot expect exactness of the MoM relaxation $\cL_{2d}(\vb g)$ for any objective function $f$ (i.e. $\cL_{2d}(\vb g)^{[k]} \subset \cM(S)^{[k]}$) if $\sqrt[\R]{\supp Q} \neq \cI(S)$: see \Cref{ex::finite_not_exact}.

\subsection{Interpolation degree and flat truncation}
\label{subsec::interpolation_bases}

In this section, we analyze the properties of moment sequences in $\cL_d(\vb g)$ when $S=\cS(\vb g)$ is finite. We will use the results in this section to study the case of finitely many minimizers in Polynomial Optimization  Problems, and in particular the flat truncation property.

Let $\Xi=\{\xi_{1}, \ldots, \xi_{r}\}\in \C^{n}$ be a finite set of (complex) points 
and
let $\cI(\Xi)=\{ \, p\in \C[X]\mid p(\xi_{i})=0 \ \forall i \in {1,\ldots,r}\,\}$ be the complex vanishing ideal of the points $\Xi$. 
It is well known {(see e.g. \cite{eisenbud_geometry_2005})} that $\Xi$ admits a family of
interpolator polynomials $(u_{i})\subset \C[\vb X]$ 
such that $u_{i}(\xi_{j})=\delta_{i,j}$, which form a basis 
$\C[\vb{X}]/\cI(\Xi)$.
The minimal degree of a family of
interpolator polynomials is called the {\em interpolation degree} of $\Xi$ and denoted $\intdeg(\Xi)$.

A classical result states that $\intdeg(\Xi)+1$ is 
the {\em Castelnuovo-Mumford regularity} of the ideal $I(\Xi)$
(see \cite[Th. 4.1]{eisenbud_geometry_2005}).
This interpolation degree $\intdeg(\Xi)$ is the minimal degree of a basis of $\C[\vb{X}]/\cI(\Xi)$. It is also the minimal degree of a monomial basis $B$ of $\R[\vb{X}]/\cI(\Xi)$. Such a minimal degree basis $B$ can be chosen so that it is {a monomial basis} stable by monomial division\footnote{{i.e. if $m\, m'\in B$ then $m$ and $m' \in B$}}.
Moreover, the ideal $\cI(\Xi)$ has a graded (resp. Grobner, resp. border) basis of degree $\intdeg(\Xi)+1$ (see e.g. \cite{bayer_criterion_1987}).

The next result shows that (positive) moment sequences orthogonal to the vanishing ideal of the points, truncated above twice the interpolation degree are {represented by} (positive) measures.
Recall that if $I$ is an ideal we denote $I_t = I \cap \R[\vb X]_t$ and $\langle \, \rangle$ denotes the linear span of a set.
\begin{prop} \label{prop:dual eval}
  Let $\Xi=\{\xi_{1},\ldots,\xi_{r}\}\subset \R^{n}$, $I=\cI(\Xi)$ its real vanishing ideal and let $\intdeg=\intdeg(\Xi)$ the interpolation degree of $\Xi$. Let $t\ge\intdeg$ and $\lambda \in \R[\vb X]^*_{t}$. 
  Then $\lambda\in I_{t}^{\perp}$ if and only if $\lambda\in \tigen{\eval_{\xi_{1}}^{[t]}, \ldots, \eval_{\xi_{r}}^{[t]}}{}$. {Moreover if $t\ge 2\intdeg$ and $\lambda\in \cL_{t}(I_{t})$, then
$\lambda\in \cone(\eval_{\xi_{1}}^{[t]}, \ldots, \eval_{\xi_{r}}^{[t]})$ and $\rank H_{\lambda}^{\floor{\frac{t}{2}}} \le r$}.
\end{prop}
\begin{proof}
  Let $u_{1}, \ldots, u_{r}\in \R[\vb X]_{t}$ be interpolation polynomials of degree $\intdeg \le t$. Consider the sequence of vector space maps:
  \begin{eqnarray*}
    0 \rightarrow I_{t} \rightarrow \R[\vb X]_{t}& \stackrel{\psi}{\longrightarrow} & \tigen{u_{1}, \ldots, u_{r}}{} \rightarrow 0\\
    p & \mapsto &  \sum_{i=1}^{r} p(\xi_{i}) u_{i},
  \end{eqnarray*}
which is exact since $\ker \psi= \{p \in \R[\vb X]_{t}\mid p(\xi_{i})=0 \}=I_{t}$. Therefore we have
  $\R[\vb X]_{t} = \tigen{u_{{1}}, \ldots, u_{{r}}}{} \oplus I_{t}$.

  Let $\lambda\in I_{t}^{\perp}$. Then $\tilde{\lambda}=\lambda -\sum_{i=1}^{r}\braket{\lambda}{u_{i}}   \eval_{\xi_{i}}^{[t]}\in I_{t}^{\perp}$ is such that $\braket{\tilde{\lambda}}{u_{i}}=0$ for $i=1, \ldots, r$. Thus, $\tilde{\lambda}\in\tigen{u_{{1}}, \ldots, u_{{r}}}{}^{\perp}\cap I_{t}^{\perp} =(\tigen{u_{{1}}, \ldots, u_{{r}}}{} \oplus I_{t})^{\perp} = \R[\vb X]_{t}^{\perp}$, i.e. $\tilde{\lambda}=0$ showing that $I_{t}^{\perp}\subset \tigen{\eval_{\xi_{1}}^{[t]}, \ldots, \eval_{\xi_{r}}^{[t]}}{}$. The reverse inclusion is direct since $I_{t}$ is the space of polynomials of degree $\le t$ vanishing at $\xi_{i}$ for $i=1, \ldots, r$.

{Assume now that $t \ge 2\intdeg$ and $\lambda\in \cL_{t}(I_{t})$.
Then $\lambda\in I_{t}^{\perp}$ and
$\braket{{\lambda}}{p^{2}}\ge 0$ for any $p^2\in \R[\vb X]_t$. By the previous analysis,
\[
  \lambda = \sum_{i=1}^{r} \omega_{i} \eval_{\xi_{i}}^{[t]}
\]
As $0\le\braket{{\lambda}}{u_{i}^{2}}= \omega_{i}$ for $i=1, \ldots, r$, we deduce that $\lambda\in \cone(\eval_{\xi_{1}}^{[t]}, \ldots, \eval_{\xi_{r}}^{[t]})$.}

{Let $s = {\floor{\frac{t}{2}}}$. We verify that the image of $H_{\lambda}^{s}: p \in \R[\vb X]_{s} \mapsto \sum_{i=1}^r \omega_{i} p(\xi_i)\,  \eval_{\xi_{i}}^{[s]}$ is {included in} $\tigen{\eval_{\xi_{1}}^{[s]}, \ldots, \eval_{\xi_{r}}^{[s]}}{}$, computing  $H_\lambda^s(u_i)$ for $i=1, \ldots, r$. Thus $\rank H_\lambda^s \le \dim \tigen{\eval_{\xi_{1}}^{[s]}, \ldots, \eval_{\xi_{r}}^{[s]}}{} = r$ since $(\eval_{\xi_{i}}^{[s]})_{i=1,\dots,r}$ is the dual basis of $(u_i)_{i=1,\dots,r}$.}
\end{proof}

We describe now a property, known as \emph{flat truncation}, which allows to test effectively if truncated moment sequences are {represented by} sums of point evaluations.
\begin{definition}[{Flat truncation, \cite[Th.~6.18]{laurent_sums_2009}, \cite[Sec.~1.2]{nie_certifying_2013}}]\label{def:flat truncation} Let $d_{\vb g}=\lceil \frac 1 2 \max_{i=1,\ldots,s} \deg (g_i) \rceil$. We say that the \emph{flat truncation} property holds for $\lambda \in \cL_d(\vb g)$ at degree $t$ if $ t\le \frac d 2 - d_{\vb g}$ and
\begin{equation}\label{eq:rank flat}
    \rank H^{t}_\lambda = \rank H^{t+d_{\vb g}}_{\lambda}.
\end{equation}
\end{definition}
{This definition coincides with the definition of flat truncation given in \cite{nie_certifying_2013}, and previously exploited in \cite{laurent_sums_2009}. The flat truncation is a modification of the flat extension criterion by Curto and Fialkov (see e.g. \cite{c73aa9e9-d41f-3dde-b8fc-1b7ea47fd432}), and is very useful in polynomial optimization to certify the finite convergence and to extract the minimizers.}

{We now investigate in more detail this rank condition for the moment matrix of $\lambda \in \cL_d(\vb g)$. In the following lemma, we reprove and extend known properties associated with flat truncation (see e.g. \cite[Th.~5.33]{laurent_sums_2009} and references therein). Our improvements are the characterization of the annihilator (or, of the kernel of the moment matrix) as the full truncated vanishing ideal of the points, and a truncation degree for which all the linear functionals coincide with sums of point evaluations.}

\begin{lemma}\label{lem:flat diracs}
If $\lambda \in \cL_d(\vb g)$ is such that $\rank H^{t}_{\lambda} = \rank H^{t+s}_{\lambda} = r$ with $t+1 \le t +s \le \frac{d}{2}$, then 
$$
\lambda^{[t + s + \frac d 2 ]} = \omega_1 \eval_{\xi_1}^{[t + s + \frac d 2 ]}+ \dots + \omega_r \eval_{\xi_r}^{[t + s + \frac d 2 ]}
$$
for some points $\xi_i\in \R^n$ and weights $\omega_i>0$, $i=1,\dots,r$. Denoting $\Xi=\{\xi_1,\dots ,\xi_r \}$, we also have $\ann_{t+s} (\lambda)=\cI(\Xi)_{t+s}$ and $\cV(\ann_{t+s} (\lambda))=\Xi$ (or, in other words, $(\ann_{t+s} (\lambda)) = \cI(\Xi)$).

Moreover, if $t\le  \frac d 2 + s -\deg({\vb g})$, where $\deg (\vb g) = \max_{i=1,\dots, s} \deg(g_i)$, the inclusion $\Xi \subset \cS(\vb g)$ holds true.
\end{lemma}
\begin{proof}
    From \cite[Th.~5.29]{laurent_sums_2009}, there exists unique $\Xi=\{\xi_1,\dots ,\xi_r \}\subset \R^n$ and $\omega_1,\dots , \omega_r > 0$ such that $\lambda^{[2(t+s)]} = \omega_1 \eval_{\xi_1}^{[2(t+s)]}+ \dots + \omega_r \eval_{\xi_r}^{[2(t+s)]}$,  $(\ann_{t+s} (\lambda))=\cI(\Xi)$ and $\cV(\ann_{t+s} (\lambda))=\Xi$. In particular $(\ann_{t+s} (\lambda))$ is a zero-dimensional ideal 
    and $\ann_{t+s}(\lambda)\subset I(\Xi)_{t+s}$. Conversely, for any $h\in I(\Xi)_{t+s}$, we have
    \[
         \braket{\lambda}{h^2} =  \braket{\lambda^{[2(t+s)]}}{h^2} = \sum_{i=1}^{r} \omega_i 
         \braket{ \eval_{\xi_i}^{[2(t+s)]}}{h^2} = \sum_{i=1}^{r} \omega_i h^2(\xi_i) = 0.
    \]
    Thus $h \in \ann_{t+s}(\lambda)$ (see \cite[Lem.~3.12]{lasserre_moment_2013}) and $I(\Xi)_{t+s}= \ann_{t+s}(\lambda)$. 
    
    As $\rank H^{t}_{\lambda} = \rank H^{t+1}_{\lambda} = r$, we deduce from above, that $(\ann_{t+1}(\lambda))=I(\Xi)$ is generated in degree $\le t+1$ and that $\theta(\Xi) \le t$. Therefore $\Xi$ has interpolator polynomials $u_1, \dots, u_r$
    of degree $\le t$.
    
    Let us show that the description of $\lambda$  on polynomials of degree $\le 2(t+s)$, can be extended to higher degree. 
    For any $h\in \ann_{t+s}(\lambda)= I(\Xi)_{s+t}$, i.e. such that  $\braket{\lambda}{h^2}=0$, and any $p \in \R[\vb X]_{\frac{d}{2}}$ we have $\braket{\lambda}{h p} = 0$. This shows that $\lambda \in (\cI(\Xi)_{t+s+\frac{d}{2}})^{\perp}$. We deduce from \Cref{prop:dual eval} that $\lambda^{[t + s+ \frac d 2 ]} \in \cone(\eval_{\xi_1},\ldots,\eval_{\xi_r})^{[t + s + \frac d 2 ]}$. This implies that $\lambda^{[t + s + \frac d 2 ]} = \omega_1 \eval_{\xi_1}^{[t + s + \frac d 2 ]}+ \dots + \omega_r \eval_{\xi_r}^{[t + s + \frac d 2 ]}$, evaluating $\braket{\lambda}{u_i}= \braket{\lambda^{[t+ s+\frac d 2]}}{u_i}=\omega_i$ at the interpolator polynomials $u_1, \dots u_r$ of $\Xi$ of degree $\le t$.
    
    We show now that $\Xi= \{ \xi_1,\dots ,\xi_r \} \subset {\cS(\vb g)}$ if $t\le  \frac d 2 + s -\deg({\vb g})$. For $i=1, \dots, r$ and $j=1, \dots, m$ the polynomial $u_i^2 g_j$ has degree $\le 2 t + \deg({\vb g}) \le t + s + \frac d 2$. Then we obtain:
    \[
        0 \le \braket{\lambda}{u_i^2 g_j} = \braket{\lambda^{[t + s + \frac d 2 ]}}{u_i^2 g_j} = \braket{\omega_1 \eval_{\xi_1}^{[t + s + \frac d 2 ]}+ \dots + \omega_r \eval_{\xi_r}^{[t + s + \frac d 2 ]}}{u_i^2 g_j} = {\omega_i}g_j(\xi_i),
    \]
    showing that $g_j(\xi_i) \ge 0$ for all $i$ and $j$, i.e. $\Xi \subset \cS(\vb g)$.
\end{proof}

\begin{remark}
    \Cref{lem:flat diracs} can be used to test flat truncation in a simpler way when $d$ is big, as we explain in the following. Assume for simplicity that $2 d_{\vb g} = \deg (\vb g)$. Then, if  $\rank H^{t}_{\lambda} = \rank H^{t+s}_{\lambda}$ with $t\le  \frac d 2 + s -\deg({\vb g})$, we have $2(t+ d_{\vb g}) = 2t + \deg(\vb g) \le t + s + \frac{d}{2} $. Then from \Cref{lem:flat diracs} we deduce that $\lambda$ restricted to polynomials of degree $\le 2(t+ d_{\vb g})$ is equal to a sum of evaluations at points of $S$ with positive weights, and the flat truncation is satisfied: $\rank H_{\lambda}^t = \rank H_{\lambda}^{t+d_{\vb g}}$. In particular, when $s=1$ and $d \ge 2t-2+2\deg(\vb g)$, $\rank H^{t}_{\lambda} = \rank H^{t+1}_{\lambda}$ implies $\rank H_{\lambda}^t = \rank H_{\lambda}^{t+d_{\vb g}}$.
\end{remark}

We now show that we can use flat truncation for generic linear functionals to describe semialgebraic sets with a finite number of points. {Results similar to \Cref{prop:flat truncation} and \Cref{thm::fin_set_qua} have been already studied in \cite{lasserre_semidefinite_2008, lasserre_moment_2013}: we discuss in detail the differences with previous results after  \Cref{thm::fin_set_qua}.}

\begin{theorem}\label{prop:flat truncation}
If a positive linear functional $\lambda^* \in \cL_{d}(\vb g)$ is such that $(\lambda^*)^{[2(t+d_{\vb g})]}$ is generic in $\cL_d(\vb g)^{[2(t+d_{\vb g})]}$ (that is  $\ann_{t+d_{\vb g}}(\lambda^*) \subset \ann_{t+d_{\vb g}}(\lambda)$ for all $\lambda \in \cL_{d}(\vb g)^{{[2(t+d_{\vb g})]}}$) and $\lambda^*$ satisfies the flat truncation property at degree $t \le \frac{d}{2} - d_{\vb g}$, then:
\begin{enumerate}
    \item $S=\cS(\vb g)=\{\xi_1,\dots ,\xi_r\}$ is non-empty and finite;
    \item $\displaystyle \cL_{d}(\vb g)^{[t + d_{\vb g} + \frac d 2 ]} = \cone (\eval_{\xi_1},\dots,\eval_{\xi_r})^{[t + d_{\vb g} +\frac d 2 ]}$;
    \item $t \ge \theta(\xi_1,\dots ,\xi_r)$ and $\ann_{t+1} (\lambda^*)=\cI(\xi_1,\dots ,\xi_r)_{t+1}= \cI(S)_{t+1}$ is the vanishing ideal of $S$ truncated in degree $t+1$.
    \item $\cI(S)_{2(t+d_{\vb g})} \subset \overline{\tqgen{g}{d}}$ and $(\ann_{t+1} (\lambda^*))= \sqrt[\R]{\supp \cQ(\vb g)} =\cI(S)$ (in particular, this ideal is zero-dimensional).
\end{enumerate}
\end{theorem}
\begin{proof}
    Let $\lambda^* \in \cL_d(\vb g)$ be such that $(\lambda^*)^{[2(t+d_{\vb g})]}$ is generic in $\cL_d(\vb g)^{[2(t+d_{\vb g})]}$, and assume that $\rank H^{t}_{\lambda^*} = \rank H^{t+d_{\vb g}}_{\lambda^*}$ with $t\le  \frac d 2 - d_{\vb g}$.
    By \Cref{lem:flat diracs} applied with $s = d_{\vb g}$, 
    $$
    (\lambda^{*})^{[t + d_{\vb g} + \frac d 2 ]} = \omega_1 \eval_{\xi_1}^{[t + d_{\vb g} + \frac d 2 ]}+ \dots + \omega_r \eval_{\xi_r}^{[t + d_{\vb g} + \frac d 2 ]}
    $$ 
    with $\omega_i>0$, $\Xi=\{\xi_1,\dots ,\xi_r \}\subset {\cS(\vb g)}$, $\ann_{t+1}(\lambda^*)=I(\Xi)_{t+1}$ and $(\ann_{t+1}(\lambda^*))=I(\Xi)$.
    {This implies in particular that $t+1 \ge \intdeg=\intdeg(\Xi)$, the interpolator degree of $\Xi$.}
    
    Let $\vb h = h_1, \dots , h_m\subset \ann_{t+1}(\lambda^*)$ be a {graded basis} of $I(\Xi)$ of degree $\le t+1$.
    As $(\lambda^*)^{[2(t+d_{\vb g})]}$ is generic in $\cL_d(\vb g)^{[2(t+d_{\vb g})]}$, for any $\lambda \in \cL_d(\vb g)$ we have $\ann_{t+d_{\vb g}}(\lambda^*)\subset \ann_{t+d_{\vb g}}(\lambda)$ by \Cref{prop::genericity}, and 
    $\braket{\lambda}{h_i^2}=0$.
    Then for any $p \in \R[\vb X]_{d_{\vb g} + \frac{d}{2}}$ we have $\braket{\lambda}{h_i p} = 0$, proving that $\lambda \in (\vb h)^{\perp}_{t+d_{\vb g}+\frac{d}{2}}=(\cI(\Xi)_{t+d_{\vb g}+\frac{d}{2}})^{\perp}$, i.e. $\cL_d(\vb g)^{[t+d_{\vb g}+\frac{d}{2}]} \subset (\cI(\Xi)_{t+d_{\vb g}+\frac{d}{2}})^{\perp}$. 
    
    {This also means that $\cL_d(\vb g)^{[t+d_{\vb g}+\frac{d}{2}]} \subset \cL_{[t+d_{\vb g}+\frac{d}{2}]}(\cI(\Xi)_{t+d_{\vb g}+\frac{d}{2}})$. As $t+d_{\vb g}+\frac{d}{2}\ge 2t+2 \ge 2\theta$}, we deduce from \Cref{prop:dual eval} that $\lambda^{[t + d_{\vb g} +\frac d 2 ]} \in \cone(\eval_{\xi_1},\ldots,\eval_{\xi_r})^{[t + d_{\vb g} + \frac d 2 ]}$. 
This shows that  
$
\cL_d(\vb g)^{[ t + d_{\vb g} + \frac d 2]} \subset \cone(\eval_{\xi_1},\ldots,\eval_{\xi_r})^{[t + d_{\vb g} + \frac d 2 ]}.
$
On the other hand the inclusion $\cL_d(\vb g)^{[ t + d_{\vb g}+ \frac d 2]} \supset \cone(\eval_{\xi_1},\ldots,\eval_{\xi_r})^{[t + d_{\vb g} + \frac d 2 ]}$ holds true since $\Xi \subset S$. Therefore $$
\cL_d(\vb g)^{[ t + d_{\vb g} + \frac d 2]} = \cone(\eval_{\xi_1},\ldots,\eval_{\xi_r})^{[t + d_{\vb g} + \frac d 2 ]}.
$$

Let us show  that $\Xi=S$. For $\zeta \in S$ we have $\eval_{\zeta}^{[t +  d_{\vb g} + \frac d 2]}\in \cL_d(\vb g)^{[t + d_{\vb g} +\frac d 2 ]}\subset (\vb h)^{\perp}_{t + d_{\vb g} +\frac d 2 }$, and thus
for $i=1, \ldots, m$, $\braket{\eval_{\zeta}}{h_i} = h_i(\zeta) = 0$. This shows that $\zeta$ is a root of $\vb h$ and thus $\zeta \in \Xi$.
We conclude that $\Xi=\{\xi_{1}, \ldots, \xi_{r}\}= S$.

The inclusion $\cI(S)_{2(t+d_{\vb g})} \subset \overline{\tqgen{g}{d}}$ follows from $\cL_d(\vb g)^{[t+d_{\vb g} +\frac{d}{2}]} \subset (\vb h)_{t+ d_{\vb g}+\frac{d}{2}}^{\perp}$. {Indeed,} $2(t+d_{\vb g}) \le t+d_{\vb g}+\frac{d}{2}$ and thus $\cL_d(\vb g)^{[2(t+d_{\vb g})]} \subset (\vb h)_{2(t+d_{\vb g})}^{\perp}$. Now notice that $(\cL_d(\vb g)^{[2(t+d_{\vb g})]})^{\vee} \subset \overline{\tqgen{g}{d}}$, using convex duality. Therefore dualizing $\cL_d(\vb g)^{[2(t+d_{\vb g})]} \subset (\vb h)_{{2(t+d_{\vb g})}}^{\perp}$ we obtain the desired inclusion. {Moreover,} $\cI(S)_{2(t+d_{\vb g})} \subset \overline{\tqgen{g}{d}} \cap - \overline{\tqgen{g}{d}} \subset \supp \widetilde{Q} = \sqrt[\R]{\supp Q}$, by \Cref{prop::qtilde}, and finally:
\[
    (\ann_{t+1}(\lambda^*)) = \cI(S) = (\cI(S)_{2(t+d_{\vb g})}) \subset \sqrt[\R]{\supp \cQ(\vb g)} \subset \sqrt[\R]{\supp \cO(\vb g)} = \cI(S),
\]
where the last equality is the Real Nullstellenstatz, see e.g. \cite[Note 2.2.2 (vi)]{marshall_positive_2008}. This shows that $(\ann_{t+1} (\lambda^*))= \sqrt[\R]{\supp \cQ(\vb g)} =\cI(S)$, concluding the proof.
\end{proof}

This theorem tells us that if the flat truncation property holds at degree $t\le \frac d 2 -d_{\vb g}$ for a generic element, then any linear functional in $\cL_d(\vb g)$, truncated in degree $t + \frac{d}{2} + d_{\vb g}$, coincides with a positive measure supported on $S=\{\xi_1,\dots ,\xi_r\}$.

In the following theorem we show that when $\supp(Q)$ is a
zero-dimensional ideal (and thus $S$ is finite), the flat truncation is satisfied {for all the positive linear functionals (and thus in particular for generic ones).}

\begin{theorem}
\label{thm::fin_set_qua}
    Suppose that $\dim \frac{\R[\vb X]}{\supp \cQ(\vb g)} = 0$. Then $S = \cS(\vb g)$ is finite and there exists $d\ge 2(\intdeg+d_{\vb g})$ such that $\cI(S)_{2(\intdeg+d_{\vb g})} \subset \supp \overline{\tqgen{g}{d}}$, where $\intdeg=\intdeg(S)$ is the interpolation degree of $S$, and for any $\lambda \in \cL_d(\vb g)$ the flat truncation property holds at degree $\intdeg$.
\end{theorem}
\begin{proof}
    Let $I=\supp \cQ(\vb g)$ and $J=\sqrt[\R]{\supp \cQ(\vb g)}$, and recall that $J = \sqrt{I}$, see e.g. \cite{marshall_positive_2008}. We deduce that $\dim \frac{\R[\vb X]}{J} = \dim \frac{\R[\vb X]}{I} = 0$ and by  \eqref{thm::marshall} we have
    $\cI(\cS(\vb g)) = \sqrt[\R]{\supp \cQ(\vb g)} = J$. Then
    $\cV_{\R}(J)=\cV_{\R}(\cI(\cS(\vb g))) = \cS(\vb g) =\{
    \xi_1,\dots ,\xi_r \}$ is finite.

    We choose a {graded basis} $\vb h$ of $J$ with $\deg \vb h \le \intdeg+1$. 
    By \Cref{cor::tmomgeo_support}, there exists $d\in \N$ such that $\cI(S)_{2(\intdeg+d_{\vb g})} \subset \supp \overline{\tqgen{g}{d}}$. From \Cref{cor::tmomgeo_support} and \Cref{prop:dual eval} we deduce that positive linear functionals in $\cL_d(\vb g)$ restricted to degree $\le 2(\intdeg+d_{\vb g})$ are conical sums of evaluations at $\xi_1,\dots,\xi_r$:
    \[
        \cL_{d}(\vb g)^{[2(\intdeg+d_{\vb g})]}\subset\cL_{2(\intdeg+d_{\vb g})}(\pm \vb h)=\cL_{2(\intdeg+d_{\vb g})}(J_{2(\intdeg+d_{\vb g})}) =\cone (\eval_{\xi_1},\dots,\eval_{\xi_r})^{[2(\intdeg+d_{\vb g})]},
    \]
    and for all $\lambda \in \cL_{d}(\vb g)$, we have $\rank H_{\lambda}^{\intdeg} = \rank H_{\lambda}^{ \intdeg+d_{\vb g}}$, {since
    \[
        \lambda^{[2(\intdeg+d_{\vb g})]} = \sum_{j=1}^{\rank H_\lambda^\intdeg} {\omega_{i_j} \, \eval_{\xi_{i_j}}^{[2(\intdeg+d_{\vb g})]}}
    \]
    with $\omega_{i_j} > 0$.}
\end{proof}
\Cref{thm::fin_set_qua} says that if $\dim \frac{\R[\vb X]}{\supp  \cQ(\vb g)} = 0$ then the minimal order for which we have flat truncation is not bigger than $d\ge 2(\intdeg+d_{\vb g})$ such that $\cI(S)_{2(\intdeg+d_{\vb g})} \subset {\supp}\, \overline{\tqgen{g}{d}}$. 
This degree is related to the minimal $d$ for which $\cI(S)=\sqrt[\R]{\supp \cQ(\vb g)}$
is generated by $\supp \overline{\tqgen{g}{d}}$, that is, 
the minimal degree $d$ such that $I(S)_{\intdeg+d_{\vb g}} \subset \ann_{\frac d 2}(\lambda^*)$ for a generic $\lambda^*\in \cL_d(\vb g)$. Moreover, as in the remark after \Cref{lem:flat diracs}, we can replace $\intdeg+d_{\vb g}$ with {$\intdeg+1$} if $d$ is big enough.
 
{\Cref{prop:flat truncation} and \Cref{thm::fin_set_qua}} show that if $\dim \frac{\R[\vb X]}{\supp \cQ(\vb g)} = 0$ then $S$ is a finite set of points and for a high enough degree $d$, all moment sequences in $\cL_d(\vb g)$, truncated in degree twice the interpolation degree are {represented by} a weighted sum of Dirac measures at these points.
In particular, it is possible to recover all the points in $S$ from a generic truncated moment sequence, see \cite{Henrion05detectingglobal}, 
\cite{abrilbucero:hal-00981546} and
\cite{mourrain_polynomialexponential_2018}.

{Results related to \Cref{thm::fin_set_qua} and \Cref{prop:flat truncation} were obtained in \cite{lasserre_semidefinite_2008} and
\cite{lasserre_moment_2013}, where they focus on the case of equations
$\vb h$ defining a finite real variety. They prove that, for $d$
big enough and for every positive linear functional $\lambda \in
\cL_{2d}(\vb \pm \vb h)$, the flat truncation property holds for $H_{\lambda}^d$, and that
$\lambda^{[2d]}$ is a conic combination of evaluations at the points of $\cV_{\R}(\vb h)$. This can be deduced from
\Cref{thm::fin_set_qua}, since in the case where $\cV_{\R}(\vb h)=\{\xi_1,\dots, \xi_r\}$ is non-empty and finite, $\dim \frac{\R[\vb X]}{\supp \cQ(\pm \vb h)}=0$.  \Cref{prop:flat truncation} generalizes \cite[Prop.~4.5]{lasserre_semidefinite_2008}, since our result does not assume any explicit equality constraints, emphasizes the role of the support of the quadratic module and the interpolation degree, and provides better degree bounds for the flat truncation degree. \Cref{thm::fin_set_qua} analogously generalizes \cite[Prop.~4.6]{lasserre_semidefinite_2008}. These generalizations are crucial for the characterization of flat truncation in \Cref{thm::flat_iff}, and for \Cref{cor::generic_exactness}.}

{In \cite[Rem.~4.9]{lasserre_semidefinite_2008} the authors also mention that their results can be proved for a preordering defining a finite semialgebraic set. This result can be deduced directly from \Cref{thm::fin_set_qua}, since when $S = S(\vb g)=\{\xi_1,\dots
   ,\xi_r\}$ is non-empty and finite, we have by the Real Nullstellensatz , $\dim \frac{\R[\vb X]}{\supp \cO(\vb g)} =\dim \frac{\R[\vb X]}{\sqrt[\R]{\supp \cO(\vb g)}}= \dim \frac{\R[\vb X]}{\cI(\cS(\vb g))} = 0$.}

But \Cref{thm::fin_set_qua} is even more general, as shown
by the following example of a quadratic module, whose support is {zero-dimensional}, but it is not a preordering.
\begin{example}[{\cite[Ex.~ 7.4.5~(1)]{marshall_positive_2008}}]
    \label{ex::supp_non_pre}
    Let $Q = \cQ(X,Y,1-X,1-Y, -X^4, -Y^4) \subset \R[X, Y]$. In this
    case $\supp Q$, which contains $X^4$ and $Y^4$, is zero-dimensional and $Q$ is not a
    preordering  since $XY \notin Q$. \Cref{thm::fin_set_qua} applies in this case, but the results cannot be deduced from \cite{lasserre_semidefinite_2008} or
\cite{lasserre_moment_2013}.
\end{example}

As we will see, in Polynomial Optimization {Problems}, flat truncation implies MoM exactness and thus finite convergence. Moreover, it allows extracting the minimizers from an optimal sequence.
\section{Flat truncation in  polynomial optimization problems}
\label{sec::flat_tru}
In this section, we analyze when flat truncation occurs in the  Polynomial Optimization Problem, which consists of
minimizing $f\in \R[\vb X]$ on the basic semialgebraic set $S=\cS(\vb
g)$ where $\vb g = \{ \, g_{1}, \ldots, g_{s} \, \}$ are finitely many real polynomials. 

Recall that we denote $f^*$ the minimum of $f$ on $S$. In the following, we will consider the semialgebraic set $S^{\min} = \cS(\vb g, \pm (f - f^*)) = \cS(\vb g) \cap \{ x \in \R^n \mid f(x) = f^* \}$ and assume that it is nonempty. 

{The interest in flat truncation properties in polynomial optimization arises from two aspects. First, in such a case we can certify finite convergence and exactness of the MoM hierarchy, see \Cref{thm::flat_implies_exact}. Second, if we have a flat moment matrix we can recover the global minimizers of the problem. For more details on the algorithm to extract the minimizers, we refer the reader to \cite{Henrion05detectingglobal, abrilbucero:hal-00981546, mourrain_polynomialexponential_2018}}.

In the following, we adapt the results of \Cref{sec:Geometry} to the quadratic module $\cQ(\vb g, \pm (f - f^*))$
defining the minimizers $S^{\min}$.

\subsection{Flat truncation degree}\label{sec:flat truncation degree}
{In the following, we aim at analyzing the degree at which flat truncation holds using the interpolation degree, and we provide the first necessary and sufficient condition for flat truncation (\Cref{thm::flat_iff}). But first, we describe the consequences of flat truncation for generic minimizing linear functionals in \Cref{thm::flat_implies_exact}. This theorem extends related results, e.g. in \cite{laurent_sums_2009, nie_certifying_2013}: we perform a detailed comparison with the existing literature after the proof.}
\begin{theorem}
\label{thm::flat_implies_exact}
Consider the problem of minimizing $f$ on $\cS(\vb g)$. If the flat truncation property holds for a generic $\lambda^*\in \cL_{2d}^{\min}(f;\vb g)$ at a degree $t$ such that $\deg(f)-d_{\vb g}-d \le t \le  d- d_{\vb g}$, then:
\begin{enumerate}
    \item $f^* = f^*_{\mom, d}$ (i.e. we have MoM finite convergence);
    \item the set of minimizers $S^{\min}=\{\xi_1,\dots ,\xi_r\}$ is non-empty and finite;
    \item $\ker H_{\lambda^*}^{t+1}=\ann_{t+1}(\lambda^*) = \cI(S^{\min})_{t+1}$ (i.e. the kernel of the truncated moment matrix equals the truncated ideal of the minimizers) and $\cV(\ann_{t+1}(\lambda^*)) = S^{\min}$;
    \item $\displaystyle \cL_{2d}^{\min}(f; \vb g)^{[t + d_{\vb g} +  d  ]} = \cone (\eval_{\xi_1},\dots,\eval_{\xi_r})^{[t + d_{\vb g} + d ]}$ (i.e. all the minimizing truncated feasible moment sequences are conic sums of evaluations at the minimizers) {and all $\lambda \in \cL_{2d}^{\min}(f;\vb g)$ have flat truncation at degree $t$};
    \item the MoM hierarchy is exact.
\end{enumerate}
\end{theorem}
\begin{proof}
Let $\lambda^*\in \cL_{2d}^{\min}(f; \vb g)$ be generic such that $\rank H^{t}_{\lambda^*}= \rank H^{t+d_{\vb g
}}_{\lambda^*}$ with 
$\deg(f)\le t + d_{\vb g}+d$ and $t + d_{\vb g} \le d$. Then by \Cref{lem:flat diracs}, $(\lambda^*)^{[t+ d_{\vb g} + d]}=\sum_{i=1}^{r} \omega_i \eval_{\xi_i}^{[t + d_{\vb g} + d]}$ with $\xi_i\in S = \cS(\vb g)$, $\omega_i > 0$, $\ann_{t+ 1}(\lambda^*) = \cI(\xi_1, \dots \xi_r)_{t+ 1} = \cI(\Xi)_{t+ 1} $ and $\cV(\ann_{t+1}(\lambda^*)) = \Xi$. Notice that $f(\xi_i) \ge f^*$ since $\xi_i \in S$. 

We show now that $S^{\min} = \Xi$.
As $\braket{\lambda^*}{1} = 1$ we have $\sum_{i=1}^{r} \omega_i = 1$. Moreover $f^*_{\mom, d} = \braket{\lambda^*}{f} \le f^*$ and since $\deg(f)\le t + d_{\vb g}+d$ we obtain:
$$
f^* \ge \braket{\lambda^*}{f} = \braket{(\lambda^*)^{[t+ d_{\vb g} +d]}}{f} = \sum_{i=1}^{r} \omega_i \braket{\eval_{\xi_i}^{[t + d_{\vb g} + d]}}{f}=  \sum_{i=1}^{r} \omega_i f(\xi_i) \ge f^*. 
$$
This implies that $f(\xi_i)=f^*$ for $i=1,\ldots, r$. Therefore $f^* = f^*_{\mom, d}$ and $S^{\min} \supset \Xi$.

From \Cref{prop::genericity} we have that $\lambda^*\in \cL_{2d}^{\min} (f; \vb g)$ {being} generic implies that $(\lambda^*)^{[2(t+ d_{\vb g})]}$ is generic in $\cL_{2d}^{\min} (f; \vb g)^{[2(t+ d_{\vb g})]}$. Moreover $(\lambda^*)^{[2(t+ d_{\vb g})]} = \sum_{i=1}^{r} \omega_i \eval_{\xi_i}^{[2(t+ d_{\vb g})]} \in \cLone_{2d}(\vb g, \pm (f-f^*))^{[2(t+ d_{\vb g})]}$ since $\Xi \subset S^{\min} = \cS(\vb g, \pm(f-f^*))$. Then, as $\cLone_{2d}(\vb g, \pm (f-f^*)) \subset \cL_{2d}^{\min}(f; \vb g)$ and $(\lambda^*)^{[2(t+ d_{\vb g})]}$ is generic in $\cL_{2d}^{\min} (f; \vb g)^{[2(t+ d_{\vb g})]}$, we have
\[
   \forall \lambda \in \cL_{2d}(\vb g, \pm (f-f^*)) \quad \ann_{t+ d_{\vb g}}(\lambda^*) \subset \ann_{t+ d_{\vb g}}(\lambda), 
\]
i.e. $(\lambda^*)^{[2(t+ d_{\vb g})]}$ is generic in $\cL_{2d}(\vb g, \pm (f-f^*))^{[2(t+ d_{\vb g})]}$. 
We can then conclude from \Cref{prop:flat truncation} that:
\begin{itemize}
    \item $S^{\min} = \Xi$;
    \item $\cL_{2d}^{\min}(f;\vb g)^{[t + d_{\vb g} +  d  ]} = \cone (\eval_{\xi_1},\dots,\eval_{\xi_r})^{[t + d_{\vb g} + d ]}$;
    \item {$t \ge \theta(\xi_1, \dots , \xi_r)$;}
    \item {$\supp \cQ(\vb g, \pm (f-f^*))$ is zero dimensional, {since $\sqrt{\supp \cQ(\vb g, \pm (f-f^*))}= \sqrt[\R]{\supp \cQ(\vb g, \pm (f-f^*))}$ is zero-dimensional.}}
\end{itemize}
{From \Cref{thm::fin_set_qua}, applied to $\cQ(\vb g, \pm (f-f^*))$, we deduce that all $\lambda \in \cL_{2d}^{\min}(f;\vb g)$ have flat truncation at degree $t$}.

Finally we show MoM exactness. For every $d'\ge d$ and $\lambda \in \cL_{2d'}^{\min}(f; \vb g)$, we have $\lambda^{[2d]} \in \cL_{2d}^{\min}(f; \vb g)$ since $\braket{\lambda}{f}=f^*$. 
Therefore $\lambda$ has flat truncation in degree $t$ and by \Cref{lem:flat diracs}, $\lambda^{[t + d_{\vb g}+ d']}$ is 
 {represented by} a convex sum of Dirac measures at points in $S$ (that are the minimizers $\xi_1,\dots ,\xi_r$). This shows that the moment hierarchy is exact, since increasing $d'$ we increase also the truncation degree where $\lambda$ coincides with a weighted sum of evaluations at the minimizers, {i.e., setting $k = t + d_{\vb g}+ d'$, we have $\cL_{2d'}^{\min}(f; \vb g)^{[k]} \subset \cMone(\cS(\vb g))^{[k]}$ for all $d'$. As increasing $d'$ in the previous inclusion we also increase $k$, this proves MoM exactness and concludes the proof.}
\end{proof}

{\Cref{thm::flat_implies_exact} relaxes previous degree conditions. In \cite[Th.~6.18]{laurent_sums_2009}, the degree condition to deduce $(i)$ and $(ii)$ in \Cref{thm::flat_implies_exact} is $\deg(f) \le 2 t + 2 d_{\vb g}$, which is in general a stronger condition than $\deg(f) \le  t + d_{\vb g}+d$ in \Cref{thm::flat_implies_exact}.
\Cref{thm::flat_implies_exact} also shows that the kernel of the moment matrix of a generic truncated moment sequence, $\ann_{t+1}(\lambda^*)$, is the truncated vanishing ideal of the minimizers and that the hierarchy is exact. This means that any element in $\cL_{2d}^{\min}(f; \vb g)$ truncated in any degree $t$ is {represented by} a measure, provided $d\ge t$ is big enough.
Recall from \Cref{sec:Geometry} that the points in the relative interior of $\cL^{\min}_{2d}(f; \vb g)$ are generic in this convex cone, and thus interior point SDP solvers in practice return approximations of generic linear functionals in $\cL_{2d}^{\min}(f; \vb g)$. \Cref{thm::flat_implies_exact} shows that, if \emph{any} generic minimizing linear functional has flat truncation at degree $t$, {then} \emph{all} minimizing linear functional have flat truncation at degree $t$. Similar results have been described in \cite{lasserre_semidefinite_2008, lasserre_moment_2013}, but only for the feasibility problem in the zero-dimensional case. 
This result was not previously described in the general polynomial optimization case (e.g. this statement cannot be found in \cite[Th.~6.18]{laurent_sums_2009}). In \cite{nie_certifying_2013}, it is shown that finite convergence of the hierarchies and flat truncation for all minimizing linear functionals are generically equivalent conditions, but the special properties of generic minimizing linear functionals are not considered (see also the discussion after \Cref{thm::flat_iff}).}

A key ingredient in this analysis is \Cref{lem:flat diracs}. From \Cref{lem:flat diracs} and the remark after it, if $d$ is big enough the results of \Cref{thm::flat_implies_exact} hold true, if we replace the condition $\rank H_{\lambda^*}^{t} = \rank H_{\lambda^*}^{t+d_{\vb g}}$ with $\rank H_{\lambda^*}^{t}= \rank H_{\lambda^*}^{t+1}$.

However, we show in \Cref{ex::flat_out} that the condition $\rank H_{\lambda^*}^{t}= \rank H_{\lambda^*}^{t+1}$ is in general not sufficient to conclude that the points extracted from the moment matrix are inside the semialgebraic set. 
This is the first example where such a pathological behaviour is explicit.

\begin{example}
\label{ex::flat_out}
We consider the problem of minimizing $f = (1+X)(X-1)^2$ on $\cS(1-X^2, -X^3) = [-1, 0]$. Notice that the SoS hierarchy is exact, since $f^* = 0$ and:
\[
    (1+X)(X-1)^2 = \frac{1}{2}\big( (1+X)^2 + 1 - X^2 \big)(X-1)^2 \in \cQ_4(1-X^2, -X^3).
\]
This implies that $f^*_{\sos, 2} = f^*_{\mom, 2} = f^*$.
The only minimizer of $f$ on $S$ is $-1$, and $\cI(-1) = (X+1)$: therefore we would expect to get flat truncation at degree zero for a generic element, and in particular $\rank H_{\lambda^*}^0  = \rank H_{\lambda^*}^1 = 1$. But this is not the case if we consider the MoM relaxation of order $2$. Indeed an explicit computation shows that $\lambda = \frac{1}{2}(\eval_{-1}^{[4]} + \eval_{1}^{[4]}) \in \cL_4^{\min} (f; \vb g)$, and $\rank H_{\lambda}^1 = \rank H_{\lambda}^2 = 2$. Therefore a generic $\lambda^* \in \cL_4^{\min} (f; \vb g)$ cannot satisfy flat truncation at degree $t=0$. More precisely, it is possible to show that $\cL_4^{\min} (f; \vb g)= \conv \big(\eval_{-1}^{[4]}, \frac{1}{2}(\eval_{-1}^{[4]} + \eval_{1}^{[4]})\big)$. Therefore a generic $\lambda^* \in \cL_4^{\min} (f; \vb g)$ will also satisfy $\rank H_{\lambda}^1 = \rank H_{\lambda}^2 = 2$.

We confirm numerically the computation above, using the package \texttt{MomentPolynomialOpt.jl} to compute $f^*$ and a generic $\lambda^* \in \cL^{\min}_4(f; \vb g)$: the pseudo-moments that we obtain are
    \begin{table}[h!]
    \begin{tabularx}{\textwidth}{X X X}
$\lambda^*_0 = 0.9999999989784975,$ &$\lambda^*_1 = - 0.3530324749675295$ & $\lambda^*_2 = 0.9998474115299072$ \\
$\lambda^*_3 = - 0.3531851571450224$ &$\lambda^*_4 = 0.9996947364721432.$ & 
    \end{tabularx}
    \end{table}
We compute the singular values of $H_{\lambda^*}^0 $, $H_{\lambda^*}^1$ and $H_{\lambda^*}^2$ to have a numerically stable indication of the ranks:
\begin{align*}
    \text{Sing. Val. of } H_{\lambda^*}^0&: 0.9999999989784975 \\
    \text{Sing. Val. of } H_{\lambda^*}^1&: 1.352956188465637,  0.6468912220427679 \\
    \text{Sing. Val. of } H_{\lambda^*}^2&: 2.2063794508570065, 0.7931627759613444, 7.983780245045715 \cdot 10^{-8}
\end{align*}
This confirms the theoretical description and shows that the rank condition is numerically satisfied for $t=1$. The points extracted from the matrix are $\xi_1 \approx 0.9997640487211856$ and $\xi_2 \approx -1.0000000483192044$: notice that $\xi_1 \notin S$. This happens because the condition $\rank H^t_{\lambda^*} = \rank H^{t+d_{\vb g}}_{\lambda^*}$ is not satisfied (we cannot compute $H^{t+d_{\vb g}}_{\lambda^*} = H^{3}_{\lambda^*}$ as $3 = t+d_{\vb g} > d =2$).
 
On the other hand, if we increase the order of the relaxation and compute $\lambda^* \in \cL^{\min}_6(f; \vb g)$ generic, we can verify flat truncation for $t = 0$ and the only point extracted is $-1$. Moreover, notice, from \Cref{lem:flat diracs} applied with $s=1$ and the remark below, that in this case it is enough to check $ \rank H^0_{\lambda^*} = \rank H^1_{\lambda^*}$ to verify that $\rank H^t_{\lambda^*} = \rank H^{t+d_{\vb g}}_{\lambda^*}$, since the condition $0 = t \le d + s - \deg(\vb g) =1$ is satisfied.
\end{example}
We have seen that flat truncation implies MoM exactness and a finite set of minimizers.
We now show that, under the assumption of MoM finite convergence, flat truncation is equivalent to {the zero-dimensionality of the} support for the quadratic module $Q+(f-f^*)$ defining the minimizers.

We first need a technical lemma, that will be important to investigate the relationship between $\cL_{2d}^{\min}(f; \vb g)$ and $\cLone_{2d}(\vb g, \pm(f-f^*))$. Indeed, notice that $\cLone_{2d}(\vb g, \pm(f-f^*)) \subset \cL_{2d}^{\min}(f; \vb g)$, by definition, but the converse inclusion is not true in general, since for $\lambda \in \cL_{2d}^{\min}(f; \vb g)$ we only have $\braket{\lambda}{f}= f^*$, and not $f-f^* \in \ann_{d - \frac{\deg(f)}{2}}(\lambda)$.

\begin{lemma}
\label{lem::zero_implies_ann}
    Let $f \in \tqgen{g}{2k}$, $\lambda \in \cL_{2d}(\vb g)$ and $t\in \N$ with $0\le t \le d-k$. Then $\braket{\lambda}{f} = 0$ implies for all $q \in \R[X]_t$, $\braket{\lambda}{qf} = 0$. In other words, $f \in \ann_t(\lambda)$.
\end{lemma}
\begin{proof}
    {We set $g_0 = 1$ for notational convenience. Let $f = \sum_{i} s_i g_i = \sum_{i, j} p_{i,j}^2 g_i \in \tqgen{g}{2k}$, with $\deg p_{i,j}^2 g_i \le 2k$.
    We want to prove that for all $q \in \R[\vb X]$ such that $\deg(q) \le t$ we have $\braket{\lambda}{qf} = 0$. In particular, it is enough to prove that:
    \begin{equation}
        \label{eq::zero}
        \braket{\lambda}{q p_{i,j}^2 g_i}=0 \text{ for all } i,j \text{ and } q \in \R[\vb X]_t.
    \end{equation}}
    
    {Now, notice that $\braket{\lambda}{f} = 0$ implies $\braket{\lambda}{p_{i,j}^2g_i} = 0$ for all $i,j$ (since $\lambda$ is nonnegative on $\tqgen{g}{2k}$), and therefore for all $T \in \R$ and $h \in \R[\vb X]_{t + \deg p_{i,j}}$ we have:
    \begin{equation*}
        0\le\braket{\lambda}{(p_{i,j} - Th)^2 g_i} = T^2 \braket{\lambda}{h^2g_i} - 2T\braket{\lambda}{h p_{i,j}g_i}    
    \end{equation*}
    (we can apply $\lambda$ to $(p_{i,j} - Th)^2 g_i$ since $\deg((p_{i,j} - Th)^2 g_i) \le 2t+2k \le 2d$). For all $h \in \R[\vb X]_{t + \deg p_{i,j}}$ the polynomial $T^2 \braket{\lambda}{h^2g_i} - 2T\braket{\lambda}{h p_{i,j}g_i}$ is nonnegative by the above inequality, and since it vanishes at $T=0$, it has a double root at $T=0$. This implies that $\braket{\lambda}{h p_{i,j}g_i}=0$ for all $h \in \R[\vb X]_{t + \deg p_{i,j}}$. If we substitute $h = q p_{i,j}$, we deduce \cref{eq::zero} for all $q \in \R[\vb X]_t$, and thus $f \in \ann_t(\lambda)$.}
\end{proof}

We can now prove the equivalence between the flat truncation and {the zero-dimensionality of} the support for the quadratic module $Q + (f-f^*)$ defining the minimizers. The proof relies on two main ideas:
\begin{enumerate}
\item we use \Cref{lem::zero_implies_ann} to move from $\cL_{2d}^{\min}(f; \vb g)$ to $\cLone_{2d}(\vb g, \pm (f-f^*))$;
\item we use \Cref{thm::fin_set_qua} to analyze the zero dimensional case.
\end{enumerate}
\begin{theorem}\label{thm::flat_iff}
    Assume that we have MoM finite convergence. Then $\dim \frac{\R[\vb X]}{\supp (\cQ(\vb g) + (f-f^*))} = 0$ if and only if there exists $d$ such that a generic $\lambda^* \in \cL_{2d}^{\min}(f; \vb g)$ has flat truncation. 
    
    In this case, if $\intdeg = \intdeg (S^{\min})$ is the interpolation degree and $D = \max(d_{\vb g}, \ceil{\frac{\deg(f)}{2}})$, there exists $\delta\in \N$ such that $f - f^* \in \overline{\tqgen{g}{2\delta}}$ and  flat truncation at degree $\intdeg$ happens for {all} $\lambda \in \cL_{2d}^{\min}(f;\vb g)$, when $d$ is such that:
    \begin{enumerate}
    \item $\displaystyle (\sqrt[\R]{\supp Q(\vb g)})_{2\delta+2\intdeg+2D-\deg(f)} \subset \overline{\tqgen{g}{2d}}$;
        \item $\displaystyle \cI(S^{\min})_{2\intdeg+2D} \subset \overline{\tqgen{g}{2d}+ (f-f^*)_{2d}}$;
        \item $\displaystyle \delta + 2\intdeg + 2D - \deg(f) \le d$.
    \end{enumerate}
\end{theorem}
\begin{proof}
    Let us assume without loss of generality that $f^* = 0$.

    We first show that flat truncation implies  $\dim \frac{\R[\vb X]}{\supp (Q + (f))} = 0$. As in the proof of \Cref{thm::flat_implies_exact}, if $\lambda^* \in \cL_{2d}^{\min}(f; \vb g)$ is generic satisfying flat truncation at degree $t$ then $(\lambda^*)^{[2(t+d_{\vb g})]}$ is a generic element of $\cL_{2d}(\vb g, \pm f)^{[2(t+d_{\vb g})]}$. Since the flat truncation property is satisfied, we conclude from \Cref{prop:flat truncation}, {in particular point (iv)}, that $\sqrt[\R]{\supp (Q + (f))} = (\ann_{t+1}(\lambda^*)) = \cI(S^{\min})$ and $\dim \frac{\R[\vb X]}{\supp (Q + (f))} = \dim \frac{\R[\vb X]}{\cI( S^{\min})} = 0$.
    
    Conversely, if $\dim \frac{\R[\vb X]}{\supp (Q + (f))} = 0$, we deduce from \Cref{thm::fin_set_qua} that the flat truncation property is satisfied for any $\lambda \in \cL_{2d}(\vb g, \pm f)$ at degree $\intdeg=\intdeg(S^{\min})=\intdeg(S(\vb g , \pm f)$ for $d$ such that $\cI(S^{\min})_{2(\intdeg+D)} \subset \overline{\tqgen{g}{2d}+ (f)_{2d}}$. 
     Let $a = 2\intdeg+2D$ and $\lambda \in \cL_{2d}^{\min}(f;\vb g)$ generic. We want to show that $\lambda^{[a]} \in \cL_{2d}(\vb g, \pm f)^{[a]}$, so that we can conclude using \Cref{thm::fin_set_qua}.
    Since $\lambda \in \cL_{2d}^{\min}(f;\vb g) \subset \cL_{2d}(\vb g)$, it is sufficient to prove that: 
    \begin{equation}
        \label{eq::claim}
        \braket{\lambda}{q f} = 0\text{ for all $q$ of degree $\le a - \deg (f)$}.
    \end{equation}
    
    We prove now \eqref{eq::claim}, starting from $\braket{\lambda}{f} = f^*=0$. MoM finite convergence implies that $\braket{\lambda}{f} \ge 0$ for all $\lambda \in \cL_{2d}(\vb g)$, and therefore $f \in \cL_{2d}(\vb g)^{\vee} = \overline{\tqgen{g}{2d}}$. Let $\delta \le d$ be minimal such that $f \in \overline{\tqgen{g}{2\delta}}$ and let $\vb h = h_1, \dots, h_m$ be a graded basis of $\sqrt[\R]{\supp Q}$. From \cite[Lem. 4.1.4]{marshall_positive_2008} we deduce that $\tqgen{g}{2\delta} + (\vb h)_{2\delta}$ is closed (as a subset of $\R[\vb X]_{2\delta}$ with the Euclidean topology, see also the proof of \Cref{prop:radical closed}), and therefore $\overline{\tqgen{g}{2\delta}} \subset \tqgen{g}{2\delta} + (\vb h)_{2\delta}$. Thus we can write:
    $$
    f = g + h = \sum_{i=0}^s s_i g_i + \sum_{i=1}^m p_i h_i \in \tqgen{g}{2\delta} + (\vb h)_{2\delta},
    $$ 
    where we set $g_0 = 1$ for notation convenience, $g = \sum_{i=0}^s s_i g_i \in \tqgen{g}{2\delta}$ and $h = \sum_{i=1}^m p_i h_i \in (\vb h)_{2\delta}$. It is then enough to prove that $\braket{\lambda}{qg} = \braket{\lambda}{qh} = 0$, where $\lambda \in \cL_{2d}^{\min}(f;\vb g)$ and $\deg(qg)\le b, \deg(qh) \le b $ for $b =2\delta+ a-\deg(f)= 2\delta+ 2\intdeg + 2D -\deg(f)$.
    
    We start by proving $\braket{\lambda}{qh} = 0$. We deduce from \Cref{lem::dual_support} that for $d$ big enough we have $(\vb h)_{b} \subset \overline{\tqgen{g}{2d}}$ and $\cL_{2d}(\vb g)^{[b]} \subset \cL_{b}(\pm \vb h)$. Therefore $$\braket{\lambda}{qh} = \braket{\lambda^{[b]}}{qh} = 0.$$
    
    Now we prove that $\braket{\lambda}{qg} = 0$. Since $\delta + (a-\deg(f)) \le d$, we can apply \Cref{lem::zero_implies_ann} with $g \in \tqgen{g}{2\delta}$ and $t = a - \deg(f) \ge \deg(q)$, and conclude that $\braket{\lambda}{qg} = 0$, as desired.
    
    Therefore $\braket{\lambda}{q f} =\braket{\lambda^*}{qg} + \braket{\lambda^*}{qh} = 0$ for all $q$ of degree $\le a - \deg (f)$ and \eqref{eq::claim} is satisfied. This implies that $\lambda^{[a]} \in \cL_{2d}(\vb g, \pm f)^{[a]}$, {or in other words $\cL_{2d}^{\min}(f;\vb g)^{[a]} \subset \cL_{2d}(\vb g, \pm f)^{[a]}$}. 
    Therefore we can apply \Cref{thm::fin_set_qua} to conclude that the flat truncation property is satisfied for {all} $\lambda \in \cL_{2d}^{\min}(f;\vb g)$ at degree $\intdeg$.
\end{proof}

Let us briefly comment the degree conditions in \Cref{thm::flat_iff}.
\begin{enumerate}
    \item If $S$ has nonempty interior, it is not necessary to check the first condition, since in this case $\supp Q = 0$. More generally if the quadratic module is reduced, that is if $\sqrt[\R]{\supp Q} = \supp Q$, the first condition is automatically satisfied;
    \item The second condition is the key one:  it tells us that flat truncation happens when the ideal of the minimizers, truncated in the appropriate degree, can be described using the truncated quadratic module and the truncated ideal generated by $f-f^*$;
    \item The third condition is technical, derived from \Cref{lem::zero_implies_ann}. It allows to move from $\cL^{\min}_{2d}(f;\vb g)$ to $\cL_{2d}(\vb g, \pm(f-f^*))$, where we can apply the results of the previous section.
\end{enumerate}

{Related properties have been previously investigated. It is shown in \cite[Th.~2.2]{nie_certifying_2013} that, under genericity assumptions, if for an order $d$ big enough we have $f^*_{\sos, d} = f^*_{\mom, d}$ (strong duality) and $\sup = \max$ in the definition of $f^*_{\sos, d}$, then there is finite convergence (that is $f^*_{\mom, d} = f^*$) if and only if flat truncation is satisfied for every $\lambda \in \cL_{2d}^{\min}(f;\vb g)$. \Cref{thm::flat_iff} applies for other cases, for instance when there is finite convergence but the SoS hierachy is not exact (see \Cref{ex::fin_var}). This is made possible by analysing the closure of the quadratic modules we are considering.
As a corollary of \Cref{thm::flat_iff} we will show (in \Cref{thm::BHC} and \Cref{cor::generic_exactness}) that, under genericity assumption,
we have finite convergence, that the MoM hierarchy is exact and that the flat truncation property is satisfied.}

{Another improvement made is the estimation of the order $d$ of the hierarchy that is sufficient to have flat truncation, answering a question in \cite[Sec.~5]{nie_certifying_2013}. 
This is the first result in this direction. These conditions depends on properties of the minimizers and the quadratic module $\cQ_{2d}(\vb g)$ that might be difficult to check a priori. In particular, they depend on the interpolation degree of the minimizers. However, they may be analyzed in some specific cases, such as optimization problems with a single minimizer, to deduce more precise bounds. Moreover, the relation between the degree of flat truncation and the interpolation degree $\intdeg$ shows that computing the degree of flat truncation is at least as hard as computing $\intdeg$, which is difficult with no a priori knowledge on the minimizers.}

We illustrate \Cref{thm::flat_iff} in the following example, showing how it can help to predict the flat truncation degree.
\begin{example}\label{ex::BHC2}
    We continue \Cref{ex::BHC}. Notice that  $f - f^* = X^2 \in Q_2 := \cQ_2(\vb g) = \cQ_2(1-X^2-Y^2, X+Y-1)$ (i.e. the SoS hierarchy is exact) and then the MoM hierarchy has finite convergence.
    Using \Cref{thm::flat_iff}, we analyse if flat truncation holds at some degree. We have $\cI(S^{\min}) = (X, Y-1) \subset \sqrt[\R]{\supp(Q + (f-f^*))} = \sqrt[\R]{\supp(Q + (X^2))}$ where $Q := \cQ(1-X^2-Y^2, X+Y-1)$.
    Indeed:
    \begin{align*}
        X &= \frac{X^2+(Y-1)^2}{2} + \frac{1-X^2-Y^2}{2} + X + Y -1 \in Q_2 \subset \overline{Q_2+(X^2)_2}\\
        - X + \epsilon &= \frac{\epsilon}{2}\big( 1 - \frac{X^2}{\epsilon^2} + (1-\frac{X}{\epsilon})^2 \big) \in Q_2 + (X^2)_2 \ \forall \epsilon>0 \Rightarrow -X \in \overline{Q_2 + (X^2)_2} \\
        1-Y &= \frac{1}{2} \big(X^2 + (1-Y)^2 + 1 - X^2 - Y^2 \big) \in Q_2 \subset \overline{Q_2 + (X^2)_2}\\
        Y -1 &= X+Y-1 - X \in Q_2 + \overline{Q_2+(X^2)_2} = \overline{Q_2+(X^2)_2}
    \end{align*}
    that implies $(X, Y-1)_1 \subset \supp( \overline{Q_2+(X^2)_2}) \subset \sqrt[\R]{\supp(Q + (f-f^*))} $  and thus $\dim \frac{\R[\vb X]}{\supp(Q + (X^2))} = 0$. \Cref{thm::flat_iff} implies that flat truncation holds for a  high enough order $d$ of the MoM relaxation. 
    
    We investigate the degree conditions in \Cref{thm::flat_iff} to prove that flat truncation happens for the MoM hierarchy at order $d=1$. We have $I(S^{\min})=(X, Y-1)$, $\intdeg=0$, $d_{\vb g}=1$, $\deg(f) = 2$, $D = 1$ and $\delta=1$.
    \begin{enumerate}
        \item As $S$ has nonempty interior, $\supp Q = 0$ and the first point (i) is satisfied.
        \item Notice that $2(\intdeg + D) = 2$, and therefore we have to show that $(X, Y-1)_2 \subset \overline{Q_2+(X^2)_2}$. Since we have shown above that $(X, Y-1)_1 \subset \overline{Q_2+(X^2)_2}$, it is enough to prove that $\pm X^2, \pm X(Y-1), \pm (Y-1)^2 \in \overline{Q_2+(X^2)_2}$. Now, $\pm X^2, \, (Y-1)^2 \in \overline{Q_2+(X^2)_2}$ by definition. Finally:
        \begin{align*}
            -(Y-1)^2 &= 1-Y^2 - X^2 + X^2 + 2 (X+Y-1) -2X \in Q_2+ \overline{Q_2+(X^2)_2} = \overline{Q_2+(X^2)_2} \\
            \pm X(Y-1)&= \frac{1}{2}\big( (\pm X +(Y-1))^2 - X^2 - (Y-1)^2 \big) \in \overline{Q_2+(X^2)_2},
        \end{align*}
        concluding the proof of the second point (ii).
        \item We have $1 =  \delta + 2\intdeg +2D - \deg(f) \le d = 1$, and thus the third point (iii) is satisfied.
    \end{enumerate}
    Therefore flat truncation happens at degree $\intdeg=0$ for the MoM hierarchy at order $d=1$.
\end{example}

\subsection{Boundary Hessian Conditions}\label{sec::BHC}

In this section, we show that if
regularity conditions, known as Boundary Hessian
Conditions (BHC), are satisfied, then the flat truncation property holds. These are conditions on the minimizers of a polynomial $f$ on a basic semialgebraic set $S$ introduced by
    Marshall in \cite{MarshallRepresentationsnonnegativepolynomials2006}
    and \cite{MarshallRepresentationsNonNegativePolynomials2009},
    which are particular cases of the so
    called local-global principle. Under these conditions, global
    properties of polynomials (e.g. $f\in Q$) can be deduced from
    local properties (e.g. checking the BHC at the minimizers of $f$ on
    $\cS(Q)$). We refer to
    \cite{scheiderer_distinguished_2005}, \cite{scheiderer_sums_2006} and
    \cite[Ch. 9]{marshall_positive_2008} for more details.
    We introduce BHC conditions following \cite{MarshallRepresentationsnonnegativepolynomials2006,nie_optimality_2014}.

    \begin{definition}[Boundary Hessian Conditions]\label{def:BHC}
Consider a POP with inequality constraints $\vb g= g_1, \dots , g_r$, equality constraints $\vb h = h_1, \dots , h_s$ and objective function $f$. Let $V = {\cV_\R(\vb h)}  \subset \R^n$ and suppose that $Q = \cQ(\vb g,\pm \vb h )$ is Archimedean.
We say that the \emph{Boundary Hessian Conditions} hold at a minimizer point $\xi \in S(\vb g, \pm \vb h)$ of $f$ if $\xi$ is a smooth point of $V$ and:
        \begin{enumerate}
            \item we can choose $g_{i_1} = t_1, \dots, g_{i_k}= t_k$ that are part of a regular system of parameters $t_1,\dots,t_m$, $m \ge k$, for $V$ at $\xi$\footnote{{this means that the $t_i$'s vanish at $\xi$ and that their gradients (or differentials) are linearly independent at $\xi$}} and for some neighbourhood $U$ of $\xi$ we have $\cS(g_{i_1}, \dots, g_{i_k}, \pm \vb h) \cap U = \cS(\vb g , \pm \vb h) \cap U$;
            \item On $V$, locally at $\xi$ we have that $\nabla{f}=a_1 \nabla{t_1}+ \dots + a_m \nabla{t_{m}}$, where $a_i$ are strictly positive real numbers;
            \item On $V$, locally at $\xi$ we have that $\hess(f)(0,\dots,0, t_{k+1}, \dots t_m)$ is positive definite in $t_{k+1}, \dots t_m$.
        \end{enumerate}
    \end{definition}

These conditions are related to standard conditions in
optimization at a point $\xi \in S$ (see
\cite{BertsekasNonlinearProgramming1999} and \cite{nie_optimality_2014}).
Notice that when BHC hold, the minimizers are non-singular, isolated points and thus finite.
It is proved in \cite{MarshallRepresentationsnonnegativepolynomials2006} that if BHC holds at every minimizer of $f$ on
    $\cS(\vb g)$ then $f-f^{*} \in \cQ(\vb g)$, which implies that the SoS
    hierarchy is exact.
    \cite{nie_optimality_2014} proved that the BHC at every minimizer of $f$, which hold generically, implies
    the SoS finite convergence property.

    In this section, we prove that, if the BHC hold, then the flat truncation property holds. For simplicity, we restrict to the case of only inequalities, i.e. we assume $\vb h = 0$. 

    \begin{theorem}\label{thm::BHC}
        Let $f \in \R[\vb{X}]$, $Q=\cQ(\vb g)$ be an Archimedean finitely
        generated quadratic module and assume that the BHC hold at every
        minimizer of $f$ on $S=\cS(\vb g)$.
        Then the SoS hierarchy is exact, the MoM hierarchy is exact, and the flat truncation holds for {all} $\lambda \in \cL_{2d}^{\min}(f;\vb g)$ when $d$ is big enough.  If conditions (i)-(iii) in \Cref{thm::flat_iff} are satisfied for the relaxation order $d$, then the flat truncation property holds. 
    \end{theorem}
    \begin{proof}
    If BHC hold at every minimizer of $f$ on
    $\cS(\vb g)$ then $S^{\min}$ is finite and$f-f^{*} \in \cQ(\vb g)$ (see \cite{MarshallRepresentationsnonnegativepolynomials2006}), which implies that the SoS
    hierarchy is exact and thus the MoM hierarchy has finite convergence. Moreover, if the BHC conditions hold at every minimizer of $f$ on $S$, then $\dim \frac{\R[\vb X]}{\supp (Q + (f-f^*))} = 0$
        (see the proof of
        \cite[Th.~2.3]{MarshallRepresentationsnonnegativepolynomials2006}, where it is shown that the field of fractions of $\R[\vb X]$ modulo any minimal prime ideal lying over $\supp (Q + (f-f^*))$ is isomorphic to $\R$, that implies $\dim \frac{\R[\vb X]}{\supp (Q + (f-f^*))} = 0$). {Then \Cref{thm::flat_iff} implies that flat truncation occurs for all $\lambda \in \cL_{2d}^{\min}(f;\vb g)$ and in particular for generic elements, when conditions (i)-(iii) are satisfied. Finally \Cref{thm::flat_implies_exact} allows to conclude that the MoM hierarchy is exact}.
    \end{proof}

        We show now that flat truncation and moment exactness hold \emph{generically} in the space $\mathcal A$ of polynomials $(f, \vb g)$ of bounded degree for which $\cQ(\vb g)$ is Archimedean.
        {We notice that if one of the $g_i$'s has even degree equal to $2d_i$, then $\mathcal A$ has nonempty interior: indeed, if we choose $g_1 = R^2-x_1^{2d_1}-\dots - x_n^{2d_1}$, for any sufficiently small perturbation $\widetilde g_1$ of $g_1$ in degree $d_1$, $\cS(\widetilde g_1)$ is compact. Therefore $Q(\widetilde{\vb g})$ satisfies the Archimedean condition for all sufficiently small perturbations $\widetilde{\vb g}$ of $\vb g$ (see also \cite[Th.~7.1.1]{marshall_positive_2008}). This is the typical situation in polynomial optimization, where it is common to set $g_1 := R^2-x_1^2-\dots - x_n^2$.}
        
        For polynomials $f \in \R[\vb X]_d$ and $g_1\in \R[\vb X]_{d_1},\dots,g_s\in
        \R[\vb X]_{d_s}$, we say that a property holds generically (or
        that the property holds for generic $f,g_{1}, \ldots, g_{s}$) if
        there exists finitely many nonzero polynomials
        $\phi_1,\dots,\phi_l$ in the coefficients of polynomials in
        $\R[\vb X]_d$ and $\R[\vb X]_{d_1},\dots,\R[\vb X]_{d_s}$ such that,
        when $\phi_1(f,\vb g)\neq 0,\dots,\phi_l(f,\vb g)\neq 0$, the
        property holds. We say that the property holds for $(f, \vb g)$ \emph{generic satisfying the Archimedean condition} if the property hold for all $(f, \vb g) \in \mathcal A \cap \{\, \phi_1(f,\vb g)\neq 0,\dots,\phi_l(f,\vb g)\neq 0 \,\}$.
    \begin{corollary}
    \label{cor::generic_exactness}
        For $f \in \R[\vb X]_d$ and $g_1\in \R[\vb X]_{d_1},\dots,g_s\in
        \R[\vb X]_{d_s}$ generic satisfying the Archimedean condition, the SoS hierarchy is exact, the MoM hierarchy is exact and the flat truncation holds for {all} $\lambda \in \cL_{2d}^{\min}(f; \vb g)$ when $d$ is big enough:
        if conditions (i)-(iii) in \Cref{thm::flat_iff} are satisfied for the relaxation order $d$, then the flat truncation holds.  
    \end{corollary}
    \begin{proof}
       By \cite[Th.~1.2]{nie_optimality_2014} BHC hold generically. We apply
       \Cref{thm::BHC} to conclude.
     \end{proof}
     {Genericity of flat truncation can also be determined by combining other previous results, as follows. In \cite{nie_certifying_2013}, it is shown that finite convergence and flat truncation are equivalent conditions under some (non-trivial) genericity assumptions (Assumption 2.1 in \cite{nie_certifying_2013}, no duality gap and $\sup = \max$ in the SoS hierarchy, see Theorem~2.2, p.~492). In \cite{nie_optimality_2014}, it is shown that BHC are generic properties, using local optimality conditions. We can combine these two results to show that under (stronger) generic conditions,  flat truncation of the Lasserre's hierarchy holds, but {some attention is needed}. First, one needs to ensure that generically there is no duality gap,
     for instance using the fact that the semialgebraic set has nonempty interior or using the results of \cite{josz_strong_2016}, and that the supremum is attained. On the other hand, \Cref{cor::generic_exactness} only requires the BHC to hold to ensure generic flat truncation.}
     
Here is an example where BHC holds.
\begin{example}[Robinson form]
    We find the minimizers of Robinson form $f = x^6+y^6+z^6+3x^2y^2z^2-x^4(y^2+z^2)-y^4(x^2+z^2)-z^4(x^2+y^2)$ on the unit sphere $h = x^2+y^2+z^2-1$. The Robinson polynomial has minimum $f^*=0$ on the unit sphere, and the minimizers on $\cV_{\R}(h)$ are:
    \[
        \frac{\sqrt{3}}{3}(\pm 1,\pm 1,\pm 1), \frac{\sqrt{2}}{2}(0,\pm 1,\pm 1), \frac{\sqrt{2}}{2}(\pm 1, 0,\pm 1), \frac{\sqrt{2}}{2}(\pm 1,\pm 1, 0).
    \]
    BHC are satisfied at every minimizer (see \cite[Ex. 3.2]{nie_optimality_2014}), flat truncation holds and we can recover the minimizers from \Cref{thm::BHC}. 
    We estimate the bounds of \Cref{thm::flat_iff} and compare with the numerical experiments. It is not necessary to check (i), since $(h) = \sqrt[\R]{\supp\cQ(\pm h)}$. {For the point (ii), we compute the interpolation degree of the minimizers. The vanishing ideal of the minimizers is
    \[
    (3xyz^3-xyz, 6z^5-5z^3+z, -6yz^4+2y^3+3yz^2-y, x^2+y^2+z^2-1, 6xz^4+2xy^2-xz^2-x, 4y^2z^2+2z^4-2y^2-3z^2+1)
    \]
    which has generators of degree $5$, and this ideal cannot be generated in degree $4$. Therefore the interpolation degree is $\intdeg=4$.
    Then $2\intdeg + 2D = 14$, and thus we expect flat truncation for $d \ge 7$. For the point (iii), we notice that, since $\deg f =6$, then $\delta \ge 3$ in \Cref{thm::flat_iff}. For the flat truncation, we then need to have $d \ge \delta + 2\intdeg + 2D - \deg(f) \ge 3 + 10 + 6 -6 -2 = 11$.}
    However, in practice for this example we have flat truncation numerically at order $6$ and not before (using the SDP solver \texttt{SDPA}). We recover a good approximation of the minimizers at this order: 
    \begin{verbatim}
        v, M = minimize(f, [h], [], X, 6)
        w, Xi = get_measure(M)
    \end{verbatim}
    Here $f^*_{\mom, 6} \approx v =-1.27211 \cdot 10^{-7}$ and
    the minimizers with positive coordinates are (all the twenty minimizers are found):
    \begin{table}[h!]
    \begin{tabularx}{\textwidth}{c|X X X X}
                & $\xi_1$& $\xi_2$& $\xi_3$& $\xi_4$\\\hline
$x$ &$0.577351068999$ & $8.812477930640~10^{-12}$ & $0.707107158043$ & $0.707107157553$ \\
$y$ &$0.577351069076$ & $0.707107158048$ & $1.271729446125~10^{-13}$ & $0.707107157555$ \\
$z$ & $0.577351066102$ & $0.707107158048$ & $0.707107158042$ & $2.478771201340~10^{-9}$
    \end{tabularx}
    \end{table}
\end{example}

\subsection{Finite semialgebraic sets}\label{sec:finite set}

In this section we consider the case when $S=\cS(\vb g)=\{\xi_1,\dots ,\xi_r\}\subset \R^{n}$ is non-empty and finite.
\begin{theorem}\label{th::finvar_exact_qua}
    Let $Q = \cQ(\vb g)$ and suppose that $\dim \frac{\R[\vb{X}]}{\supp{Q}} = 0$. Then $S$ is finite, the MoM hierarchy is exact and the flat truncation holds for all $\lambda \in \cL_{2d}^{\min}(f;\vb g)$ when $d$ is big enough. If conditions  (i)-(iii) in \Cref{thm::flat_iff} are satisfied, then flat truncation holds at the relaxation order $d$.
\end{theorem}
\begin{proof}
    {Since $\dim \frac{\R[\vb{X}]}{\supp{Q}} = 0$, we deduce that $S$ is finite and that every positive linear functional satisfies flat truncation from \Cref{thm::fin_set_qua}. Then \Cref{prop:flat truncation} implies that every truncated positive linear functional is represented by a measure, which also implies MoM finite convergence.
    We conclude applying \Cref{thm::flat_iff} and \Cref{thm::flat_implies_exact}.}
\end{proof}

As corollaries, we have that the conclusions of
\Cref{th::finvar_exact_qua} hold:
\begin{itemize}
 \item for the moment hierarchy $(\cL_{2d}(\Pi \vb g))_{d \in \N}$ (recall that we denote $\Pi \vb g$ all the products of the $g_i$'s), if $S=\cS(\vb g)=\cS(\Pi
   \vb g)$ is finite, since by the real Nullstellensatz, 
$$
\dim \frac{\R[\vb X]}{\supp Q(\Pi \vb g)} = \dim \frac{\R[\vb X]}{\supp \cO(\vb g)} =\dim \frac{\R[\vb X]}{\sqrt{\supp \cO(\vb g)}}= \dim \frac{\R[\vb X]}{\cI(\cS(\vb g))} = 0.
$$
See \cite[Th.~4.1]{nie_polynomial_2013} and \cite[Rem.~4.9]{lasserre_semidefinite_2008}. {\Cref{ex:no_convergence_zero} shows that we cannot replace the {zero-dimensionality} hypothesis in \Cref{th::finvar_exact_qua} by the finiteness of $S$ {for a general quadratic modules.}}
 \item for the moment hierarchy $(\cL_d(\vb g, \pm \vb h))_{d \in \N}$ when $\cV_{\R}(\vb h)$ is finite, since
   for $Q=Q(\vb g, \pm \vb h)$,
    \[
        \dim \frac{\R[\vb X]}{\supp Q}=\dim \frac{\R[\vb X]}{\sqrt{\supp Q}}=\dim \frac{\R[\vb X]}{\sqrt[\R]{\supp Q}} \le \dim \frac{\R[\vb X]}{\sqrt[\R]{(\vb h)}}=0.
    \]
    See \cite[Th.~1.1]{nie_polynomial_2013} and \cite{lasserre_semidefinite_2008}. This includes Polynomial Optimization problems with binary variables and equations of the form $X_i^2-X_i=0$, for which {MoM} relaxations are of particular interest, see e.g. \cite{LaurentComparisonSheraliAdamsLovaszSchrijver2003}.
\end{itemize}
{\Cref{ex::supp_non_pre} shows that \Cref{th::finvar_exact_qua} is more general than the two cases above.}

Notice that, even if the SoS hierarchy has the finite
convergence property and the MoM hierarchy is exact, it may not be SoS exact for a finite real variety, as shown in \Cref{ex::nonrad_grad} and \Cref{ex::fin_var}.

\begin{example}[{Gradient ideal, \cite{nie_minimizing_2006}}]
    We compute the minimizers of {\Cref{ex::nonrad_grad}}. Let $f=(X^4Y^2+X^2Y^4+Z^6 -2X^2Y^2Z^2)+X^8+Y^8+Z^8\in \R[X,Y,Z]$. We want to minimize $f$ over the gradient variety $\cV_{\R}\big(\pdv{f}{X},\pdv{f}{Y},\pdv{f}{Z}\big)$ with 
    $\dim \frac {\R[\vb X]} {(\pdv{f}{X},\pdv{f}{Y},\pdv{f}{Z})}=0$. By \Cref{th::finvar_exact_qua}, we deduce that flat truncation holds for an order of relaxation $d$ high enough. In this example, we have $\intdeg=0$, $d=4$, $\deg(f)=8$, $\delta \ge 4$, so that we expect flat truncation at an order $d\ge 4$, from \Cref{thm::flat_iff}.    \begin{verbatim}
        v, M = minimize(f, differentiate(f,X), [], X, 4)
        w, Xi = get_measure(M, 2.e-2)
    \end{verbatim}
    The approximation of the minimum $f^*=0$ is $v=-1.6279\cdot
    10^{-9}$, and the decomposition with a threshold of $2\cdot
    10^{-2}$, used to determine the correct numerical rank of the moment matrix, gives the following numerical approximation of the minimizer (the origin):
    \[
        \xi = (2.976731510689691~10^{-17}; -9.515032317137384~10^{-19}; 3.763401209219283~10^{-18}).
    \]
\end{example}

Generalizations of this approach {have} been investigated to make the {hierarchies}
exact, adding equality constraints satisfied by the minimizers (and
independent of the minimum $f^{*}$) to a Polynomial Optimization
Program.

As we saw in the previous example, for global optimization we can consider the gradient equations (see
\cite{nie_minimizing_2006}): obviously $\grad{f}(x^*)=\vb 0$ for all
the minimizers $x^*$ of $f$ on $S=\R^{n}$. For constrained optimization we can consider
Karush–Kuhn–Tucker (KKT) constraints, adding new variables (see
\cite{demmel_representations_2007}) or projecting them to the
variables $\vb X$ (Jacobian equations, see \cite{nie_exact_2013}).
 These are sufficient but not necessary conditions for $x^* \in S$
 being a minimizer.
To avoid this problem we can define the \emph{augmented Jacobian ideal}, see e.g. \cite{nie_exact_2013}.
 The improvement made from the KKT constraints is to consider conditions that are also necessary for being a minimizer, in the spirit of Fritz John Optimality Conditions (see \cite[Sec.~3.3.5]{BertsekasNonlinearProgramming1999}).
 
In \cite{nie_minimizing_2006},
\cite{demmel_representations_2007} and \cite{nie_exact_2013}, smoothness conditions or
radicality assumptions on the associated complex varieties are made in order to prove
finite convergence and SoS exactness.  In particular, Assumption 2.2 in \cite[Th.~2.3]{nie_exact_2013} requires the varieties defined by the active constraints to be non-singular to conclude finite convergence of the hierarchy. 
Our conditions for finite convergence and flat truncation in \Cref{th::finvar_exact_qua} are of a different nature, since they are on the finiteness of the semialgebraic set. For instance we can apply \Cref{th::finvar_exact_qua} in \Cref{exp:singular}, but Assumption 2.2 in \cite{nie_exact_2013} is not satisfied, since the minimizer is a singular point. Moreover notice that in our theorem we use only the defining inequalities $\vb g$ and not their products $\Pi \vb g$, as done in \cite[Th.~2.3]{nie_exact_2013} (in other words, we consider the quadratic module and not the preordering).

\begin{example}[Singular minimizer]\label{exp:singular}
    We minimize $f = X$ on the compact semialgebraic set
    $S=\cS(X^3-Y^2,1-X^2-Y^2)$. We have $f^*=0$ and the only minimizer is the origin, which
    is a singular point of the boundary of $S$. Thus BHC does not
    hold, and we cannot apply \Cref{thm::BHC}. 
    We have $\dim \frac{\R[\vb X]}{\supp(Q + (X))} = 0$ since $\supp(Q+(X)) \supset (X, Y^2)$, but we cannot apply \Cref{thm::flat_iff}, as we don't have finite convergence of the SoS and MoM hierarchies. Indeed $X \notin Q = \cQ(X^3-Y^2,1-X^2-Y^2)$, since $X \notin \cQ(X^3, 1-X^2)$. This implies that the SoS and MoM hierarchies do not have finite convergence, following \Cref{ex:dim2}.
    This example also shows that we cannot remove the hypothesis of MoM finite convergence in \Cref{thm::flat_iff}.
    
    To get flat truncation, we add the augmented Jacobian equations, that define a finite real variety, as we show in the following. First notice that, since $\cV(X^3-Y^2)$ is singular, Assumption 2.2 in \cite{nie_exact_2013} is not satisfied and the finite convergence of the hierarchy $\cO_{2d}(\vb g, \pm \vb{h'})$ using the augmented Jacobian equations cannot be deduced from \cite[Th.~2.3]{nie_exact_2013}. Generators for augmented Jacobian variety are $\vb{h'}=(1-X^2-Y^2)(X^3-Y^2)$, $Y(1-X^2-Y^2)$, $Y(X^3 - Y^2)$. The real roots are $(-1, 0)$, $(1, 0)$, $(0,0)$ and the two real intersections of $1-X^2-Y^2 = 0$ and $X^3-Y^2 = 0$.
    Therefore $\dim \frac{\R[\vb X]}{\supp(Q+(\vb h'))} \le \dim \frac{\R[\vb X]}{\sqrt[\R]{(\vb{h'})}} = 0$, and \Cref{thm::fin_set_qua} implies flat truncation. We  recover the minimizer considering the MoM relaxation of order $5$:
    \begin{verbatim}
        v, M = polar_minimize(f, [], [x^3-y^2,1-x^2-y^2], X, 5)
        w, Xi = get_measure(M, 2.e-3)
    \end{verbatim}
    The approximation of the minimum $f^*=0$ is $v=-0.0045$, and the
    decomposition of the moment sequence with a threshold of $2\cdot
    10^{-3}$ gives the following approximation of the minimizer (the origin):
    \[
        \xi = (-0.004514367348787526, 2.1341684460860045~ 10^{-21}).
    \]
    The error of approximation on $\xi$ is of the same order
    than the error on the minimum $f^{*}$.
\end{example}

\section{Conclusion}
We investigated the convex cones $\cL_d(\vb g)$ dual to the truncated quadratic modules $\tqgen{g}{d}$ from a new perspective. We studied the kernels of moment matrices or annihilators of moment sequences in these cones and characterize the ideal they generate (\Cref{thm::generic_t}). We focused on the zero-dimensional case and its relationships with the flat truncation property (\Cref{prop:flat truncation} and \Cref{thm::fin_set_qua}), that can be used to certify that a linear functional is {represented by} a measure.

The main contributions of the paper are the applications of the previous analysis to flat truncation in Lasserre's MoM hierarchies for Polynomial Optimization. We studied the flat truncation property in this context (\Cref{thm::flat_implies_exact}) and deduced new necessary and sufficient conditions for flat truncation (\Cref{thm::flat_iff}). These conditions can be used to show that, under regularity and thus genericity assumptions (Boundary Hessian Conditions), the flat truncation property is satisfied (\Cref{thm::BHC}, \Cref{cor::generic_exactness}). We applied these results to Polynomial Optimization on finite sets (\Cref{th::finvar_exact_qua}), generalizing and giving a unified presentation to different results in the literature.

\Cref{thm::flat_iff} provides the first known degree bounds for the flat truncation property to hold, in terms of the inequalities $\vb g$ and the objective function $f$ (in particular depending on the interpolation degree of the minimizers). An interesting question would be to investigate if it is possible to improve these degree bounds. Another possible research direction is to investigate regularity conditions, simpler than Boundary Hessian Conditions, that imply flat truncation for MoM hierarchy of a certain order $d$.

\textbf{Acknowledgments}. The authors thank D. Henrion and M. Laurent for useful discussions about the outer approximation of semialgebraic sets with moments of degree one and flat truncation properties, and the anonymous referees for their helpful suggestions for the improvement of the presentation of the paper. 

This work has been partially supported by European Union’s Horizon 2020 research and innovation programme under the Marie Skłodowska-Curie Actions, grant agreement 813211 (POEMA).

\appendix
\section{Appendix: Examples of exact and non-exact Lasserre's hierarchies}
\label{appendix}

In this appendix, we give examples showing how the notions of finite convergence and exactness of the SoS and MoM hierarchies are (and are not) related.

\paragraph{No finite convergence.}
We start presenting the first example of optimization over a finite semialgebraic set, where we do not have finite convergence of the MoM and SoS hierarchies.

\begin{example}[{see also \cite[Ex. 3.2]{scheiderer_distinguished_2005}, \cite[Rem. 3.15]{scheiderer_non-existence_2005}, \Cref{ex::scheiderer}, \Cref{ex::finite_not_exact}}]\label{ex:no_convergence_zero}
    {Consider the minimization of $f = Y-X $ on the origin $S = \cS(\vb g) =\{ \vb 0 \} \subset \R^2$, where $Q = \cQ(\vb g )=\cQ(1-X^2-Y^2, -XY, X-Y, Y-X^2)\subset \R[X,Y]$. In this case $\supp Q= \sqrt[\R]{\supp Q} = (0)$, and thus from \Cref{thm::strong_duality} there is no duality gap and $f-f^*_{\sos, d} \in \cQ_{2d}(\vb g)$ for all $d$. Then, if there is SoS finite convergence, $Y-X = f-f^* \in \cQ(\vb g)$. Since $X-Y \in \cQ(\vb g)$, finite convergence would imply that $X-Y \in \cQ(\vb g) \cap - \cQ(\vb g) = \supp \cQ(\vb g) = (0)$, a contradiction. Therefore there is no SoS finite convergence, and by strong duality there is no MoM finite convergence as well. This example shows that \cite[Th.~4.1]{nie_polynomial_2013} cannot be extended from preorderings to quadratic modules in general, but only when $\supp Q$ is a zero-dimensional ideal (see \Cref{th::finvar_exact_qua}).}
\end{example}

The next example shows that SoS and MoM hierarchies for polynomial
optimization on algebraic curves do {not necessarily have} the finite
convergence property. {For the definition of graded basis, see the paragraph before \Cref{prop:radical closed}.}
\begin{example}[\cite{scheiderer_sums_2000}]\label{ex:curvegenus1}
Let $\cC\subset \R^{n}$ be a smooth connected curve of genus $\ge 1$, with only
real points at infinity (e.g. the plane cubic defined by $Y^2 = X^3 - X$). Let $\vb h=h_{1},\ldots, h_{s}\subset \R[\vb{X}]$ be a graded basis of
$I=\cI(\cC)$ . Then there exists $f\in \R[\vb{X}]$ such
that the SoS hierarchy $\cQ_{2d}(\pm \vb{h})$ and the MoM hierarchy $\cL_{2d}(\pm \vb{h})$ have no finite convergence and
are not exact.

Indeed by \cite[Th.~3.2]{scheiderer_sums_2000}, there exists $f\in \R[\vb{X}]$ such that $f\ge 0$ on $\cC=\cS(\pm\vb{h})$, which is not a sum of squares in $\R[\cC]=\R[\vb{X}]/I$.
Consequently, $f\not \in \Sigma^{2}[\vb{X}]+I = \cQ(\pm \vb{h})$. As $f\ge 0$ on
$\cC$, its infimum $f^{*}$ is non-negative and we also have
$f-f^{*}\not \in \cQ(\pm \vb{h})$.

Using \Cref{prop:radical closed} we
deduce that $\cQ_{2d}(\pm \vb{h})$ is closed, that there is no duality gap and that the supremum
$f^{*}_{\sos,d}$ is reached.
Thus if the MoM hierarchy has finite convergence then the SoS hierarchy {also has} finite convergence and $f-f^{*} \in
\cQ_{2d}(\pm{\vb{h}})$ for some $d\in \N$. But this is in contradiction with the previous paragraph, showing that the SoS and the MoM hierarchies have no finite convergence.
\end{example}

In dimension $2$, there are also cases where the SoS and MoM hierarchies
cannot have finite convergence or be exact.
\begin{example}[\cite{marshall_positive_2008}]\label{ex:dim2} Let $g_{1}=X_{1}^{3}-X_{2}^{2},
  g_{2}=1-X_{1}$. Then $S=\cS(\vb{g})$ is a compact semialgebraic set of
  dimension $2$ and $\cO(\vb{g})$ is Archimedean.
  We have $f=X_{1}\ge 0$ on $S$ but $X_{1}\not \in \cO(\vb{g})$ (see
  \cite[Example 9.4.6(3)]{marshall_positive_2008}). The infimum of $f$
  on $S$ is $f^{*}=0$.
  Assume that we have MoM finite convergence. Using for instance \Cref{prop:radical closed}, $\cQ_{2d}(\Pi
  \vb{g})$ is closed, the supremum $f^{*}_{\sos,d}$ is reached and strong duality holds: $f^{*}_{\sos,d}=f^{*}_{\mom,d}=f^{*}=0$. Then $f-f^{*}=f\in \cO(\vb{g})$: but this
  is a contradiction. Therefore, the hierarchies $\cQ_{2d}(\Pi\vb{g}) = \cQ_{2d}(g_1, g_2 , g_1 g_2)$
  and $\cL_{2d}(\Pi\vb{g}) = \cL_{2d}(g_1, g_2 , g_1 g_2)$ cannot have finite convergence and
thus cannot be exact for $f=X_{1}$.
\end{example}
The next example shows that non-finite convergence and non-exactnesss
is always possible for semialgebraic sets of dimension $\ge 3$.
\begin{example} \label{prop::no_exactness}\label{ex:dim3}
    Let $n\ge 3$. Let $Q(\vb g)$ be an Archimedean quadratic module generated
    by $g_{1}, \ldots, g_{s}\in \R[\vb{X}]$ such that $\cS(\vb g)\subset
    \R^n$ is of dimension $m \ge 3$ and let $\vb h$ be a graded basis of $\sqrt[\R]{\supp \cQ(\vb g)}$
    (in particular $\vb h = 0$ if $m=n$, i.e. $\cS(\vb g)$ is of maximal
    dimension){. Then} there exists $f \in \R[\vb{X}]$ such that the SoS
    hierarchy $(\cQ_{2d}(\vb{g}, \pm \vb h))_{d\in \N}$ and MoM hierarchy $(\cL_{2d}(\vb{g}, \pm \vb h))_{d\in \N}$
    do not have the finite convergence property (and thus are not exact).

Indeed by \Cref{prop:radical closed} $f^*_{\sos,d}=f^*_{\mom,d}$ for $d$ big
enough and the supremum $f^*_{\sos,d}$ is reached. By \cite[Prop.~6.1]{scheiderer_sums_2000} for $m \ge 3$, $\pos(\cS(\vb g)) \supsetneq \cQ(\vb g) + (\vb h)$. So let $f \in \pos(\cS(\vb g))\setminus \cQ(\vb g)+(\vb h)$ and let $f^*$ be its minimum on $\cS(Q)$. Suppose that $f-f^*\in \cQ(\vb g)+ (\vb h)$, then $f \in \cQ(\vb g) + (\vb h)+f^*=\cQ(\vb g)+(\vb h)$, a contradiction. Then the SoS and the MoM hierarchy do not have the finite convergence property (and they are not exact).
\end{example}

\paragraph{SoS exactness, no MoM exactness.}

\begin{example}\label{ex:cylinder}
We consider in this example the unconstrained case.
    We want to find the global minimum of $f=X_{1}^2\in \R[X_{1},\ldots,X_{n}]=\R[\vb{X}]$
    for $n\ge 3$. Let $d\ge 2$, $\vb{X}'=(X_{2}, \ldots, X_{n})$  and $\overline{\lambda} \in
    \Sigma^{2}[\vb{X}']^\vee$ such that $\overline{\lambda} \not \in \cM(\R^{n-1})^{[d]}$.
    Such a linear functional exists because when $n>2$ there are non-negative
    polynomials in $\R[\vb{X}']$ which are not sum of  squares, such as the Motzkin polynomial (see \cite{reznick_concrete_1996}).
As $\Sigma^{2}[\vb{X}']$ is closed, such a
polynomial can be separated from
$\Sigma^{2}[\vb{X}']$ by a linear functional
$\overline{\lambda} \in \Sigma^{2}[\vb{X}']^\vee$, which cannot be the
truncation of a measure.
    Define $\lambda : h\mapsto \braket{\lambda}{h}=
    \braket{\overline{\lambda}}{h(0,X_{2},\ldots,X_{n})}$. We have $\lambda\in
    \Sigma^{2}[\vb{X}]^\vee$ since $\overline{\lambda} \in
    \Sigma^{2}[\vb{X}']^\vee$. Obviously
    $\braket{\lambda}{f}= 0= f^{*}$ (the minimum of
    $X_{1}^2$), $f-f^{*}=X_{1}^{2}\in \Sigma^{2}$ and the SoS hierarchy is exact. Since $\lambda$ is
    {represented by} a measure if and only if $\overline{\lambda}$ is {represented by} a measure, the MoM hierarchy cannot be exact.
\end{example}

\Cref{ex:cylinder} is an example where the number of minimizers of $f$ on $S$ is infinite. We show that non exactness can happen also when the minimizers are finite (and even when $S$ is finite!).

\begin{example}[{see also \cite[Ex. 3.2]{scheiderer_distinguished_2005}, \cite[Rem. 3.15]{scheiderer_non-existence_2005}, \Cref{ex::scheiderer}, \Cref{ex:no_convergence_zero}}]
    \label{ex::finite_not_exact}
        We want to minimize the constant function $f = 1$ on the origin $S=\cS(\vb g)=\{\vb 0\}$, where $Q = \cQ(\vb g )=\cQ(1-X^2-Y^2, -XY, X-Y, Y-X^2)\subset \R[X,Y]$. In this case $\supp Q= \sqrt[\R]{\supp Q} = (0)$. Notice that the SoS hierarchy is exact and the MoM hierarchy has finite convergence, since $f \in  \cQ_2(\vb g)$. Now suppose that the MoM hierarchy is exact, i.e. $\cL_{2d}^{\min}(f;\vb g)^{[2k]} = \cLone_{2d}(\vb g)^{[2k]} \subset \cMone(S)^{[2k]} = \{ \eval_{\vb 0}^{[2k]} \}$. Then for $\lambda^* \in \cL_{2d}(\vb g)$ we have $(\ann_k(\lambda^*)) = (\ann_k(\eval_{\vb 0})) = (X, Y)$. But from \Cref{thm::generic_t} we know that for $d, k$ big enough $(\ann_k(\lambda^*)) = \sqrt[\R]{\supp Q} = (0)$, a contradiction. Then the MoM hierarchy is not exact. Moreover the flat truncation property is not satisfied in this case: see \Cref{thm::flat_iff}.
         
        We investigate concretely this example for $d=1$. We show in \Cref{fig:1}\footnote{the variables $X, Y$ in the plots, done using \texttt{SDPA}, have been scaled by $100$ to reduce floating points errors} the plot of $\cL_2(\vb g)^{[1]}$, that is the pseudo-moments of degree one of the linear functionals in {the} dual cone of $\tqgen{g}{2}$. Notice that this is an outer approximation of $\eval_{(0,0)} \in \cL_2(\vb g)^{[1]}$ or, identifying moments of degree one with points of $\R^n$, a convex outer approximation of $S = \{ (0,0)\}$.

        One can also verify explicitly that $\cL_2(\vb g)$ has nonempty interior, as for instance $\lambda  = \lambda(\epsilon)$ defined by $\lambda_{10} = 2\epsilon$, $\lambda_{01} = \epsilon$, $\lambda_{20} = \frac{\epsilon}{2}$, $\lambda_{11} = - \epsilon^2$ and $\lambda_{02} = \frac{1}{2}$ lies in the interior of $\cL_{2}(\vb g)$ for $\epsilon > 0$ small enough.

        Notice that $\cL_2(\vb g)^{[1]} \supset \cL_3(\vb g)^{[1]} \supset \cL_4(\vb g)^{[1]} \supset \dots \supset \{ \eval_{(0,0)}^{[1]} \}$, and we have convergence since $\cQ(\vb g)$ is Archimedean. This nested outer approximations never coincide with $\{ \eval_{(0,0)}^{[1]} \}$, as we have proven before.
        \begin{figure}[h!]%
            \centering
            {{\includegraphics[width=0.49\textwidth]{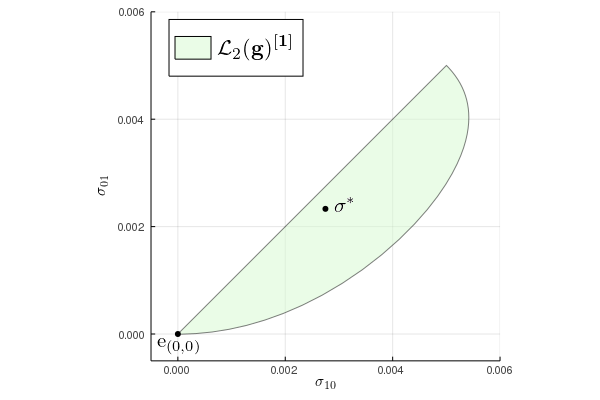} }}%
            \
            {{\includegraphics[width=0.49\textwidth]{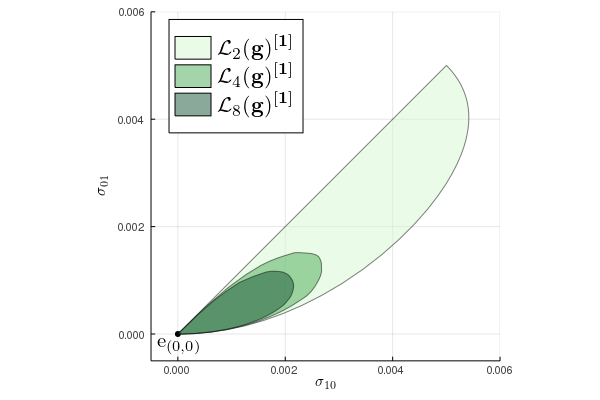} }}%
            \caption{A generic point $\lambda^*\in \cLone_2(\vb g)^{[1]}$ and moment outer approximations of $\cLone(\vb g)^{[1]} = \{\eval_{0,0}^{[1]}\}.$}%
            \label{fig:1}%
        \end{figure}
\end{example}

\paragraph{SoS finite convergence, MoM exactness.}
\begin{example}
\label{ex::nonrad_grad}
      Let $f=(X^4Y^2+X^2Y^4+Z^6 -2X^2Y^2Z^2)+X^8+Y^8+Z^8\in \R[X,Y,Z]$. We want to optimize $f$ over the gradient variety $\cV_{\R}\big(\pdv{f}{X},\pdv{f}{Y},\pdv{f}{Z}\big)$ which is {zero-dimensional} (see \cite{nie_minimizing_2006}).
      By \Cref{th::finvar_exact_qua} the flat truncation is satisfied and the MoM hierarchy is exact, and by \Cref{thm::strong_duality} and remark below the SoS has the finite convergence property (notice that $\cQ(\pm \pdv{f}{X},\pm \pdv{f}{Y},\pm \pdv{f}{Z})=\cO(\pm \pdv{f}{X},\pm \pdv{f}{Y},\pm \pdv{f}{Z})$ is Archimedean since $\cV_{\R}\big(\pdv{f}{X},\pdv{f}{Y},\pdv{f}{Z}\big)$ is compact).
      But the SoS hierarchy is not exact, as shown in \cite{nie_minimizing_2006}. 
\end{example}

\begin{example}
\label{ex::fin_var}
      Let $f=X_1$. We want to find its value at the origin, defined by $\norm{\vb{X}}^2=0$.
      As proved in \cite{nie_polynomial_2013} there is finite convergence but not exactness for the SoS hierarchy. On the other hand by \Cref{th::finvar_exact_qua} the flat truncation property is satisfied and the MoM hierarchy is exact.
\end{example}
\printbibliography
\end{document}